\newtheorem{theorem}{Theorem}
\newtheorem{lemma}[theorem]{Lemma}
\newtheorem{proposition}[theorem]{Proposition}
\newtheorem{corollary}[theorem]{Corollary}
\theoremstyle{definition}
\newtheorem{definition}[theorem]{Definition}
\newtheorem{example}[theorem]{Example}
\theoremstyle{remark}
\newtheorem{remark}[theorem]{Remark}
\numberwithin{equation}{section}
\numberwithin{theorem}{section}
\newcommand\thref{Theorem \ref}
\newcommand\leref{Lemma \ref}
\newcommand\prref{Proposition \ref}
\newcommand\coref{Corollary \ref}
\newcommand\deref{Definition \ref}
\newcommand\exref{Example \ref}
\newcommand\reref{Remark \ref}
\newcommand\seref{Section \ref}
\renewcommand{\comment}[1]{}
\def\CC{\mathbb{C}}
\def\cR{\mathcal{R}}
\def\S{\mathscr{S}}
\def\N{\mathscr{S}}
\def\V{\mathcal{V}}
\def\A{\mathcal{A}}
\def\C{\mathbb{C}}
\def\S{\mathscr{S}}
\def\T{\mathcal{T}}
\def\U{\mathcal{U}}
\def\ZZ{\mathbb{Z}}
\DeclareMathOperator\End{End}
\DeclareMathOperator\Hom{Hom}
\DeclareMathOperator\ad{ad}
\DeclareMathOperator\gr{gr}
\DeclareMathOperator\Span{span}
\DeclareMathOperator\LF{LFie}
\DeclareMathOperator\F{F}
\def\vac{{\boldsymbol{1}}}  
\def\al{\alpha}
\def\be{\beta}
\def\la{\lambda}
\def\ze{\zeta}
\def\z{z}
\def\d{\partial}
\def\lieg{{\mathfrak{g}}}
\begin{document}

\title[Log Vertex Algebras and Non-local Poisson Vertex Algebras]{Logarithmic Vertex Algebras and Non-local Poisson Vertex Algebras}

\author{Bojko Bakalov}
\address{Department of Mathematics,
North Carolina State University,
Raleigh, NC 27695, United States}
\email{bojko\_bakalov@ncsu.edu}

\author{Juan J. Villarreal}
\address{Department of Mathematical Sciences, 
University of Bath, 
Bath BA2 7AY,
United Kingdom}
\email{juanjos3villarreal@gmail.com}


\dedicatory{Dedicated to Victor G.\ Kac on his 80th birthday}

\date{October 27, 2022; Revised August 20, 2023}


\subjclass[2010]{Primary 17B69; Secondary 17B63, 81R10, 81T40}

\begin{abstract}
Logarithmic vertex algebras were introduced in our previous paper, motivated by logarithmic conformal field theory \cite{BV}. Non-local Poisson vertex algebras were introduced by De Sole and Kac, motivated by the theory of integrable systems \cite{DK}. We prove that the associated graded vector space of any filtered logarithmic vertex algebra has an induced structure of a non-local Poisson vertex algebra. We use this relation to obtain new examples of both logarithmic vertex algebras and non-local Poisson vertex algebras.
\end{abstract}

\maketitle

\tableofcontents

\section{Introduction}\label{sec1}

We start by reviewing the well-known fact that the semi-classical limit of an associative algebra is a Poisson algebra.
Suppose that $A$ is an associative algebra over $\CC$ with an increasing \emph{filtration} by subspaces
\[
\{0\}=\cdots=\F^{-1}\! A\subset \F^{0}\! A\subset \F^{1}\! A\subset \F^{2}\! A\subset\cdots\subset A \,, 
\]
which is compatible with the product in $A$, so that
\[ 
ab \in\F^{m+k}\! A \quad\text{ for }\quad a\in\F^{m}\! A \,, \;\; b\in\F^{k}\! A \,.
\]
Then the \emph{associated graded} $\gr A$, defined by 
\[
\gr A:=\bigoplus_{k=0}^\infty \gr^{k}\! A\, , \qquad \gr^{k}\! A:=\F^{k}\! A/\F^{k-1}\! A\,, 
\]
inherits from $A$ an associative product of degree $0$. Explicitly, the product in $\gr A$ is given by
\[ 
\bar{a}\bar{b} := ab + \F^{m+k-1}\! A \in \gr^{m+k}\! A
\]
for
\[ 
\bar{a} = a+\F^{m-1}\! A \in \gr^{m}\! A \,, \qquad
\bar{b} = b + \F^{k-1}\! A \in \gr^{k}\! A \,,
\]
and it does not depend on the choice of representatives $a\in\F^{m}\! A$ and $b\in\F^{k}\! A$.
Assume now that the product in $\gr A$ is commutative, i.e.,
\[ 
ab-ba \in\F^{m+k-1}\! A \quad\text{ for }\quad a\in\F^{m}\! A \,, \;\; b\in\F^{k}\! A \,.
\]
Then we can also define a Lie bracket of degree $-1$ on $\gr A$ by
\[ 
[\bar{a},\bar{b}] := ab-ba +\F^{m+k-2}\! A \in\gr^{m+k-1}\! A \,,
\]
which together with the commutative associative product makes $\gr A$ a \emph{Poisson algebra}.
This situation happens, for example, when $A=U(L)$ is the universal enveloping algebra of a Lie algebra $L$.
Then $A$ has a canonical filtration such that $\F^{0}\! A = \CC$, $\F^{1}\! A=L\oplus\CC$, and the above properties hold.
The corresponding Poisson algebra $\gr A$ is isomorphic to the symmetric algebra $S(L)$,
by the Poincar\'e--Birkhoff--Witt Theorem.

The above well-known results can be generalized to vertex algebras. 
Recall first that a \emph{vertex algebra} \cite{Bo} is a vector
space $V$ equipped with infinitely many bilinear products
\begin{equation}\label{e1.1}
\mu_{(n)} \colon V \otimes V \to V \,, \qquad n\in\ZZ\,,
\end{equation}
subject to certain axioms, the main one being the Borcherds identity (see \cite{FB,FLM,K1,LL}).
It is customary to denote the $n$-th product of $a,b\in V$ as $a_{(n)}b = \mu_{(n)}(a\otimes b)$,
and one of the axioms states that for every fixed $a,b$ we have $a_{(n)}b =0$ for sufficiently large $n$.
A special case of the Borcherds identity is the commutator formula ($a,b,c\in V$, $m,k\in\ZZ$):
\begin{equation}\label{e1.2}
a_{(m)} (b_{(k)} c) - b_{(k)} (a_{(m)}c) = \sum_{j=0}^\infty \binom{m}{j} (a_{(j)}b)_{(m+k-j)} c \,,
\end{equation}
with the sum over $j$ in fact finite.
The commutator formula implies in particular that the linear operators $a_{(m)} \in\End(V)$ span a Lie algebra.
Important examples of vertex algebras can be constructed from representations of infinite-dimensional Lie algebras
such as affine Kac--Moody algebras and the Virasoro algebra (see \cite{K1,KRR}).
Vertex algebras provide a rigorous algebraic approach to $2$-dimensional conformal field theory (see e.g.\ \cite{DMS}).

If we restrict to $m,k\ge 0$ in \eqref{e1.2}, we get an identity satisfied by the products $\mu_{(n)}$ for $n\ge0$. 
It is convenient to organize these products in a generating function
\begin{equation}\label{e1.3}
[a_\la b] := \sum_{n=0}^\infty \frac{\la^n}{n!} \, \mu_{(n)}(a\otimes b) \,,
\end{equation}
called the \emph{$\la$-bracket}, where $\la$ is a formal variable. Note that $[a_\la b] \in V[\la]$ is a polynomial in $\la$.
Then the collection of identities \eqref{e1.2} for all $m,k\ge 0$ is equivalent to the Jacobi identity
\[
[a_\la [b_\mu c]] - [b_\mu [a_\la c]] = [[a_\la b]_{\la+\mu} c] 
\]
for formal variables $\la,\mu$. Together with a few other axioms, this gives rise to the notion of a \emph{Lie conformal algebra}
\cite{K1} as a vector space $R$ equipped with a $\la$-bracket $R \otimes R \to R[\la]$.
From a Lie conformal algebra $R$, one can construct a universal vertex algebra $V(R)$, which is reminiscent of the universal
enveloping algebra of a Lie algebra (see \cite{K1,BK}).

A \emph{Poisson vertex algebra} \cite{FB,Li} is defined as a Lie conformal algebra that also has a commutative associative product
satisfying the Leibniz rule
\[
[a_\la (bc)] = [a_\la b]c + b[a_\la c] \,.
\]
Poisson vertex algebras have important applications to integrable systems; see \cite{K2} for an introduction. 
Examples of Poisson vertex algebras can be constructed as the symmetric algebras $S(R)$ of Lie conformal algebras $R$.

A vertex algebra $V$ is called \emph{filtered} \cite{Li} if it has an increasing filtration by subspaces
\[
\{0\}=\cdots=\F^{-1}V\subset \F^{0}V\subset \F^{1}V\subset \F^{2}V\subset\cdots\subset V \,, 
\]
such that
\begin{equation}\label{e1.4}
\mu_{(n)}(\F^{m}V\otimes \F^{k}V)\subset 
                \begin{cases}
                  \F^{m+k}V\, ,  \; & n<0 \,, \\
                  \F^{m+k-1}V\, , \;  &  n\geq 0\, .
                \end{cases}
\end{equation}
It was proved in \cite{Li} (and in a different form in \cite{FB}) that the associated graded $\gr V$
has a natural structure of a Poisson vertex algebra defined as follows.
The commutative associative product in $\gr V$ is induced from the product $\mu_{(-1)}$:
\begin{equation}\label{e1.5}
\begin{split}
&\bar{a}\bar{b} := \mu_{(-1)}(a \otimes b) + \F^{m+k-1}V \in \gr^{m+k} V \,, \\
&\quad\text{for}\quad \bar{a} = a+\F^{m-1}V \in \gr^{m} V \,, \quad
\bar{b} = b + \F^{k-1}V \in \gr^{k} V \,,
\end{split}
\end{equation}
while the
$\la$-bracket in $\gr V$ is induced from the $\la$-bracket \eqref{e1.3}:
\[ 
[\bar{a}_\la\bar{b}] := [a_\la b] + \F^{m+k-2}V \in (\gr^{m+k-1} V)[\la] \,.
\]
In particular, for the universal vertex algebra $V(R)$ of a Lie conformal algebra $R$,
one has $\gr V(R) \cong S(R)$ (see e.g.\ \cite[Section 2.5]{BDSK}).

Recently, we introduced a generalization of the notion of a vertex algebra, called a \emph{logarithmic vertex algebra} \cite{BV}.
The main motivation behind this notion is logarithmic conformal field theory \cite{G1, G2, CR}, and logarithmic vertex algebras provide a rigorous algebraic formulation of quantum fields with logarithmic singularities in their operator product expansion. Logarithmic vertex algebras are also motivated by twisted logarithmic modules \cite{B,H} (see also \cite{BS1, BS2}). 
Besides bilinear products $\mu_{(n)}$ as in \eqref{e1.1}, a logarithmic vertex algebra $V$ is endowed with a linear map
\[
\N\colon V \otimes V \to V \otimes V \,,
\]
called the \emph{braiding map}; see \cite{BV} and Section \ref{sec2.3} below.
The main axiom satisfied by the products $\mu_{(n)}$ is the Borcherds identity \eqref{borcherds2},
which involves the braiding map $\N$ and reduces to the Borcherds identity for ordinary vertex algebras when $\N=0$.
In particular, a logarithmic vertex algebra with a zero braiding map is the same as an ordinary vertex algebra.

In \cite{DK}, the notion of a Lie conformal algebra was generalized to that of a \emph{non-local Lie conformal algebra} $R$,
in which the $\la$-bracket is no longer a polynomial in $\la$ but is a Laurent series in $\la^{-1}$.
Denoting this more general $\la$-bracket as $\{a_\la b\}$, we have
\[
\{a_\la b\} \in R(\!(\lambda^{-1})\!) := R[[\lambda^{-1}]][\la] \,, \qquad a,b\in R\,.
\]
There are subtleties in defining the compositions of brackets in the Jacobi identity
\[
\{a_\la \{b_\mu c\}\} - \{b_\mu \{a_\la c\}\} = \{\{a_\la b\}_{\la+\mu} c\} \,,
\]
which are discussed in \cite{DK} and in Section \ref{sec2.2} below.
Then a \emph{non-local Poisson vertex algebra} \cite{DK} is defined as a non-local Lie conformal algebra with an additional commutative associative product
satisfying the Leibniz rule
\[
\{a_\la (bc)\} = \{a_\la b\}c + b\{a_\la c\} \,.
\]
Non-local Poisson vertex algebras have applications to integrable systems, and several interesting examples were presented in \cite{DK}; see also \cite{DKV,DKV2, DKVW} for further developments.

In this paper, we introduce the notion of a \emph{filtered} logarithmic vertex algebra $V$, by requiring \eqref{e1.4} and
\begin{equation*}
\N(\F^{m} V\otimes \F^{k} V)\subset \F^{m+k-1} (V\otimes V) := \bigoplus_{j=0}^{m+k-1}\F^{j} V\otimes \F^{m+k-1-j}V \,.
\end{equation*}
Then we define a $\la$-bracket on $V$ by
\begin{equation}\label{e1.7}
\{a_{\lambda}b\}:=[a_{\lambda}b]
+\sum_{n=0}^\infty (-1)^{n}n!\, \lambda^{-n-1} \, \mu_{(-n-1)}\bigl(\N(a\otimes b)\bigr),
\end{equation}
where $[a_{\lambda}b]$ is given as before by \eqref{e1.3}.
The associated graded $\gr V$ is equipped with the product defined by \eqref{e1.5} and by the induced $\la$-bracket
\begin{equation}\label{e1.8}
\{\bar{a}_\la\bar{b}\} := \{a_\la b\} + \F^{m+k-2}V \in (\gr^{m+k-1} V)(\!(\lambda^{-1})\!) \,.
\end{equation}

The following is the main result of the paper.

\begin{theorem}\label{thm1.1}
For a filtered logarithmic vertex algebra\/ $V$, the associated graded\/ $\gr {V}$ has an induced structure of a non-local Poisson vertex algebra,
defined by \eqref{e1.5} and \eqref{e1.8}.
\end{theorem}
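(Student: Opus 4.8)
The plan is to show that the three defining axioms of a non-local Poisson vertex algebra---namely, that $\gr V$ is a non-local Lie conformal algebra under the bracket \eqref{e1.8}, that the product \eqref{e1.5} is commutative and associative, and that the Leibniz rule holds---each descend from corresponding (weaker, filtered) statements in $V$. The commutativity and associativity of the product on $\gr V$ is exactly the ordinary-vertex-algebra argument of \cite{Li,FB}: the braiding map enters the Borcherds identity only through terms involving $\mu_{(-n-1)}$ for $n\ge 0$ applied after $\N$, and since $\N$ drops filtration degree by one, those corrections land in $\F^{m+k-1}V$ and vanish in $\gr^{m+k}V$; hence $\mu_{(-1)}$ induces on $\gr V$ the same commutative associative product as in the non-logarithmic case. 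So the first thing I would do is isolate, from the logarithmic Borcherds identity \eqref{borcherds2}, the three specializations I need---skew-symmetry, the Jacobi/commutator-type identity, and the "non-commutative Wick / Leibniz" identity---and check the filtration degree of each term, separating the $\N$-free part (which behaves as in \cite{Li}) from the $\N$-correction part.

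Second, I would verify that the bracket \eqref{e1.7} is well-defined, i.e.\ that $\{a_\la b\}\in V(\!(\la^{-1})\!)$: the polynomial part $[a_\la b]$ is a finite sum by the vertex-algebra axiom, and the negative-power tail $\sum_n (-1)^n n!\,\la^{-n-1}\mu_{(-n-1)}(\N(a\otimes b))$ is a genuine Laurent series in $\la^{-1}$ since the products $\mu_{(-n-1)}$ are defined for all $n\ge 0$; no finiteness is needed there. Then, passing to $\gr V$, the formula \eqref{e1.8} is well-defined modulo $\F^{m+k-2}V$ precisely because of the filtration hypotheses: \eqref{e1.4} puts $[a_\la b]$ in $\F^{m+k-1}V$, and the $\N$-term lies in $\F^{m+k-1}V$ as well because $\mu_{(-n-1)}(\F^p V\otimes \F^q V)\subset \F^{p+q}V$ combined with $\N(\F^mV\otimes \F^kV)\subset \F^{m+k-1}(V\otimes V)$. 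Changing representatives of $\bar a,\bar b$ changes $\{a_\la b\}$ by an element of $\F^{m+k-2}V(\!(\la^{-1})\!)$, so the induced bracket is unambiguous; I would spell this out once carefully, as it is the place where the new filtration axiom on $\N$ is used.

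Third, and this is the technical heart, I would derive the non-local Jacobi identity for $\{\,\cdot_\la\,\cdot\,\}$ on $\gr V$ from the logarithmic Borcherds identity. The idea mirrors the classical passage from the commutator formula \eqref{e1.2} to the Jacobi identity for the $\la$-bracket, but now I must also track the $\mu_{(-n-1)}\circ\N$ contributions, which produce the $\la^{-1}$-tail and are responsible for the non-locality; their compositions are governed by the naturality/coherence properties of $\N$ recorded in \cite{BV} and Section \ref{sec2.3}. The main subtlety---exactly the one \cite{DK} flags---is making sense of the iterated brackets $\{a_\la\{b_\mu c\}\}$ and $\{\{a_\la b\}_{\la+\mu}c\}$ as elements of the appropriate space of Laurent-type expansions in two variables; I would handle this by working at the level of the filtration, where at each graded piece only finitely many correction terms survive (since each application of $\mu_{(n)}$ with $n\ge 0$ or of $\N$ strictly lowers degree), so that modulo $\F^{m+k+l-3}V$ all the infinite sums truncate and the formal manipulations are legitimate. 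I expect the Leibniz rule to be comparatively easy: it follows by applying the "non-commutative Wick formula" specialization of \eqref{borcherds2}---whose quantum-correction terms again drop a filtration degree---and reducing modulo the appropriate $\F$. So the anticipated obstacle is purely the bookkeeping in the Jacobi identity: correctly matching each family of terms in the expanded logarithmic Borcherds identity (including the $\binom{m}{j}$-type and the $\log$-derivative/$\N$-type families) against the three terms of the non-local Jacobi identity, and confirming that everything not accounted for lies in the next filtration level down.
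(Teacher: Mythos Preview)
Your outline is on the right track for the easier pieces: the commutative associative product on $\gr V$, sesqui-linearity, skew-symmetry, and the Leibniz rule all go through essentially as you describe, by isolating the relevant specializations of the Borcherds identity \eqref{borcherds2} and observing that every $\N$-correction drops a filtration degree. This matches the paper's Propositions \ref{pro3.5}, \ref{propbrack}, and \ref{pro3.8}.

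There are two genuine gaps. First, you never check \emph{admissibility} of the bracket (Definition \ref{admisible}). For a non-local LCA this is an axiom, not a consequence: one must show $\{a_\lambda\{b_\mu c\}\}\in\iota_{\mu,\lambda}\V_{\lambda,\mu}$ before the Jacobi identity even makes sense. The paper does this separately (Proposition \ref{propadm}), and the argument relies on rewriting the bracket in the closed form
\[
\{a_\lambda b\}=[a_\lambda b]+\sum_i(-1)^{p_{i,a}}\Bigl(\frac{1}{\lambda+\partial}\phi_ia\Bigr)(\psi_ib),
\]
which you never isolate. This reformulation is what makes the $\lambda^{-1}$-tail tractable: it replaces the infinite family $\mu_{(-n-1)}\circ\N$ by a single $(\lambda+\partial)^{-1}$ acting on a product.

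Second, your proposed mechanism for the Jacobi identity---``modulo $\F^{m+k+l-3}V$ all the infinite sums truncate''---is not correct as stated and is not how the paper proceeds. The infinite sums in question are sums over powers of $\lambda^{-1}$ and $\mu^{-1}$, coming from $\mu_{(-n-1)}$; every term in such a sum sits in the \emph{same} filtration degree (namely $m+k+l-2$), so passing to $\gr V$ does not truncate them. What the paper does instead is: using the closed form above, it expands each of the three double brackets and decomposes them according to a direct sum $\V_1\oplus\cdots\oplus\V_7\subset\V_{\lambda,\mu}$, where the $\V_k$ are distinguished by which of $\lambda^{-1},\mu^{-1},(\lambda+\mu)^{-1}$ appear (Lemmas \ref{lemproof1}--\ref{lemproof3}). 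The Jacobi identity is then verified projection by projection. Only the polynomial piece $\V_1$ is handled via the Borcherds identity (Lemma \ref{lemproof4}); the remaining six pieces (Lemmas \ref{lemproof5}--\ref{lemproof6}) are proved using the algebraic properties of the braiding map from Proposition \ref{co3}---that the $\phi_i,\psi_i$ are derivations of all $(n+\N)$-products, pairwise commute, and that $\N$ is symmetric. Your sketch does not invoke these properties at all, yet they carry most of the weight in the non-polynomial sectors.
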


We refer to Theorem \ref{thm3.3} below for a more precise statement. In the rest of the paper, we work in the setting of superalgebras, since there are important examples
of logarithmic vertex superalgebras.
We use the relation between logarithmic vertex algebras and non-local Poisson vertex algebras to construct new examples of both non-local Poisson vertex algebras and logarithmic vertex algebras.

As one new example, we build a logarithmic vertex algebra associated to the so-called \emph{potential Virasoro--Magri} non-local Poisson vertex algebra \cite{DK}.
From it we obtain, in particular, the following non-linear extension of the Virasoro Lie algebra:
\begin{align*}
[C,D] &= [C, L_m] = [D, L_m] = 0  \qquad\qquad (m,k\in\ZZ) \,, \\
[L_{m}, L_{k}] &= (m-k)\sum_{j= 0}^\infty (j+1)L_{m+k-j}D^{j}\\
+ & \,\delta_{m+k\ge0} \, (m - k)\bigl((m-k)^2 - m - k - 4\bigr) \binom{m+k + 3}{3} \frac{C}{96}D^{m+k}\,;
\end{align*}
see Section \ref{sec4.3} below for more details.

Let us point out that several questions remain open about
the relationship between non-local Poisson vertex algebras and logarithmic vertex algebras. 
First, for ordinary vertex algebras, there is a forgetful functor to Lie conformal algebras:
every vertex algebra is a Lie conformal algebra with the $\la$-bracket \eqref{e1.3}. Furthermore, the axioms of a vertex algebra can be formulated
as a Lie conformal algebra with an additional product $\mu_{(-1)}$, so that they generalize those of a Poisson vertex algebra \cite{BK}.
We do not have an analogue of this for logarithmic vertex algebras. 
In fact, a logarithmic vertex algebra $V$ is not a non-local Lie conformal algebra with the $\la$-bracket \eqref{e1.7}; 
only its associated graded $\gr V$ is. 

A related question is how to generate a logarithmic vertex algebra from a non-local Lie conformal algebra, perhaps with some additional structure. In particular, we do not have a construction of a universal logarithmic vertex algebra $V(R)$ generated by a non-local Lie conformal algebra $R$. 

Finally, there exist examples of non-local Poisson vertex algebras (e.g.\ the non-linear Schr\"odinger from \cite{DK}), in which the $\la$-bracket is given by 
\eqref{e1.7} but for a braiding map $\N$ that does not satisfy all required axioms. This suggests that logarithmic vertex algebras could be generalized further by relaxing some of the conditions on $\N$. We will address this question in a forthcoming publication.

This paper is organized as follows. In Section \ref{sec2}, we give short introductions to non-local Lie conformal algebras, non-local Poisson vertex algebras, and logarithmic vertex algebras.

We start Section \ref{sec3} by defining filtered logarithmic vertex algebras and their associated graded, and stating the main result of the paper (Theorem \ref{thm3.3}).
The rest of the section is devoted to the proof of Theorem \ref{thm3.3} as follows. 
First, in Section \ref{sec3.2},  we  prove that the associated graded is a commutative associative unital differential algebra with the product \eqref{e1.5}.
Next, in Section \ref{sec3.3}, we prove that the $\lambda$-bracket \eqref{e1.8} satisfies the Leibniz rule and a few other properties. 
Last, the most difficult part is the proof of the Jacobi identity in Section \ref{sec3.4}.

In Section \ref{sec4}, we present three examples of non-local Poisson vertex algebras.
The first two are obtained as associated graded of logarithmic vertex algebras from \cite{BV}. 
The third example is the above-mentioned potential Virasoro--Magri non-local Poisson vertex algebra, 
which we realize as the associated graded of a new logarithmic vertex algebra.

Throughout the paper, we denote by $\ZZ_+$ the set of non-negative integers, and we use the divided-powers notation $x^{(k)}=x^k/k!$ for $k\ge0$; $x^{(k)}=0$ for $k<0$. We will work with vector superspaces over $\CC$. 
For a vector superspace $V=V_{\bar0}\oplus V_{\bar1}$ and a vector $v\in V_{\alpha}$, where $\alpha\in \ZZ/2\ZZ=\{\bar0,\bar1\}$, we denote the parity of $v$ by $p_v=\alpha$. Unless otherwise specified, all tensor products and $\Hom$'s are over $\CC$.
We denote the identity operator by $I$.

\section{Preliminaries}\label{sec2}
We start this section with a brief review of some notation on superspaces. Then we recall the definition and examples of non-local Poisson vertex algebras following \cite{DK}. 
Finally, we give a short introduction to logarithmic vertex algebras following \cite{BV}.

\subsection{Vector superspaces}\label{sec2.1}

Recall that the tensor product of two superspaces $V=V_{\bar0}\oplus V_{\bar1}$ and $W=W_{\bar0}\oplus W_{\bar1}$ is the superspace $V\otimes W$ with
\begin{equation*}
(V\otimes W)_{\alpha}=\bigoplus_{\beta\in \ZZ/2\ZZ}V_{\beta}\otimes W_{\alpha-\beta}\,,
\qquad \alpha\in \ZZ/2\ZZ = \{\bar0,\bar1\} \,.
\end{equation*}
To make formulas more compact, we will denote the parity of $a$ as $p_a$ and write $p_{a,b}$ for the product of parities $p_{a}p_{b}$.
We define the transposition operator on the superspace $V\otimes V$ as
\begin{equation}\label{logf1}
P(a\otimes b)=(-1)^{p_{a,b}}b\otimes a\, , \qquad a, b\in V\,.
\end{equation}
 As customary when writing such identities, by linearity, we assume that the elements $a$ and $b$ are homogeneous with respect to parity.

The vector space $\End(V)$, consisting of all linear operators on $V$, is naturally a superspace with 
\begin{equation*}
\End(V)_{\alpha}=\bigl\{\varphi\in \End(V)  \,\big|\, \varphi V_{\beta}\subset V_{\alpha+\beta} \;\; \forall\beta\in \ZZ/2\ZZ\bigr\}\,, \qquad \alpha\in \ZZ/2\ZZ\,,
\end{equation*}
and is an associative superalgebra with the product given by composition.
For any associative superalgebra, the commutator bracket 
\begin{equation*}
[a,b]:=ab-(-1)^{p_{a,b}}ba
\end{equation*}
defines the structure of a Lie superalgebra. 

Throughout the paper, we will work with an even endomorphism
\[\N\in \End(V)\otimes \End(V)\, .\] 
We can write $\N$ as a finite sum
\begin{equation}\label{logf-n}
\N=\sum_{i=1}^{L}\phi_{i}\otimes  \psi_{i}\, , \qquad \phi_{i}, \psi_{i}\in \End(V) \,, \quad p_i:=p_{\phi_i}=p_{\psi_i} \,,
\end{equation}
where $\phi_1,\dots,\phi_L$ are linearly independent
and $\psi_1,\dots,\psi_L$ are linearly independent.
The action of $\N$ on $V\otimes V$ is given by:
\begin{equation}\label{logf-nact}
\N(a\otimes b)=\sum_{i=1}^{L}(-1)^{p_{i,a}}\phi_{i}(a)\otimes  \psi_{i}(b)\, , \qquad a,b\in V\, .
\end{equation}

Given $\N$ as above, we define the endomorphisms $\N_{12}, \N_{23},\N_{13}$ $\in$ $\End(V)$ $\otimes \End(V)\otimes \End(V)$ as follows:
\begin{equation}\label{logf-1}
\begin{split}
\N_{12}&=\N\otimes I = \sum_{i=1}^{L} \phi_{i} \otimes \psi_{i} \otimes I \, , \\
\N_{23}&=I\otimes\N = \sum_{i=1}^{L} I \otimes \phi_{i} \otimes \psi_{i} \, , \\
\N_{13}&=(I\otimes P) \N_{12} (I\otimes P) = \sum_{i=1}^{L} \phi_{i} \otimes I \otimes \psi_{i} \,,
\end{split}
\end{equation}
where $I$ denotes the identity operator and $P$ is the transposition \eqref{logf1} on $V\otimes V$.
We will also use the notation $P_{12}=P \otimes I$.

\subsection{Non-local Poisson vertex algebras}\label{sec2.2}

In this subsection, we introduce the definitions of admissible brackets, non-local Lie conformal algebras, and non-local Poisson vertex algebras. This Section is based on \cite{DK}, except for Examples \ref{ex3.2}, \ref{ex-gl}, \ref{ex2.12.1}, and \ref{ex2.13}, which seem new.

\subsubsection{Admissible brackets}

Given a complex vector superspace $\mathcal{V}$, we consider the superspace 
\begin{equation}\label{Vlamu}
\mathcal{V}_{\lambda, \mu} :=\mathcal{V}[[\lambda^{-1},\mu^{-1},(\lambda+\mu)^{-1}]][\lambda, \mu] \,,
\end{equation}
where $\la$ and $\mu$ are even formal variables.
We refer to \cite[Section 2.1]{DK} for a detailed description of $\V_{\la,\mu}$.
In particular, we have natural embeddings
\begin{equation}\label{iota}
\begin{split}
\iota_{\mu, \lambda}\colon\mathcal{V}_{\lambda, \mu} &\rightarrow \mathcal{V}(\!(\lambda^{-1})\!)(\!(\mu^{-1})\!)\, ,\\
\iota_{\lambda, \mu}\colon\mathcal{V}_{\lambda, \mu} &\rightarrow \mathcal{V}(\!(\mu^{-1})\!)(\!(\lambda^{-1})\!)\, ,\\
\iota_{ \lambda, \lambda+\mu}\colon\mathcal{V}_{\lambda, \mu} &\rightarrow \mathcal{V}(\!((\lambda+\mu)^{-1})\!)(\!(\lambda^{-1})\!)\, ,
\end{split}
\end{equation}
defined by expanding one of the variables $\lambda$, $\mu$ or $\lambda+\mu$ in terms of the other two. More explicitly, in these expansions we let for $n<0$:
\begin{align*}
\iota_{\mu, \lambda}(\lambda+\mu)^{n}&=\sum_{k\in \mathbb{Z}_{+}}\binom{n}{k}\mu^{n-k}{\lambda^{k}}\, , \\
\iota_{\lambda, \mu}(\lambda+\mu)^{n}&=\sum_{k\in \mathbb{Z}_{+}}\binom{n}{k}\lambda^{n-k}{\mu^{k}}\, , \\
\iota_{\lambda, \lambda+\mu}\mu^{n}&=\sum_{k\in \mathbb{Z}_{+}}\binom{n}{k}(-1)^{n-k}\lambda^{n-k}{(\lambda+\mu)^{k}}\,.
\end{align*}

 
The main object in this section is an even linear map
\begin{equation*}
\{ \cdot_{\lambda} \cdot \} \colon\mathcal{V}\otimes\mathcal{V}\rightarrow\mathcal{V}(\!(\lambda^{-1})\!) \, ,
\end{equation*}
called the $\lambda$-\emph{bracket}. For any $a,b\in \V$, we can write their $\lambda$-bracket in the form
\begin{equation}\label{e2.8}
\{a_{\lambda}b\}=\sum_{n=-\infty}^{N}c_{n}\lambda^n
\end{equation}
for some $N\in \ZZ_{+}$ and $c_{n}\in\V$.
In general, when we compose such brackets, we obtain elements in the following spaces: 
\begin{align*}
\{a_{\lambda}\{b_{\mu}c\}\} &\in \mathcal{V}(\!(\lambda^{-1})\!)(\!(\mu^{-1})\!)\, ,\\
\{b_{\mu}\{a_{\lambda}c\}\} &\in \mathcal{V}(\!(\mu^{-1})\!)(\!(\lambda^{-1})\!)\, ,\\
\{\{a_{\lambda}b\}_{\lambda+\mu}c\} &\in \mathcal{V}(\!((\lambda+\mu)^{-1})\!)(\!(\lambda^{-1})\!)\, ,
\end{align*}
where $a,b,c\in \V$. 
In order to be able to express identities relating these elements, they have to belong in the same space,
which leads to the next definition.
\begin{definition}\label{admisible} We say that the $\la$-bracket $\{ \cdot_{\lambda} \cdot\}$ is \emph{admissible} if for all $a,b,c\in \mathcal{V}$ we have:
\begin{align}
\label{adm1}
\{a_{\lambda}\{b_{\mu}c\}\} &\in \iota_{\mu, \lambda} \mathcal{V}_{\lambda,\mu}\, ,\\
\label{adm2}
\{b_{\mu}\{a_{\lambda}c\}\} &\in \iota_{\lambda, \mu} \mathcal{V}_{\lambda,\mu}\, ,\\
\label{adm3}
\{\{a_{\lambda}b\}_{\lambda+\mu}c\} &\in \iota_{\lambda, \lambda+\mu} \mathcal{V}_{\lambda,\mu}\, .
\end{align}
\end{definition}

\subsubsection{Non-local Lie conformal algebras}
Consider a $\CC[\partial]$-module $\V$, i.e., a vector superspace equipped with an even linear map 
$\partial\colon \V\rightarrow \V$.
Below, we will always expand in non-negative powers of $\partial$ as follows:
\begin{equation}\label{e2.9e}
(\lambda+\partial)^{n}:=\sum_{k\in \mathbb{Z}_{+}}\binom{n}{k}\lambda^{n-k}{\partial^{k}} \,.
\end{equation}
For example, from \eqref{e2.8}, we have that 
\begin{equation}\label{e2.9}
\begin{split}
\{a_{-\lambda-\partial}b\}
&=\sum_{n=-\infty}^{N}(\lambda+\partial)^{n} (-1)^n c_{n} \\
&=\sum_{n=-\infty}^{N}\sum_{k\in \mathbb{Z}_{+}}\binom{n}{k}(-1)^n \lambda^{n-k}{\partial^{k}}c_{n}\\
&=\sum_{m=-\infty}^{N}\left(\sum_{k=0}^{N-m}\binom{m+k}{k}(-1)^{m+k} \partial^{k}c_{m+k}\right)\lambda^{m}\, .
\end{split}
\end{equation}
Hence, $\{a_{-\lambda-\partial}b\}\in \V(\!(\lambda^{-1})\!)$.

\begin{definition}\label{def2.2}
A \emph{non-local Lie conformal algebra} (abbreviated non-local LCA)  is a $\mathbb{C}[\partial]$-module $\mathcal{V}$ equipped with an admissible $\la$-bracket
$$\{ \cdot_{\lambda}\cdot \} \colon\mathcal{V}\otimes\mathcal{V}\rightarrow\mathcal{V}(\!(\lambda^{-1})\!)\, ,$$
 satisfying the following axioms:
\begin{align*}
\text{(sesqui-linearity)}&\quad \{\partial a_{\lambda}b\}=-\lambda \{a_{\lambda}b\}\, ,\quad\{a_{\lambda}\partial b\}=(\lambda+\partial)\{a_{\lambda} b\}\, ,\\
\text{(skew-symmetry)}&\quad \{b_{\lambda}a\}=-(-1)^{p_{a,b}} \{a_{-\lambda-\partial}b\}\, ,\\
\text{(Jacobi identity)}&\quad \{a_{\lambda}\{b_{\mu}c\}\}=\{\{a_{\lambda}b\}_{\lambda+\mu}c\}+(-1)^{p_{a,b}}\{b_{\mu}\{a_{\lambda}c\}\}\, .
\end{align*}
\end{definition}

In the skew-symmetry identity above we use \eqref{e2.9}, and the Jacobi identity is viewed as an identity in $\V_{\lambda,\mu}$ after we identify $\V_{\lambda,\mu}$ with its image under the embeddings \eqref{iota}; see Definition \ref{admisible}. 
Note that, by sesqui-linearity, the $\la$-bracket is uniquely determined by its values on a set of generators of $\V$ as a $\C[\partial]$-module.

\begin{remark}\label{rem2.1} 
In \cite{DK}, the authors initially use a weaker definition of admissible brackets, which requires only \eqref{adm1}. It was shown in \cite[Remark 3.3]{DK} that every skew-symmetric $\la$-bracket satisfying \eqref{adm1} must satisfy \eqref{adm2} and \eqref{adm3} as well; hence, it is admissible according to our definition.
\end{remark}

Note that if the $\la$-bracket takes values in $\V[\la] \subset \V(\!(\lambda^{-1})\!)$, then the notion of a non-local Lie conformal algebra coincides with that of a Lie conformal algebra \cite{K1}.
In particular, any $\C[\partial]$-module with the trivial $\la$-bracket $\{ \cdot_{\lambda}\cdot \} = 0$ is a (non-local) Lie conformal algebra. Here are a few more examples.


\begin{example}[Potential free boson LCA]\label{cur2} The $\C[\partial]$-module 
\[\V=\bigl(\C[\partial]\otimes \C x \bigr) \oplus \C K\, ,\qquad\partial K=0\, ,\]
is a non-local Lie conformal algebra with $\lambda$-bracket defined by  
\begin{equation}\label{equ2.11}
\{x_{\lambda}x\}=-\frac{1}{\lambda}K \,, \qquad \{K_\la\V\} = 0 \, .
\end{equation}
Then, by sesqui-linearity, $\partial x =x'$ satisfies the $\lambda$-bracket of the free boson or Heisenberg Lie conformal algebra:
\begin{equation}\label{equ2.11b}
\{x'_{\lambda}x'\}=\lambda K \, .
\end{equation}
\end{example}

\begin{example}[Potential Virasoro--Magri LCA]\label{ex3} The $\C[\partial]$-module 
\[\bigl(\C[\partial]\otimes \C u\bigr) \oplus \C C\, ,\quad\partial C=0\, ,\]
is a non-local Lie conformal algebra with $\lambda$-bracket defined by  
\begin{equation}\label{equ2.12}
\{u_{\lambda}u\}=-\Bigl(\frac{1}{\lambda+\partial}+\frac{1}{\lambda}\Bigr) u'-\frac{\lambda}{12}C\, ,
\end{equation}
where $\partial u =u'$. Note that, by sesqui-linearity, $u'$ satisfies the relations of the Virasoro Lie conformal algebra:
\begin{equation}\label{equ2.12.1}
\{u'_{\lambda}u'\} 
=(2\lambda+\partial)u'+\frac{\lambda^{3}}{12}C\, .
\end{equation}
\end{example}

Motivated by the previous example, 
we consider a similar construction for affine Lie conformal algebras.

\begin{example}[Potential affine LCA]\label{ex3.2} Let $\lieg$ be a Lie algebra with a non-degenerate symmetric invariant bilinear form $(\cdot\,|\,\cdot )$.  The $\C[\partial]$-module 
\[\bigl(\C[\partial]\otimes \lieg\bigr) \oplus \C K\, ,\quad\partial K=0\, ,\]
is a non-local Lie conformal algebra with $\lambda$-bracket defined by  
\begin{equation}\label{equ2.12.a}
\{a_{\lambda}b\}=\Bigl(\frac{1}{\lambda+\partial}-\frac{1}{\lambda}\Bigr)[a,b] - \frac{1}{\lambda} (a|b) K\,.
\end{equation}
Note that, by sesqui-linearity, $\partial a=a'$ and $\partial b=b'$ satisfy the relations of the affine Lie conformal algebra:
\begin{equation}\label{equ2.12.b}
\{ a'_{\lambda} b'\}=[a, b]'+\lambda (a|b) K\, .
\end{equation}
\end{example}

Our last example is more intricate, and its $\lambda$-brackets will be derived in Section \ref{ex4.3} below.
The fact that they satisfy the axioms of a non-local LCA is a non-trivial consequence of our main theorem (Theorem \ref{thm3.3}).

\begin{example}[Gurarie--Ludwig LCA]\label{ex-gl}
The $\C[\partial]$-module 
\begin{equation*}
\C[\partial]\otimes \bigl(\C L +\C\xi+\C\bar{\xi}+\C\ell \bigr) \oplus \C K,\qquad\partial K=0 \,,
\end{equation*}
with even variables $L, \ell,K$ and odd variables $\xi, \bar{\xi}$, is a non-local Lie conformal algebra with the following $\lambda$-brackets 
on the generators:  
\begin{equation}\label{equ2.14}
\begin{split}
\{L_{\lambda}L\}&=(2\lambda+\partial)L\,, \qquad\qquad
\{L_{\lambda}\ell\} =(2\lambda+\partial)\ell+ \frac{\lambda^3}{6} K\, , \\
\{L_{\lambda}\xi\}&=(2\lambda+\partial)\xi\, , \qquad\qquad\,
\{L_{\lambda}\bar{\xi}\}=(2\lambda+\partial)\bar{\xi}\, , \\
\{\ell_{\lambda}\xi\}&= \Bigl(\frac{1}{\lambda+\partial}\xi\Bigr)L\, , \qquad\quad\;\;
\{\ell_{\lambda}\bar{\xi}\} = \Bigl(\frac{1}{\lambda+\partial}\bar{\xi}\Bigr)L\, , \\
\{\ell_{\lambda}\ell\}&=2\Bigl(\frac{1}{\lambda+\partial}\xi\Bigr)\bar{\xi}-2\Bigl(\frac{1}{\lambda+\partial}\bar{\xi}\Bigr){\xi}\,,  \\
\{\xi_{\lambda}\bar{\xi}\}&=\frac{1}{2}(\partial+2\lambda) \ell + \frac{\lambda^3}{12} K - \frac{1}{2}\Bigl(\frac{1}{\lambda+\partial}L\Bigr)L \,, 
\end{split}
\end{equation}
where all other brackets are either zero or obtained from skew-symmetry.
\end{example}

\subsubsection{Non-local Poisson vertex algebras}
We now present one of the main definitions from \cite{DK}.

\begin{definition}\label{def2.9}
A \emph{non-local Poisson vertex algebra} (abbreviated non-local PVA)  is a quintuple $(\mathcal{V},1, \cdot , \partial ,\{ \cdot_{\lambda}\cdot\})$ satisfying the following axioms:
\begin{enumerate}
\smallskip
\item[(1)] $(\mathcal{V},\partial,\{ \cdot_{\lambda}\cdot \})$ is a non-local Lie conformal algebra;
\smallskip
\item[(2)] $(\mathcal{V}, 1, \partial, \cdot )$ is a commutative associative unital differential algebra;
\smallskip
\item[(3)]\label{de3} $\{a_{\lambda}bc\}=\{a_{\lambda}b\}c+(-1)^{p_{a,b}} b\{a_{\lambda}c\}$ for all $a,b,c\in\mathcal{V}$ (Leibniz rule).
\end{enumerate}
\end{definition}

Now, we describe examples of non-local PVA's. These examples appear as quotients of the symmetric algebras over the Lie conformal algebras in Examples \ref{cur2}--\ref{ex-gl}, with $\partial$ extended as a derivation and the bracket $\{\cdot_{\lambda}\cdot\}$ extended by the Leibniz rule. We quotient by the relation that identifies the central element $K$ or $C$ (if present) with a certain constant multiple of $1$.


\begin{example}[Potential free boson PVA]\label{ex2.11} The $\C[\partial]$-module 
\[\C[x,x',x'',\dots] \]
is a non-local PVA with $\lambda$-bracket $\{x_\lambda x\}$ defined by  \eqref{equ2.11} with $K=1$.
\end{example}

\begin{example}[Potential Virasoro--Magri PVA]\label{ex2.12} For a fixed $c\in\C$ (called the central charge), we have the non-local PVA
\[\C[u,u',u'',\dots] ,\]
where the $\lambda$-bracket $\{u_\lambda u\}$ is defined by \eqref{equ2.12} with $C=c1$.
\end{example}

\begin{example}[Potential affine PVA]\label{ex2.12.1}  Let $\lieg$ be a Lie algebra with a non-degenerate symmetric invariant bilinear form $(\cdot\,|\,\cdot )$,
and a basis $\{a_1, \dots , a_N\}$.
For a fixed $k\in\C$ (called the level), we have the non-local PVA
\begin{equation*}
\CC[a_1, \dots , a_N , a'_1, \dots , a'_N , a''_1, \dots , a''_N , \dots ]
\end{equation*}
with $\lambda$-bracket defined by \eqref{equ2.12.a} with $K=k1$.
\end{example}

\begin{example}[Gurarie--Ludwig PVA]\label{ex2.13} For a fixed $\beta\in\C$, the $\C[\partial]$-module 
\begin{equation*}
\CC[L, \xi, \bar{\xi}, \ell, L', \xi', \bar{\xi}', \ell' , L'', \xi'', \bar{\xi}'', \ell'' ,\dots ]
\end{equation*}
is a non-local PVA with $\lambda$-bracket defined by \eqref{equ2.14} with $K=\beta 1$.
\end{example}

As in \cite{DK}, we will use the notation
\begin{equation}\label{earr}
\{a_{\lambda+\partial}b\}_{\rightarrow}c:=\sum_{n=-\infty}^{N}c_{n}(\lambda+\partial)^nc
\end{equation}
for $\{a_{\lambda}b\}$ given by \eqref{e2.8}. Then the skew-symmetry and Leibniz rule imply the following \emph{right Leibniz rule}:
\begin{equation}\label{lm2.6}
\{ab_{\lambda}c\}=(-1)^{p_{b,c}}\{a_{\lambda+\partial}c\}_{\rightarrow}b+(-1)^{p_{a,b}+p_{a,c}}\{b_{\lambda+\partial}c\}_{\rightarrow}a\, .
\end{equation}

\subsection{Logarithmic vertex algebras}\label{sec2.3}

In this subsection, following \cite{BV}, we review the definitions of logarithmic fields, braiding maps, logarithmic vertex algebras, and some of their main properties.

\subsubsection{Logarithmic fields and braiding maps}
We will denote by $\z,\z_1,\z_2,\dots$ and $\ze,\ze_1,\ze_2,\dots$ commuting even formal variables, 
and we will use the notation $z_{ij}=z_i-z_j$.
The variables $\ze,\ze_1,\ze_2,\dots$ are thought of as $\ze=\log\z$ and $\ze_i=\log\z_i$. We consider the operators
\begin{equation*}
D_\z = \d_\z+\z^{-1} \d_\ze\,, \qquad D_{\z_i} = \d_{\z_i}+\z_i^{-1} \d_{\ze_i}\,.
\end{equation*}
Note that $D_\z$ is the total derivative with respect to $\z$ if we set $\ze=\log\z$. We will also need the formal power series
\begin{equation}\label{eq2.10}
\vartheta_{12}:=\zeta_{1}-\sum_{n=1}^\infty \frac{1}{n}z_{1}^{-n} z_{2}^{n} \,,
\end{equation}
which can be thought of as an expansion of $\log (z_{12}) = \log z_1+\log(1-\frac{z_2}{z_1})$.
Similarly, we define
\begin{equation}\label{eq2.11}
\vartheta_{21}:=\zeta_{2}-\sum_{n=1}^\infty \frac{1}{n}z_{1}^{n} z_{2}^{-n} \,,
\end{equation}
which is an expansion of $\log (z_{21})$.

Now we introduce the notion of a logarithmic field.

\begin{definition}\label{d2.1}
Let $V$ be a vector superspace.
The superspace 
\begin{equation*}
\LF(V) = \LF(V)_{\bar0} \oplus \LF(V)_{\bar1}
\end{equation*}
of \emph{logarithmic (quantum) fields} is defined by
\begin{equation*}
\LF(V)_\alpha = \Hom(V_{\bar0},V_{\alpha}(\!(\z)\!)[\ze]) \oplus \Hom(V_{\bar1},V_{\alpha+\bar1}(\!(\z)\!)[\ze]) \,,
\qquad \al\in\ZZ/2\ZZ \,.
\end{equation*}
In particular, when $V$ is a (purely even) vector space, we have
\begin{equation*}
\LF(V) = \Hom(V,V(\!(\z)\!)[\ze]) \,.
\end{equation*}
We denote the elements of $\LF(V)$ as $a(z,\ze)$ or $a(\z)$ for short. 
\end{definition}

In the above definition, $V(\!(\z)\!)[\ze] = V[[z]][z^{-1},\ze]$ stands for the space of polynomials in $\ze$ whose coefficients are
formal Laurent series in $z$.

\begin{definition}\label{lm}
Let $V$ be a vector superspace. An (infinitesimal) \emph{braiding map} $\N$ on $V$ is an even linear operator
\begin{equation*}
\N\in \End(V)\otimes \End(V)\, ,
\end{equation*}
satisfying the following two conditions:
\begin{enumerate}

\item\label{2.4-2}
$\N$ is \emph{symmetric}, i.e., $\N P=P \N\,$ where $P$ is the transposition \eqref{logf1};

\smallskip
\item\label{2.4-3}

$[\N_{12},\N_{23}]=[\N_{13},\N_{23}]=[\N_{12},\N_{13}]=0$, where $\N_{ij}$ are as in \eqref{logf-1}.
\end{enumerate}
\end{definition}

Finally, we say that $\N$ is \emph{locally nilpotent} on $V \otimes V$ if for all $a,b\in V$ there exists a positive integer $r$ such that  
$\N^r (a\otimes b) = 0$. 

\subsubsection{Logarithmic vertex algebras}\label{sec2.3.2}

\begin{definition} \label{d3.11}
A \emph{logarithmic vertex algebra} (abbreviated logVA) 
is a vector superspace $V$, equipped with an even vector $\vac\in V_{\bar 0}$, an even endomorphism $T\in \End(V)_{\bar 0}$,  an even linear map 
\[Y\colon  V\to \LF(V)\,, \qquad a\mapsto Y(a,z)\,, \]
and a braiding map $\N$ on $V$ (see \deref{lm}),
subject to the following axioms:

\medskip
(\emph{vacuum})\; $Y(\vac,z)=I$, \; $Y(a,z)\vac\in V[\![z]\!]$, \; $Y(a,z)\vac\big|_{z=0} = a$, \; $T\vac=0$.

\medskip
(\emph{translation covariance})\; 
$[T,Y(a,z)] = D_z Y(a,z)$.

\medskip
(\emph{nilpotence})\; $\N$ is locally nilpotent on $V\otimes V$.

\medskip
(\emph{locality})\;  For every $a,b\in V$, there exists $N\in\ZZ_+$ such that for all $c\in V$, 
\begin{equation}\label{eq2.12}
\begin{split}
Y(z_1)& (I \otimes Y(z_2)) z_{12}^{N}e^{ \vartheta_{12}\N_{12}} (a\otimes b\otimes c )\\
&=(-1)^{p_{a,b}} \, Y(z_2) (I \otimes Y(z_1)) z_{12}^{N}e^{\vartheta_{21} \N_{12}} (b\otimes a\otimes c ),
\end{split}
\end{equation}
where we use the notation $Y(z)(a\otimes b):=Y(a,z)b$.

\medskip
(\emph{hexagon})\; The following identity is satisfied on $V^{\otimes 3}$: 
\begin{equation} \label{eq2.13}
\N (Y(z)\otimes I) = (Y(z)\otimes I) (\N_{13}+\N_{23})\,,
\end{equation}
where we use the notation \eqref{logf-1}.
\end{definition}

We denote a logVA by $(V,\vac,T, Y,\N)$ or simply $V$ for short.
We will provide some examples of logarithmic vertex algebras in Section \ref{sec4} below.

\subsubsection{Properties of logVAs}
Now we discuss some properties of logVAs. In any logVA\/ $V$, for each vector $a\in V$,  we have a family of linear operators $a_{(n+\N)} \in\End(V)$, called the \emph{modes} of $a$, given by 
\begin{equation}\label{logva1} 
X(a,z):=Y(a,z)\big|_{\ze=0} = \sum_{n\in \ZZ} a_{(n+\N)} \, z^{-n-1} \,, \qquad a\in V\,.
\end{equation}
These define $\CC$-bilinear maps called $(n+\N)$-\emph{products}:
\begin{equation}\label{eq3.7a}
\mu_{(n)} \colon V\otimes V\rightarrow V\,, \quad {\mu}_{(n)}(a\otimes b)= a_{(n+\N)}b\, , \qquad n\in \ZZ\, .
\end{equation}
The logarithmic fields in a logVA can be expressed in terms of $(n+\N)$-products as follows
\cite[Proposition 3.7]{BV}:
\begin{equation}\label{p-modes1}
\begin{split}
Y(a,z)b &= \sum_{n\in \ZZ} \mu_{(n)} \bigl( z^{-n-1-\N} (a \otimes b) \bigr) \\
&=\sum_{ \substack{n\in\ZZ \\ i\in\ZZ_+} } \frac{(-1)^{i}}{i!} \ze^i z^{-n-1} \mu_{(n)} \bigl( \N^i (a \otimes b) \bigr) \,,
\end{split}
\end{equation}
where $z^{-\N}:=e^{-\ze\N}$.


The $(n+\N)$-products satisfy an analogue of the \emph{Borcherds identity} for all $k,m,n\in\ZZ$: 
\begin{equation}\label{borcherds2}
\begin{split}
\sum_{j\in \ZZ_+} &(-1)^{j}\mu_{(m+n-j)}(I\otimes \mu_{(k+j)})\binom{n+\N_{12}}{j}\\
& -\sum_{j\in \ZZ_+}(-1)^{n+j}\mu_{(n+k-j)}(I\otimes \mu_{(m+j)})\binom{n+\N_{12}}{j} P_{12} \\
& =\sum_{j\in \ZZ_+} \mu_{(m+k-j)}( \mu_{(n+j)}\otimes I) \binom{m+\N_{13}}{j}\,,
\end{split}
\end{equation}
where $P_{12} = P \otimes I$ and $P$ is the transposition \eqref{logf1}.
This is an identity on $V^{\otimes 3}$, and is proved in \cite[Theorem 3.26]{BV}.

We will also need the following \emph{skew-symmetry} relation in logVAs
\cite[Proposition 3.22]{BV}:
\begin{equation}\label{ss1}
Y(a,z,\ze) b = (-1)^{p_{a,b}} e^{zT}Y(b,-z,\ze)a \,.
\end{equation}
In particular, setting $\zeta=0$ in \eqref{ss1}, we have
\begin{equation}\label{ss2}
X(a,z) b = (-1)^{p_{a,b}} e^{zT}X(b,-z)a \,.
\end{equation}

It will be useful to express some of the properties of the fields $Y(a,z)$ or $X(a,z)$
as properties of the $(n+\N)$-products.

\begin{proposition}\label{co}
For any logVA\/ $V$ and\/ $a,b\in V$, $n\in\ZZ$, we have the following relations{\rm:}
\begin{enumerate}
\medskip

\item \label{cor2.1}
$\mu_{(-1)}(\vac \otimes a)=\mu_{(-1)}(a\otimes \vac)=a;$
\medskip

\item \label{cor2.2}
$\mu_{(n)}(Ta \otimes b)=-n\mu_{(n-1)}(a \otimes b)-\mu_{(n-1)}(\N(a\otimes b));$
\medskip

\item \label{cor2.3}
$T\,\mu_{(n)}(a\otimes b)=\mu_{(n)}(Ta \otimes b)+\mu_{(n)}(a\otimes Tb);$
\medskip

\item \label{cor2.4}
$\mu_{(n)}( a\otimes b)=- (-1)^{p_{a,b}} \displaystyle\sum_{j\in\ZZ_+}(-1)^{j+n}\frac{1}{j!}T^{j}\mu_{(n+j)}(b\otimes a)$.
\end{enumerate}
\end{proposition}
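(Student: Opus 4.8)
The plan is to deduce all four relations from the defining axioms of a logVA together with the mode expansion \eqref{p-modes1}, the skew-symmetry relations \eqref{ss1}--\eqref{ss2}, and translation covariance. Parts \eqref{cor2.1} and \eqref{cor2.4} are immediate from the stated results, while \eqref{cor2.2} and \eqref{cor2.3} both follow from the auxiliary identity $Y(Ta,z)=D_z Y(a,z)$.

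For \eqref{cor2.1}: the vacuum axiom $Y(\vac,z)=I$ gives $X(\vac,z)=\sum_{n}\mu_{(n)}(\vac\otimes\,\cdot\,)z^{-n-1}=I$, so reading off the coefficient of $z^{0}$ yields $\mu_{(-1)}(\vac\otimes a)=a$. For the other equality, the vacuum axiom forces $Y(a,z)\vac\in V[\![z]\!]$, hence it is $\ze$-free and equals $X(a,z)\vac$; its constant term in $z$ is $\mu_{(-1)}(a\otimes\vac)$, which the axiom $Y(a,z)\vac|_{z=0}=a$ identifies with $a$. For \eqref{cor2.4}: apply \eqref{ss2}, expand $X(a,z)b=\sum_{n}\mu_{(n)}(a\otimes b)z^{-n-1}$ and $e^{zT}X(b,-z)a=\sum_{j,m}\frac{z^{j}}{j!}(-1)^{m+1}T^{j}\mu_{(m)}(b\otimes a)z^{-m-1}$, and compare the coefficient of $z^{-n-1}$ (the resulting sum over $j$ is finite since $\mu_{(m)}(b\otimes a)=0$ for $m\gg0$). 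Collecting the signs gives \eqref{cor2.4} directly.

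The core of the argument is the identity $Y(Ta,z)=D_z Y(a,z)$ on all of $V$. I would prove it as follows: by \eqref{ss1}, $Y(Ta,z)b=(-1)^{p_{a,b}}e^{zT}Y(b,-z)(Ta)$; translation covariance applied to $Y(b,w)$ gives $Y(b,w)(Ta)=TY(b,w)a-D_w\bigl(Y(b,w)a\bigr)$, and substituting $w=-z$, using that the $\ze$-dependence is untouched by the substitution while the $\d_w$ part changes sign so that $(D_wF)(w,\ze)|_{w=-z}=-D_z\bigl(F(-z,\ze)\bigr)$, gives $Y(b,-z)(Ta)=\bigl(T+D_z\bigr)Y(b,-z)a$. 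Feeding this back in, commuting $e^{zT}$ past $T$, using $D_z(e^{zT}g)=Te^{zT}g+e^{zT}D_z g$, and applying \eqref{ss1} once more collapses everything to $Y(Ta,z)b=D_zY(a,z)b$. (If preferred, this identity can instead be quoted from \cite{BV}.) Granting it, \eqref{cor2.2} comes from setting $\ze=0$ and using \eqref{p-modes1} to evaluate $\bigl(D_zY(a,z)\bigr)|_{\ze=0}=\d_z X(a,z)-z^{-1}\sum_{n}z^{-n-1}\mu_{(n)}(\N(a\otimes\,\cdot\,))$, then comparing coefficients of $z^{-n-1}$; and \eqref{cor2.3} comes from combining $Y(Ta,z)=D_zY(a,z)$ with translation covariance $[T,Y(a,z)]=D_zY(a,z)$ to get $Y(Ta,z)=[T,Y(a,z)]$, setting $\ze=0$, and reading off the coefficient of $z^{-n-1}$.

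The main obstacle is the bookkeeping in the proof of $Y(Ta,z)=D_zY(a,z)$: one must track carefully how the substitution $w\mapsto -z$ interacts with the logarithmic derivation $D_w=\d_w+w^{-1}\d_\ze$ and keep the parity factor $(-1)^{p_{a,b}}$ consistent through two applications of skew-symmetry. Everything after that is a routine extraction of coefficients of the monomials $z^{-n-1}\ze^{0}$.
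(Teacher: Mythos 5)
Your proposal is correct and follows essentially the same route as the paper: part (1) from the vacuum axiom and the mode expansion \eqref{logva1}, part (4) from skew-symmetry \eqref{ss2} plus \eqref{logva1}, and parts (2)--(3) from the identity $Y(Ta,z)=D_zY(a,z)$ together with translation covariance and \eqref{p-modes1}. The only difference is that the paper simply quotes $D_zY(a,z)=Y(Ta,z)$ from \cite[Proposition 3.5]{BV}, whereas you rederive it from \eqref{ss1} and translation covariance; your derivation (including the sign bookkeeping $(D_wF)(w,\ze)\big|_{w=-z}=-D_z\bigl(F(-z,\ze)\bigr)$ and the commutation $D_z(e^{zT}g)=Te^{zT}g+e^{zT}D_zg$) is sound, so this is a harmless, self-contained substitute for the citation.
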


\begin{proof} 

\eqref{cor2.1} follows from \eqref{logva1} and the vacuum axiom in Definition \ref{d3.11}. 

\eqref{cor2.2} follows from the translation covariance axiom in Definition \ref{d3.11} and the mode expansion \eqref{p-modes1}.

\eqref{cor2.3} means that $T$ is a derivation of all $(n+\N)$-products. By translation covariance, this is equivalent to
$D_{z}Y(a,z)=Y(Ta,z)$,
and is proved in \cite[Proposition 3.5]{BV}.  

\eqref{cor2.4} follows from \eqref{logva1} and the skew-symmetry \eqref{ss2}, as in the case of ordinary vertex algebras (see \cite{K1}).
\end{proof}

Similarly, we express some properties of the braiding map $\N$ as
identities for the endomorphisms $\phi_{i}, \psi_{i}\in \End(V)$,  $1\leq i\leq L$, defined in \eqref{logf-n}.

\begin{proposition}\label{co3}
For any logVA\/ $V$ and\/ $a,b\in V$, we have{\rm:}
\begin{enumerate}
\medskip

\item \label{co3.11}
$\displaystyle\sum_{i=1}^{L}\phi_{i}\otimes  \psi_{i}=\sum_{i=1}^{L} (-1)^{p_i} \psi_{i}\otimes  \phi_{i} \,;$
\medskip

\item \label{co3.2}
$[\phi_{i}, \phi_{j}]=[\phi_{i}, \psi_{j}]=[\psi_{i}, \psi_{j}]=0;$
\medskip

\item \label{co3.3}
$\phi_{i}\,\mu_{(n)}(a\otimes b)=\mu_{(n)}(\phi_{i}a \otimes b)+(-1)^{p_{i,a}} \mu_{(n)}(a\otimes \phi_{i}b);$
\medskip

\item \label{co3.4}
$\psi_{i}\,\mu_{(n)}(a\otimes b)=\mu_{(n)}(\psi_{i}a \otimes b)+(-1)^{p_{i,a}} \mu_{(n)}(a\otimes \psi_{i}b);$
\medskip

\item \label{co3.1}
$[T, \phi_{i}]=[T, \psi_{i}]=0.$
\end{enumerate}
\end{proposition}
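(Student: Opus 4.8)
The plan is to prove each of the five identities in Proposition~\ref{co3} by decoding a corresponding structural fact about the braiding map $\N$ (as an element of $\End(V)\otimes\End(V)$) into the language of its components $\phi_i,\psi_i$, using only the decomposition~\eqref{logf-n}, the linear independence of the $\phi_i$ and of the $\psi_i$, and the already-established properties in \deref{lm} together with the hexagon axiom~\eqref{eq2.13} and its consequences.

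First I would handle \eqref{co3.11}: write out the symmetry condition $\N P=P\N$ from \deref{lm}\eqref{2.4-2} by applying both sides to $a\otimes b$ and using \eqref{logf1} and \eqref{logf-nact}; after cancelling the sign $(-1)^{p_{a,b}}$ and relabelling, one finds $\sum_i\phi_i\otimes\psi_i=\sum_i(-1)^{p_i}\psi_i\otimes\phi_i$ as operators on $V\otimes V$, which is exactly the stated identity. Next, \eqref{co3.2}: the three commutation relations $[\phi_i,\phi_j]=[\phi_i,\psi_j]=[\psi_i,\psi_j]=0$ should be extracted from the three identities $[\N_{12},\N_{23}]=[\N_{13},\N_{23}]=[\N_{12},\N_{13}]=0$ in \deref{lm}\eqref{2.4-3}. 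Expanding, e.g., $[\N_{12},\N_{23}]=\sum_{i,j}\pm[\phi_i]\otimes[\psi_i,\phi_j]\otimes[\psi_j]$ (with appropriate Koszul signs) and invoking the linear independence of the $\phi$'s in the first slot and of the $\psi$'s in the third slot forces $[\psi_i,\phi_j]=0$ for all $i,j$; the other two relations in \eqref{co3.2} come out of the remaining two vanishing commutators in the same way. Using \eqref{co3.11} one can also pass between the roles of $\phi$ and $\psi$ if needed.

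For \eqref{co3.3} and \eqref{co3.4} — that each $\phi_i$ and each $\psi_i$ is a derivation of every $(n+\N)$-product — the key input is the hexagon axiom~\eqref{eq2.13}, $\N(Y(z)\otimes I)=(Y(z)\otimes I)(\N_{13}+\N_{23})$. I would expand both sides in components on $a\otimes b\otimes c$, compare the coefficient of $z^{-n-1}$ after setting $\ze=0$ (using the mode expansion \eqref{p-modes1} and the definition \eqref{eq3.7a}), and read off the first-slot statement: $\sum_i\phi_i\,\mu_{(n)}(a\otimes b)\otimes\psi_i(\cdots)=\sum_i\mu_{(n)}(\phi_i a\otimes b)\otimes\psi_i(\cdots)+(-1)^{p_{i,a}}\mu_{(n)}(a\otimes\phi_i b)\otimes\psi_i(\cdots)$; linear independence of the $\psi_i$ then gives \eqref{co3.3} termwise in $i$. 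There is a symmetric hexagon-type identity for $\psi_i$ (either stated in \cite{BV} or obtainable from \eqref{co3.11} combined with \eqref{co3.3}), yielding \eqref{co3.4}. Finally, \eqref{co3.1} that $[T,\phi_i]=[T,\psi_i]=0$ follows from translation covariance: $T$ commutes with the action of $\N$ on $V\otimes V$ because $\N$ is built from the logarithmic structure compatibly with $D_z$ (concretely, $[T\otimes I+I\otimes T,\N]=0$, a consequence of translation covariance and the hexagon), and splitting this into components against the independent $\phi_i$ gives $[T,\psi_i]=0$, and against the independent $\psi_i$ gives $[T,\phi_i]=0$.

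The main obstacle I expect is \textbf{keeping the Koszul signs straight} when expanding the tensor-leg commutators $[\N_{12},\N_{23}]$ etc.\ and the hexagon identity in components, since the action \eqref{logf-nact} already carries a sign $(-1)^{p_{i,a}}$ and each transposition past a $\phi_j$ or $\psi_j$ contributes further parity factors; one must check that these signs are precisely what is needed for the linear-independence argument to produce the \emph{unsigned} commutators in \eqref{co3.2} and the stated signs in \eqref{co3.3}--\eqref{co3.4}. A secondary point of care is making sure that, after fixing the coefficient of a monomial in $z$ (and $\ze$), the remaining equation is genuinely an identity of operators on $V$ to which one can apply linear independence of $\{\phi_i\}$ or $\{\psi_i\}$ — i.e.\ that the ``spectator'' tensor factor is the one carrying the independent family. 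Everything else is a routine unwinding of definitions already collected in Section~\ref{sec2.3}.
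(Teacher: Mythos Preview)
Your arguments for \eqref{co3.11}--\eqref{co3.4} are correct and coincide with the paper's proof: symmetry of $\N$ gives \eqref{co3.11}; the three vanishing commutators in \deref{lm}\eqref{2.4-3}, expanded in components and combined with linear independence of the $\phi_i$ and of the $\psi_i$, give \eqref{co3.2}; the hexagon axiom gives \eqref{co3.3}; and \eqref{co3.4} follows from \eqref{co3.3} together with \eqref{co3.11}.

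The argument for \eqref{co3.1} has a real gap. Even granting $[T\otimes I+I\otimes T,\N]=0$, expanding gives
\[
\sum_i [T,\phi_i]\otimes\psi_i \;+\; \sum_i \phi_i\otimes [T,\psi_i]\;=\;0,
\]
and you cannot ``split against the independent $\phi_i$'' here: the first sum has $[T,\phi_i]$ in the first slot, and there is no reason these lie in (or are independent from) $\Span\{\phi_1,\dots,\phi_L\}$. So linear independence of the $\phi_i$ does not force $[T,\psi_i]=0$. The paper avoids this by a different route: once \eqref{co3.3} and \eqref{co3.4} are established, each $\phi_i$ and $\psi_i$ is a derivation of the logVA, and one then invokes the general fact (recorded in \cite[Remark~3.15]{BV}) that \emph{every} derivation of a logVA commutes with $T$. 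That is the missing ingredient; with it, \eqref{co3.1} is immediate.
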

\begin{proof} 
\eqref{co3.11} is equivalent to the symmetry of $\N$; see \deref{lm}\eqref{2.4-2}.

\eqref{co3.2} is equivalent to \deref{lm}\eqref{2.4-3}.

\eqref{co3.3} means that all $\phi_i$ are derivations of the $(n+\N)$-products, and is equivalent to the hexagon axiom in Definition \ref{d3.11}. 

\eqref{co3.4} follows from \eqref{co3.3} and the symmetry of $\N$.

\eqref{co3.1} follows from the fact that every derivation of a logVA commutes with $T$; see \cite[Remark 3.15]{BV}.  
\end{proof}

%
%
%
%
%
%
%

\section{Poisson Vertex Algebras as Associated Graded of Logarithmic Vertex Algebras}\label{sec3}

We start this section by introducing the notion of a filtered logarithmic vertex algebra and formulating the main theorem of this work. The rest of the section is devoted to the proof of the theorem. In Section \ref{sec3.2}, we show that the associated graded of a filtered logarithmic vertex algebra is a commutative associative unital differential algebra. In Section \ref{sec3.3}, we prove that the associated graded has an admissible $\lambda$-bracket that satisfies the sesqui-linearity, skew-symmetry, and Leibniz rule. Finally, in Section \ref{sec3.4},  we prove the Jacobi identity for the $\lambda$-bracket.

\subsection{Filtered logVAs and their associated graded}\label{sec3.1}
Let $(V,\vac,T, Y,\N)$ be a logVA, as in \deref{d3.11}.

\begin{definition}\label{fil} 
A \emph{filtration} of a logVA $V$ is an increasing exhaustive filtration by subspaces:
\[\{0\}=\cdots=\F^{-1}V\subset \F^{0}V\subset \F^{1}V\subset \F^{2}V\subset\cdots \,, \quad \bigcup_{n\in\ZZ_+}\F^{n}V=V \,,\]
satisfying the following conditions $(m,n\in\ZZ_+)$:
\begin{enumerate}
\medskip

\item $\vac\in \F^{0}V$;
\medskip

\item\label{1a} $T(\F^{n}V)\subset \F^{n}V$;
\medskip

\item\label{2a} $\N(\F^{m}V\otimes \F^{n}V)\subset \F^{m+n-1}(V\otimes V) := \displaystyle\sum_{k=0}^{m+n-1}\F^{k}V\otimes \F^{m+n-1-k}V$;
\medskip

\item $
    \mu_{(j)}(\F^{m}V\otimes \F^{n}V)\subset 
                \begin{cases}
                  \F^{m+n}V\, ,  & j<0 \,, \\
                  \F^{m+n-1}V\, ,  &  j\geq 0\, .
                \end{cases}
  $
  \end{enumerate}
\smallskip
A \emph{filtered} logVA is a logVA equipped with a filtration satisfying the above properties.
\end{definition}

For a filtered logVA $V$, we consider its associated graded $\gr V$, defined by 
\[\gr V:=\bigoplus_{n\in\ZZ_+} \gr^{n} V\, , \qquad \gr^{n} V:=\F^{n}V/\F^{n-1}V\, . \]
Note that $\gr^n V = \{0\}$ for $n<0$, and we can identify $\gr^0 V = \F^0 V$ since $\F^{-1}V=\{0\}$.
The image of $\vac\in\F^0V$ in $\gr^0V$ will be denoted simply as $1$, because it will be the unit in the algebra $\gr V$, as we will show below.
The maps $T$, $\N$, and $\mu_{(j)}$ $(j\in\ZZ)$ from the logVA structure of $V$ induce corresponding maps on $\gr V$, which will be denoted as
$\partial$, $\N$, and $\mu_{(j)}$, respectively.
In more details, we have the following induced linear maps:

\begin{enumerate}
\medskip
\item\label{1} $\partial(\gr^{n} V)\subset \gr^{n} V$;

\medskip
\item\label{2} $\N(\gr^{m} V\otimes \gr^{n} V)\subset \gr^{m+n-1} (V\otimes V) := \displaystyle\bigoplus_{k=0}^{m+n-1}\gr^{k} V\otimes \gr^{m+n-1-k}V$;

\medskip
\item\label{3} $
    \mu_{(j)}(\gr^{m}V\otimes \gr^{n}V)\subset
                \begin{cases}
                  \gr^{m+n}V\, ,  & j<0 \,, \\
                  \gr^{m+n-1}V\, ,  &  j\geq 0\, .
                \end{cases}
  $
\medskip
  \end{enumerate}
  

In particular, from these properties we derive the following useful consequence of \prref{co}.

\begin{corollary}\label{cor} 
For any filtered logVA\/ $V$, we have the following relations in\/ $\gr V$ $(a,b\in \gr V, \, n\in\ZZ){:}$
\begin{enumerate}

\item  \label{2cor1}
$\mu_{(-1)}(1 \otimes a)=\mu_{(-1)}(a\otimes 1)=\/a;$
\smallskip

\item \label{2-1cor5}
 $\mu_{(0)}(\partial a \otimes b)=-\mu_{(-1)}(\N (a \otimes b));$
\smallskip

\item \label{2cor5}
$\mu_{(n)}(\partial a \otimes b)=-n\mu_{(n-1)}(a \otimes b)$ \; for \; $n\ne 0;$
\smallskip

\item \label{2cor6}
$\partial\,\mu_{(n)}(a\otimes b)=\mu_{(n)}(\partial a \otimes b)+\mu_{(n)}(a\otimes \partial b);$
\smallskip

\item \label{2cor7}
$\mu_{(n)}( a\otimes b)=-(-1)^{p_{a,b}} \displaystyle\sum_{j\in\ZZ_+}(-1)^{j+n}\frac{1}{j!}\partial^{j}\mu_{(n+j)}(b\otimes a)$.
\end{enumerate}
\end{corollary}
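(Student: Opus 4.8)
The plan is to derive each of the five relations in \coref{cor} directly from the corresponding statement in \prref{co} by passing to the associated graded, using the filtration properties in \deref{fil} together with items \eqref{1}--\eqref{3} above to control which terms survive and which drop into a lower filtration piece. For a homogeneous $\bar a = a + \F^{m-1}V \in \gr^m V$ and $\bar b = b + \F^{k-1}V \in \gr^k V$, each identity in \prref{co} is an equality in $V$; I would reduce it modulo the appropriate $\F^{\bullet}V$ and observe that the induced maps $\partial$, $\N$, $\mu_{(j)}$ on $\gr V$ are precisely the ones appearing in \eqref{1}--\eqref{3}, so the reductions make sense.

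First I would do \eqref{2cor1}: this is immediate from \prref{co}\eqref{cor2.1}, since $\mu_{(-1)}(\vac\otimes a) = a$ holds already in $V$ and descends verbatim (with $\vac$ reducing to $1\in\gr^0V$), using $\mu_{(-1)}(\F^0V\otimes\F^nV)\subset\F^nV$ so that the product lands in the right graded piece. Next, \eqref{2cor6} is just \prref{co}\eqref{cor2.3}, which is an exact identity with no sum: reducing mod $\F^{m+k-2}V$ for $n\ge0$ (resp.\ $\F^{m+k-1}V$ for $n<0$) and using $T(\F^nV)\subset\F^nV$ gives that $\partial$ is a derivation of each $\mu_{(n)}$ on $\gr V$; here both sides of the original identity already lie in the same filtration degree, so nothing is lost. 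Similarly \eqref{2cor7} follows from \prref{co}\eqref{cor2.4}: the right side is an infinite sum, but for fixed $a,b$ it is finite since $\mu_{(n+j)}(b\otimes a)=0$ for $j\gg0$, and each term $T^j\mu_{(n+j)}(b\otimes a)$ stays in the same filtration degree as $\mu_{(n)}(a\otimes b)$ by property \eqref{1a} and property \eqref{3}, so the identity descends term by term.

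The interesting cases are \eqref{2-1cor5} and \eqref{2cor5}, which both come from \prref{co}\eqref{cor2.2}, the relation $\mu_{(n)}(Ta\otimes b) = -n\,\mu_{(n-1)}(a\otimes b) - \mu_{(n-1)}(\N(a\otimes b))$. For $\bar a\in\gr^mV$, the left side and the term $-n\,\mu_{(n-1)}(a\otimes b)$ live in filtration degree $m+k-1$ when $n\ge1$ (by property \eqref{3}, since $n-1\ge0$), whereas $\mu_{(n-1)}(\N(a\otimes b))$ lies in degree $m+k-2$ by property \eqref{2a} combined with property \eqref{3}, hence is killed in $\gr^{m+k-1}V$; this yields \eqref{2cor5} for $n\ge1$. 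For $n\le-1$ everything lies in degree $m+k-1$, and now the coefficient $-n\ne0$ together with $n-1\le-2<0$ puts $\mu_{(n-1)}(a\otimes b)$ in degree $m+k$; but the left side $\mu_{(n)}(Ta\otimes b)$ is in degree $m+k-1$, so to get a valid identity in $\gr V$ one must be careful — here the correct reading is that the identity in $V$ still holds, and reducing mod $\F^{m+k-1}V$ on the degree-$(m+k)$ side... I would instead note that for $n<0$ the operator $\mu_{(n)}(\partial(\cdot)\otimes\cdot)$ maps $\gr^mV\otimes\gr^kV$ to $\gr^{m+k}V$ (since $n$ stays negative and degree goes up by $m+k$ under $\mu_{(n)}$ while $\partial$ preserves degree), and likewise $-n\,\mu_{(n-1)}$ maps into $\gr^{m+k}V$; so reducing \prref{co}\eqref{cor2.2} modulo $\F^{m+k-1}V$ is consistent and the $\N$-term, landing in $\F^{m+k-2}V$ by property \eqref{2a}, again drops out, giving \eqref{2cor5} for $n<0$ as well. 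Finally, the case $n=0$: here the left side $\mu_{(0)}(Ta\otimes b)$ lies in degree $m+k-1$, the would-be $-n\,\mu_{(n-1)}$ term vanishes identically, and $\mu_{(-1)}(\N(a\otimes b))$ lies in degree $(m-1)+(k)+\cdots = m+k-1$ as well (since $\N$ drops degree by one and $\mu_{(-1)}$ preserves total degree), so this term survives, yielding \eqref{2-1cor5}.

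The main obstacle, and the one point requiring genuine care rather than routine bookkeeping, is exactly the $n\ge1$ case of \eqref{2cor5} versus the $n=0$ case of \eqref{2-1cor5}: one must verify that $\N$ lowers the filtration degree by precisely one (property \eqref{2a}) so that $\mu_{(n-1)}(\N(a\otimes b))$ sits one degree below the rest when $n\ge1$ and thus vanishes in $\gr V$, but sits in the \emph{same} degree as the surviving term when $n=0$ because then the other term $-n\,\mu_{(n-1)}(a\otimes b)$ is absent and $\mu_{(0)}(Ta\otimes b)$ has already used up the degree drop built into $\mu_{(\ge0)}$. In short, everything follows from carefully tracking the degree shifts $+0$ or $-1$ contributed by each of $T$ (degree $0$), $\N$ (degree $-1$), and $\mu_{(j)}$ (degree $0$ for $j<0$, degree $-1$ for $j\ge0$), and discarding any term that falls strictly below the target graded piece.
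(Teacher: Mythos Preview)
Your approach is correct and matches the paper's, which merely states the corollary as an immediate consequence of \prref{co} together with the filtration axioms and gives no further proof; your degree bookkeeping for parts \eqref{2-1cor5} and \eqref{2cor5} is exactly the right way to see why the $\N$-term survives at $n=0$ and drops for $n\ne 0$. One small correction in your treatment of \eqref{2cor7}: for $n<0$ and $j\ge -n$ the term $T^{j}\mu_{(n+j)}(b\otimes a)$ lands in $\F^{m+k-1}V$, one step \emph{below} rather than in the same degree as the left side, so ``stays in the same filtration degree'' is not literally true there --- but since the identity in $V$ is exact, reducing modulo $\F^{m+k-1}V$ still gives the claimed equality in $\gr^{m+k}V$, and in any case the paper only invokes \eqref{2cor7} for $n\ge 0$, where every term genuinely lives in $\gr^{m+k-1}V$.
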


Starting from a filtered logVA $V$,
we define a $\lambda$-bracket on $\gr V$ as follows ($a,b\in\gr V$):
\begin{equation}\label{eq2.1}
\{a_{\lambda}b\}:=[a_{\lambda}b]
+\mu_{(-1)}\Bigl(\N\, \Bigl(\frac{1}{\lambda+\partial }\,  a\otimes b\Bigr)\Bigr)\,,
\end{equation}
where
\begin{equation}\label{eq2.1b}
[a_{\lambda}b]:=\sum_{n\in\ZZ_+}\frac{\lambda^{n}}{n!}\mu_{(n)}(a\otimes b)\in  (\gr V)[\lambda]
\end{equation}
is similar to the usual $\lambda$-bracket for Lie conformal algebras.
Note that, due to the expansion convention \eqref{e2.9e} and the above properties \eqref{1}--\eqref{3}, we have:
\begin{equation}\label{eq2.1a}
\{a_{\lambda}b\} \in (\gr^{m+n-1} V) (\!(\lambda^{-1})\!) \quad\text{for}\quad a\in\gr^m V \,, \; b\in\gr^n V \,.
\end{equation}

Now we can state the main result of the paper.

\begin{theorem}\label{thm3.3}
For any filtered logarithmic vertex algebra\/ $V$, the above defined\/ $(\gr {V},1, \mu_{(-1)} , \partial ,\{ \cdot_{\lambda}\cdot\})$ is a non-local Poisson vertex algebra.
\end{theorem}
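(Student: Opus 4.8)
The plan is to verify the three axioms in Definition~\ref{def2.9} one at a time, building on the structural facts already collected in Corollary~\ref{cor}. The overall strategy mirrors the classical passage from a filtered associative algebra to a Poisson algebra, and from a filtered vertex algebra to a Poisson vertex algebra, but with the braiding map $\N$ contributing the negative-power-of-$\lambda$ tail of the bracket \eqref{eq2.1}.

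First I would establish axiom (2), that $(\gr V, 1, \partial, \mu_{(-1)})$ is a commutative associative unital differential algebra. Unitality is Corollary~\ref{cor}\eqref{2cor1}. Commutativity follows from the skew-symmetry relation Corollary~\ref{cor}\eqref{2cor7} with $n=-1$: the sum over $j\ge 1$ raises the $\mu$-index to $\ge 0$, which drops the filtration degree by an extra unit, so in $\gr V$ only the $j=0$ term survives and $\mu_{(-1)}(a\otimes b) = (-1)^{p_{a,b}}\mu_{(-1)}(b\otimes a)$. That $\partial$ is a derivation of $\mu_{(-1)}$ is Corollary~\ref{cor}\eqref{2cor6} at $n=-1$. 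Associativity is the one genuinely vertex-algebraic point: I would specialize the Borcherds identity \eqref{borcherds2} with $m=k=-1$ and a suitable $n$ (e.g.\ $n=0$), then pass to $\gr V$, where all terms with a nonnegative $\mu$-index or a surviving power of $\N$ lower the total degree and hence vanish. What is left is exactly $\mu_{(-1)}(\mu_{(-1)}\otimes I) = \mu_{(-1)}(I\otimes \mu_{(-1)})$ up to sign, i.e.\ associativity of the commutative product. (This is the content the authors attribute to Section~\ref{sec3.2}.)

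Next, axiom (1): $(\gr V, \partial, \{\cdot_\lambda\cdot\})$ is a non-local Lie conformal algebra. Admissibility and the non-local nature of the bracket are guaranteed by \eqref{eq2.1a}, and by Remark~\ref{rem2.1} it suffices to check \eqref{adm1} once skew-symmetry is known. Sesqui-linearity of $\{\cdot_\lambda\cdot\}$ reduces, via the definition \eqref{eq2.1}, \eqref{eq2.1b} and the expansion convention \eqref{e2.9e}, to Corollary~\ref{cor}\eqref{2cor5}, \eqref{2-1cor5}, and \eqref{2cor6} together with the fact (from Proposition~\ref{co3}\eqref{co3.1}) that $\partial$ commutes with the components $\phi_i,\psi_i$ of $\N$; the $\mu_{(-1)}(\N(\tfrac{1}{\lambda+\partial}a\otimes b))$ term has to be manipulated carefully but each manipulation is one of these identities. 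Skew-symmetry $\{b_\lambda a\} = -(-1)^{p_{a,b}}\{a_{-\lambda-\partial}b\}$ is proved by combining Corollary~\ref{cor}\eqref{2cor7} with the symmetry of $\N$ (Proposition~\ref{co3}\eqref{co3.11}) and the derivation property Proposition~\ref{co3}\eqref{co3.3}--\eqref{co3.4}: the polynomial part reproduces the usual Lie-conformal skew-symmetry computation, and the $\N$-tail matches because $\N$ is symmetric and its $\phi_i,\psi_i$ move through $\mu_{(-1)}$ and $T$. Then the Leibniz rule (3) follows from Borcherds with $m\ge 0$, $k=-1$ and from the hexagon/derivation property of $\N$: in $\gr V$, $\mu_{(m)}(a\otimes \mu_{(-1)}(b\otimes c))$ splits as $\mu_{(-1)}(\mu_{(m)}(a\otimes b)\otimes c) \pm \mu_{(-1)}(b\otimes\mu_{(m)}(a\otimes c))$ plus $\N$-correction terms, and Proposition~\ref{co3}\eqref{co3.3} shows the $\N$-terms assemble into $\mu_{(-1)}(\N(\tfrac1{\lambda+\partial}a\otimes b))c + b\,\mu_{(-1)}(\N(\tfrac1{\lambda+\partial}a\otimes c))$; collecting gives $\{a_\lambda bc\} = \{a_\lambda b\}c + (-1)^{p_{a,b}}b\{a_\lambda c\}$.

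The hard part will be the Jacobi identity (the authors devote all of Section~\ref{sec3.4} to it). The plan is to start from the full Borcherds identity \eqref{borcherds2} on $V^{\otimes 3}$, multiply by the appropriate formal series $\tfrac{\lambda^m}{m!}\tfrac{\mu^k}{k!}$, sum over $m,k\in\ZZ$, and reorganize the three resulting sums into $\{a_\lambda\{b_\mu c\}\}$, $\{b_\mu\{a_\lambda c\}\}$, and $\{\{a_\lambda b\}_{\lambda+\mu}c\}$ after passing to $\gr V$. The three geometric-series expansions of $(\lambda+\mu)^{-1}$ in \eqref{iota} must be matched precisely against the three binomial coefficients $\binom{n+\N_{12}}{j}$ etc.\ appearing in \eqref{borcherds2}; this is exactly where the factors $(-1)^n$ and the shifts $m\mapsto m+k-j$ have to conspire correctly, and where one uses $[\N_{12},\N_{13}] = [\N_{12},\N_{23}] = [\N_{13},\N_{23}] = 0$ (Proposition~\ref{co3}\eqref{co3.2}) to treat the $\N_{ij}$ as commuting scalars inside the expansions. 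One also needs that in $\gr V$ the only surviving contributions are those of top total degree, which kills most cross-terms; what remains should be, term by term, the non-local Jacobi identity $\{a_\lambda\{b_\mu c\}\} = \{\{a_\lambda b\}_{\lambda+\mu}c\} + (-1)^{p_{a,b}}\{b_\mu\{a_\lambda c\}\}$ as an identity in $\gr V_{\lambda,\mu}$. I expect the bookkeeping of the $\N$-tails — ensuring that the $\lambda^{-1}$, $\mu^{-1}$ and $(\lambda+\mu)^{-1}$ pieces land in the correct $\iota$-image and agree after re-expansion — to be the main obstacle, rather than any single conceptual difficulty.
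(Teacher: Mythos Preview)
Your outline for the commutative associative differential structure, for sesqui-linearity, skew-symmetry, and the Leibniz rule is essentially the paper's own argument (with one harmless variation: you deduce commutativity of $\mu_{(-1)}$ from the quasi-symmetry formula Corollary~\ref{cor}\eqref{2cor7} at $n=-1$, whereas the paper extracts it from Borcherds at $n=0$, $m=k=-1$; both work). One small gap: admissibility is not a consequence of \eqref{eq2.1a} alone. That inclusion concerns a single bracket; for \eqref{adm1} you must expand $\{a_\lambda\{b_\mu c\}\}$ using the Leibniz rule and sesqui-linearity, as in the paper's Proposition~\ref{propadm}, to see that the three resulting pieces lie respectively in $\V[[\lambda^{-1}]][\lambda,\mu]$, $\iota_{\mu,\lambda}\V[[\lambda^{-1},(\lambda+\mu)^{-1}]][\lambda]$, and $\V[[\lambda^{-1},\mu^{-1}]][\lambda]$.

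The genuine gap is in your plan for Jacobi. Summing Borcherds with weights $\lambda^{(m)}\mu^{(k)}$ over $m,k\in\ZZ$ is not well-posed (divided powers are undefined for negative exponents), and restricting to $m,k\ge0$ cannot produce the full non-local Jacobi identity: the negative powers of $\lambda$ in $\{a_\lambda b\}$ arise from the $\N$-tail $\mu_{(-1)}\bigl(\N\bigl(\frac{1}{\lambda+\partial}a\otimes b\bigr)\bigr)$, not from any $\mu_{(m)}$ with $m<0$. The paper's strategy is qualitatively different. It introduces an explicit direct-sum decomposition $\V_1\oplus\cdots\oplus\V_7\subset\V_{\lambda,\mu}$ indexed by which of $\lambda^{-1}$, $\mu^{-1}$, $(\lambda+\mu)^{-1}$ actually occur, computes the projection of each of the three double brackets onto each $\V_k$ (Lemmas~\ref{lemproof1}--\ref{lemproof3}), and then checks Jacobi summand by summand. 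The Borcherds identity is invoked \emph{only} for the polynomial piece $\V_1$ (Lemma~\ref{lemproof4}), and even there the $\pi_1$-projection of each double bracket already carries nontrivial correction terms coming from the $\N$-tails; matching those corrections against the degree $-2$ part of Borcherds at $n=0$, $m,k\ge0$ requires the binomial identities of Lemma~\ref{2lm}. The pieces $\V_2,\V_3,\V_4$ (one $\N$-tail) reduce purely to the fact that $\phi_i,\psi_i$ are derivations of $[\cdot_\lambda\cdot]$; the pieces $\V_5,\V_6,\V_7$ (two $\N$-tails) reduce purely to $[\phi_i,\phi_j]=[\phi_i,\psi_j]=[\psi_i,\psi_j]=0$ and the symmetry of $\N$. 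The double-$\N$ pieces in particular have no Borcherds provenance at all, so a plan that tries to read everything off a single generating-function form of Borcherds will not close. You need either this seven-fold decomposition or some equivalent organizing principle that separates the polynomial, single-$\N$, and double-$\N$ contributions.
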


The rest of this section is devoted to the proof of the theorem. We verify each of the axioms in the definition of a non-local PVA (Definition \ref{def2.9}), as follows.
In 
Proposition \ref{pro3.5}, we show that $(\gr V, \partial, 1 , \mu_{(-1)})$ is a commutative associative unital differential algebra.
In 
Propositions \ref{propbrack}--\ref{propadm}, we prove that the $\la$-bracket $\{\cdot_{\lambda}\cdot\}$ satisfies the sesqui-linearity, skew-symmetry, Leibniz rule, and is admissible.
In 
Proposition \ref{pro3.11}, we prove that $\{\cdot_{\lambda}\cdot\}$ satisfies the Jacobi identity.

Recall that we write the braiding map $\N$ as in \eqref{logf-n} in terms of endomorphisms $\phi_{i}, \psi_{i}\in \End(V)$,  $1\leq i\leq L$.
As we do not assume that $\phi_{i}$ and $\psi_{i}$ preserve the filtration of $V$, in general they do not induce maps on the associated graded $\gr V$.
However, notice that they always come together because all formulas involve the braiding map $\N$, which does descend to $\gr V$.
For convenience, from now on we will continue to use $\phi_{i}$ and $\psi_{i}$ in identities on $\gr V$, with the understanding that
$\N = \sum \phi_i \otimes \psi_i$ is defined there.
With this notation, both
\[
\sum_{i=1}^L (-1)^{p_{i,a}} \phi_i a \otimes \psi_i b = \N(a \otimes b)
\]
and
\[
\sum_{i=1}^L (-1)^{p_{i,a}} (\phi_i a) (\psi_i b) = \mu_{(-1)}\bigl(\N(a \otimes b)\bigr)
\]
make sense for $a,b\in\gr V$, and the results of \prref{co3} still apply (with $\partial$ in place of $T$ in part \eqref{co3.1}).
Hence, by \eqref{logf-nact}, we can rewrite \eqref{eq2.1} in the convenient form
\begin{equation}\label{eq3.4}
\{a_{\lambda}b\}:=[a_{\lambda}b] + \sum_{i=1}^{L} (-1)^{p_{i,a}} \Bigl(\frac{1}{\lambda+\partial }\, \phi_{i} a\Bigr)(\psi_{i} b)
\end{equation}
for $a,b\in\gr V$. Expanding $(\lambda+\partial)^{-1}$ as in \eqref{e2.9e} and using \coref{cor}\eqref{2cor5}, we also have
\begin{equation}
\mu_{(-1)}\Bigl(\N\, \Bigl(\frac{1}{\lambda+\partial }\,  a\otimes b\Bigr)\Bigr)=\sum_{n\in\ZZ_+}(-1)^{n}n!{\lambda^{-n-1}}\mu_{(-n-1)}\bigl(\N(a\otimes b)\bigr).
\end{equation}


\subsection{Commutative associative unital differential superalgebra}\label{sec3.2}

\begin{proposition}\label{pro3.5} 
Let\/ $V$ be a filtered logVA. Then $(\gr V, \partial, 1 , \mu_{(-1)})$ is a commutative associative unital differential superalgebra.
\end{proposition}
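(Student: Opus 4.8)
The plan is to transfer the corresponding structure from $V$ to $\gr V$ degree by degree, checking that the defining identities hold at the level of the associated graded. The four things to verify are: (i) $\mu_{(-1)}$ is well-defined on $\gr V$ and has degree $0$; (ii) $1$ is a two-sided unit; (iii) $\partial$ is a derivation of $\mu_{(-1)}$; (iv) $\mu_{(-1)}$ is commutative and associative. Items (i) and (iii) are essentially automatic: (i) is precisely property \eqref{3} of the induced maps with $j=-1$ (so $\mu_{(-1)}(\gr^m V\otimes\gr^n V)\subset\gr^{m+n}V$), and well-definedness is the standard check that the product does not depend on the choice of representatives, which follows from property (4) of \deref{fil} in the case $j=-1$ combined with $j\ge 0$ for the lower-order terms. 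Item (iii) is \coref{cor}\eqref{2cor6} specialized to $n=-1$.

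First I would dispose of the unit: \coref{cor}\eqref{2cor1} states $\mu_{(-1)}(1\otimes a)=\mu_{(-1)}(a\otimes 1)=a$ for all $a\in\gr V$, which is exactly the unit axiom, and $1\in\gr^0 V$ has degree $0$ so this is compatible with the grading. Next, commutativity: I would apply the skew-symmetry relation \coref{cor}\eqref{2cor7} with $n=-1$. That identity reads
\[
\mu_{(-1)}(a\otimes b)=-(-1)^{p_{a,b}}\sum_{j\in\ZZ_+}(-1)^{j-1}\frac{1}{j!}\partial^j\mu_{(-1+j)}(b\otimes a)\,.
\]
Now I use the key degree-counting observation: for $a\in\gr^m V$ and $b\in\gr^n V$, the term with $j\ge 1$ involves $\mu_{(j-1)}(b\otimes a)$ with $j-1\ge 0$, which by property \eqref{3} lands in $\gr^{m+n-1}V$, hence in degree strictly less than $m+n$; but $\mu_{(-1)}(a\otimes b)$ and $\mu_{(-1)}(b\otimes a)$ both live in $\gr^{m+n}V$. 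Projecting onto the top degree $\gr^{m+n}V$ kills all $j\ge 1$ terms, leaving only the $j=0$ summand $(-1)^{p_{a,b}}\mu_{(-1)}(b\otimes a)$, so $\mu_{(-1)}(a\otimes b)=(-1)^{p_{a,b}}\mu_{(-1)}(b\otimes a)$, i.e.\ graded-commutativity.

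The main obstacle will be associativity, since the Borcherds identity \eqref{borcherds2} for logVAs is considerably more complicated than the classical one and carries the braiding-map corrections $\binom{n+\N_{12}}{j}$ and $\binom{m+\N_{13}}{j}$. The strategy is to specialize \eqref{borcherds2} to $m=k=n=-1$ (or, following the standard vertex-algebra argument, a suitable choice of indices producing $\mu_{(-1)}(\mu_{(-1)}\otimes I)$ versus $\mu_{(-1)}(I\otimes\mu_{(-1)})$ terms), and then pass to the associated graded. The crucial point is again degree counting: on the right-hand side of \eqref{borcherds2}, the sum $\sum_j\mu_{(m+k-j)}(\mu_{(n+j)}\otimes I)\binom{m+\N_{13}}{j}$ has, for $m=-1$, only the $j=0$ term $\mu_{(k-1)}(\mu_{(n)}\otimes I)$ contributing in top degree --- the binomial $\binom{-1+\N_{13}}{j}$ for $j\ge 1$ either vanishes or produces a factor of $\N_{13}$, which lowers the filtration degree by $\deref{fil}\eqref{2a}$, and likewise each application of $\mu_{(\ge 0)}$ lowers degree by one. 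The same analysis applied to the two terms on the left-hand side, keeping only the top-degree pieces and recognizing the transposition $P_{12}$ contributes the sign $(-1)^{p_{a,b}}$, yields precisely graded associativity $\mu_{(-1)}(\mu_{(-1)}(a\otimes b)\otimes c)=\mu_{(-1)}(a\otimes\mu_{(-1)}(b\otimes c))$ after using graded-commutativity to match the two sides. I would carry out this degree-truncation of the Borcherds identity carefully, as it is the one genuinely non-routine computation; everything else is bookkeeping with the already-established \coref{cor}.
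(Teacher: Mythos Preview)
Your treatment of the unit and the derivation property matches the paper exactly. Your commutativity argument is correct but takes a different route: the paper extracts commutativity from the Borcherds identity \eqref{borcherds2} with $n=0$, $m=k=-1$ (degree-truncating to obtain $\mu_{(-1)}(I\otimes\mu_{(-1)})=\mu_{(-1)}(I\otimes\mu_{(-1)})P_{12}$), whereas you use the skew-symmetry relation \coref{cor}\eqref{2cor7} and discard the $j\ge1$ terms on degree grounds. Your approach is shorter and perfectly valid; the paper's has the virtue of treating commutativity and associativity uniformly via Borcherds.

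For associativity, however, your proposed specialization $m=k=n=-1$ does not work. With those values the degree-$0$ part of \eqref{borcherds2} reads
\[
\mu_{(-2)}(I\otimes\mu_{(-1)}) + \mu_{(-2)}(I\otimes\mu_{(-1)})P_{12} = \mu_{(-2)}(\mu_{(-1)}\otimes I)\,,
\]
a relation for $\mu_{(-2)}$, not for $\mu_{(-1)}$. (Your own formula ``$\mu_{(k-1)}(\mu_{(n)}\otimes I)$'' already shows this: with $k=-1$ you get $\mu_{(-2)}$.) The paper instead takes $m=0$, $n=k=-1$. The point of setting $m=0$ is that on the right-hand side the binomial becomes $\binom{\N_{13}}{j}$, whose constant term $\binom{0}{j}$ vanishes for $j\ge1$, so only $j=0$ survives in top degree and yields $\mu_{(-1)}(\mu_{(-1)}\otimes I)$; on the left, the first sum gives $\mu_{(-1)}(I\otimes\mu_{(-1)})$ and the second sum has $\mu_{(j)}$ with $j\ge0$ throughout, hence contributes nothing in top degree. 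Your general strategy of degree-truncating Borcherds is the right one, but the indices must be chosen so that the surviving top-degree pieces are precisely the two associativity terms.
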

\begin{proof} 
From part \eqref{2cor1} of \coref{cor}, we have that $1$ is a unity of $\mu_{(-1)}$, and from part \eqref{2cor6} we obtain that $\partial$ satisfies the Leibniz rule.

To prove the commutativity of $\mu_{(-1)}$,
we apply the Borcherds identity \eqref{borcherds2} for $n=0$ and $m=k=-1$. 
Recall that as maps on the associated graded, $\N$ and $\mu_{(r)}$ for $r\ge0$ have degree $-1$, and $\mu_{(r)}$ for $r<0$ have degree $0$.
Hence, all terms of \eqref{borcherds2} induce corresponding maps on $(\gr V)^{\otimes 3}$. 
If we restrict them to $\gr^l(V^{\otimes 3})$, then the maps of degree $0$ that send $\gr^l(V^{\otimes 3})$ to $\gr^l V$
reduce to the following:
\begin{align*}
\sum_{j\in \ZZ_{+}}(-1)^{j}\mu_{(-1-j)}(I\otimes \mu_{(-1+j)})\binom{\N_{12}}{j} &=\mu_{(-1)}(I\otimes \mu_{(-1)})\, ,\\
-\sum_{j\in \ZZ_{+}}(-1)^{j}\mu_{(-1-j)}(I\otimes \mu_{(-1+j)}) \binom{\N_{12}}{j} P_{12} &=-\mu_{(-1)}(I\otimes \mu_{(-1)}) {P_{12}}\, ,\\
\sum_{j\in \ZZ_{+}} \mu_{(-2-j)}( \mu_{(j)}\otimes I) \binom{-1+\N_{13}}{j} &=0\, .
\end{align*}
Therefore, \eqref{borcherds2} implies the commutativity 
\begin{align*}
\mu_{(-1)}(I\otimes \mu_{(-1)})-\mu_{(-1)}(I\otimes \mu_{(-1)})P_{12}=0\, .
\end{align*}

Similarly, we use \eqref{borcherds2} for $m=0$, $n=k=-1$, and note that when applied to $\gr^l(V^{\otimes 3})$, the terms of degree $0$ reduce to:
\begin{align*}
\sum_{j\in \ZZ_{+}}(-1)^{j}\mu_{(-1-j)}(I\otimes \mu_{(-1+j)})\binom{-1+\N_{12}}{j} &=\mu_{(-1)}(I\otimes \mu_{(-1)})\, ,\\
-\sum_{j\in \ZZ_{+}}(-1)^{j}\mu_{(-2-j)}(I\otimes \mu_{(j)}) \binom{-1+\N_{12}}{j} P_{12} &=0\, ,\\
\sum_{j\in \ZZ_{+}} \mu_{(-1-j)}( \mu_{(-1+j)}\otimes I) \binom{\N_{13}}{j} &= \mu_{(-1)}( \mu_{(-1)}\otimes I)\, .
\end{align*}
This implies the associativity 
\begin{equation*}
\mu_{(-1)}(I\otimes \mu_{(-1)}) =\mu_{(-1)}( \mu_{(-1)}\otimes I) \,,
\end{equation*}
thus completing the proof of the proposition.
\end{proof}

\subsection{Properties of the $\lambda$-bracket}\label{sec3.3}

In this section, we study the properties of the $\lambda$-bracket defined by \eqref{eq2.1}.

\begin{proposition}\label{propbrack}
Let\/ $V$ be a filtered logVA. 
Then $(\gr V, \partial , \{\cdot_{\lambda}\cdot\})$ satisfy the sesqui-linearity and skew-symmetry$:$
\begin{itemize}
\item[] (i) \;\,
$\{\partial a_{\lambda}b\}=-\lambda \{a_{\lambda}b\}\,;$
\smallskip

\item[] (ii) \,
$\{a_{\lambda}\partial b\}=(\lambda+\partial)\{a_{\lambda} b\}\,;$
\smallskip

\item[] (iii) \;\,
$ \{b_{\lambda}a\}=-(-1)^{p_{a,b}} \{a_{-\lambda-\partial}b\}\,.$
\end{itemize}
\end{proposition}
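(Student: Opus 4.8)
The plan is to verify the three identities directly from the definition \eqref{eq2.1} (equivalently \eqref{eq3.4}) of the $\lambda$-bracket on $\gr V$, reducing everything to the relations for the $(n+\N)$-products collected in \coref{cor} and the properties of $\N$ in \prref{co3}. I would treat the polynomial part $[a_\lambda b] = \sum_{n\in\ZZ_+}\frac{\lambda^n}{n!}\mu_{(n)}(a\otimes b)$ and the ``non-local'' part $\mu_{(-1)}\bigl(\N(\frac{1}{\lambda+\partial}a\otimes b)\bigr)$ separately, since the polynomial part behaves exactly as in the classical Lie conformal algebra case and only the non-local part needs care with the expansion convention \eqref{e2.9e}.

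For sesqui-linearity in $\partial a$, I would compute $\{\partial a_\lambda b\}$ term by term: for the polynomial part, $\mu_{(n)}(\partial a\otimes b)$ is handled by \coref{cor}\eqref{2cor5} for $n\neq0$, giving $-n\mu_{(n-1)}(a\otimes b)$, while the $n=0$ term $\mu_{(0)}(\partial a\otimes b)$ equals $-\mu_{(-1)}(\N(a\otimes b))$ by \coref{cor}\eqref{2-1cor5}. The leftover $n=0$ contribution then recombines with the $\partial$-derivative of the non-local part: writing $\frac{1}{\lambda+\partial}a = \sum_{k\ge0}(-\partial)^k\lambda^{-k-1}a$, one gets $\mu_{(-1)}\bigl(\N(\frac{\partial}{\lambda+\partial}a\otimes b)\bigr) = \mu_{(-1)}(\N(a\otimes b)) - \lambda\,\mu_{(-1)}\bigl(\N(\frac{1}{\lambda+\partial}a\otimes b)\bigr)$, and the stray $\mu_{(-1)}(\N(a\otimes b))$ cancels against the $n=0$ term, leaving exactly $-\lambda\{a_\lambda b\}$. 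For $\{a_\lambda\partial b\}$, the polynomial part gives $(\lambda+\partial)[a_\lambda b]$ in the usual way (using \coref{cor}\eqref{2cor6} to trade $\mu_{(n)}(a\otimes\partial b)$ for $\partial\mu_{(n)}(a\otimes b)-\mu_{(n)}(\partial a\otimes b)$ and reindexing), and for the non-local part one uses that $\psi_i$ is a derivation of $\mu_{(-1)}$ together with \coref{cor}\eqref{2cor6} and \prref{co3}\eqref{co3.1} ($[\partial,\psi_i]=0$) to move the $\partial$ past and produce the factor $(\lambda+\partial)$; this is the calculation where one must be careful that $\lambda+\partial$ on the left of the non-local term means expansion as in \eqref{e2.9e} acting after $\mu_{(-1)}$.

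Skew-symmetry is the most delicate of the three, so I expect it to be the main obstacle. The identity $\mu_{(n)}(a\otimes b) = -(-1)^{p_{a,b}}\sum_{j\in\ZZ_+}(-1)^{j+n}\frac{1}{j!}\partial^j\mu_{(n+j)}(b\otimes a)$ from \coref{cor}\eqref{2cor7} holds for all $n\in\ZZ$, including $n=-1$, so it simultaneously controls the polynomial part and the non-local part. For the polynomial part, substituting into $[a_\lambda b]$ and collecting powers of $\lambda$ reproduces $-(-1)^{p_{a,b}}$ times the $\lambda^{\ge0}$-part of $\{b_{-\lambda-\partial}a\}$ exactly as in \cite{K1}. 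The subtlety is that $\{b_{-\lambda-\partial}a\}$ in \eqref{e2.9} is the image under $\iota_{\mu,\lambda}$-type expansion of a Laurent series, so one has to check that the negative powers of $\lambda$ coming from applying \eqref{e2.9} to the non-local part of $\{b_\lambda a\}$ — i.e.\ to $\sum_i(-1)^{p_{i,b}}(\frac{1}{\lambda+\partial}\phi_i b)(\psi_i a)$ — match, after using the symmetry of $\N$ (\prref{co3}\eqref{co3.11}), the commutativity of $\mu_{(-1)}$ just proved in \prref{pro3.5}, and \coref{cor}\eqref{2cor7} at $n=-1$, the non-local part $\mu_{(-1)}\bigl(\N(\frac{1}{\lambda+\partial}a\otimes b)\bigr)$ of $\{a_\lambda b\}$. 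I would organize this by applying \eqref{e2.9} formally: replace each $\lambda$ in $\{b_\lambda a\}$ by $-\lambda-\partial$ with the prescribed expansion, push the resulting $\partial^k$'s onto the arguments using that $\partial$ is a derivation of $\mu_{(-1)}$ and commutes with all $\psi_i$, and then re-sum; the polynomial and non-local pieces of $\{a_\lambda b\}$ should emerge respectively from the $n\ge0$ and $n=-1$ instances of \coref{cor}\eqref{2cor7}. Throughout, the parity signs must be tracked carefully — in particular $(-1)^{p_{a,b}}$ versus $(-1)^{p_{i,a}}$ — but no new structural input beyond \coref{cor} and \prref{co3} is needed.
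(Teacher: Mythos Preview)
Your proposal is correct and follows essentially the same route as the paper's proof: for (i) you split into the polynomial part handled by \coref{cor}\eqref{2-1cor5},\eqref{2cor5} and the non-local part via the identity $\partial/(\lambda+\partial)=1-\lambda/(\lambda+\partial)$, and for (iii) you use \coref{cor}\eqref{2cor7} on the polynomial piece together with the symmetry of $\N$ and the commutativity of $\mu_{(-1)}$ (which is exactly what \coref{cor}\eqref{2cor7} at $n=-1$ gives in $\gr V$) on the non-local piece. The one economy in the paper's argument is that (ii) is not proved directly but is deduced in one line from (i) and (iii).
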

\begin{proof}
In this proof, we use notation \eqref{eq3.4}. $(i)$ Using parts \eqref{2-1cor5} and \eqref{2cor5} of \coref{cor}, we compute: 
\begin{align*}
[\partial a_{\lambda}b]&=\sum_{n\geq 0}\frac{\lambda^{n}}{n!} \mu_{(n)}(\partial a \otimes b) \\
&=-\sum_{n\geq 1}\frac{\lambda^{n}}{(n-1)!} \mu_{(n-1)}(a \otimes b)-\sum_{i=1}^{L} (-1)^{p_{i,a}} (\phi_{i}  a)(\psi_{i} b) \\
&=-\lambda [a_{\lambda}b]-\sum_{i=1}^{L} (-1)^{p_{i,a}} (\phi_{i} a)(\psi_{i} b) \,,
\end{align*}
and
\begin{align*}
\sum_{i=1}^{L} (-1&)^{p_{i,a}} \Bigl(\frac{1}{\lambda+\partial }\, \phi_{i} (\partial a)\Bigr) (\psi_{i} b) \\
&=\sum_{i=1}^{L} (-1)^{p_{i,a}} \Bigl(\frac{(\lambda+\partial)-\lambda}{\lambda+\partial }\, \phi_{i}  a\Bigr) (\psi_{i} b) \,\\
&=\sum_{i=1}^{L} (-1)^{p_{i,a}} \Bigl((\phi_{i} a)(\psi_{i} b) -\lambda\Bigl(\frac{1}{\lambda+\partial }\, \phi_{i}  a\Bigr) (\psi_{i} b) \Bigr) .
\end{align*}
Adding the above two identities proves $(i)$.

$(iii)$ From \coref{cor}\eqref{2cor7} and the binomial formula, we get:
\begin{equation}\label{e3}
\begin{split}
[b_{\lambda}a]&= \sum_{n\geq 0} \frac{\lambda^{n}}{n!} \mu_{(n)}(b \otimes a) \\
&= -(-1)^{p_{a,b}} \sum_{n,j\geq 0}(-1)^{n+j} \frac{\lambda^{n}}{n!} \frac{\partial^{j}}{j!} \mu_{(n+j)}(a\otimes b) \\
&=-(-1)^{p_{a,b}} \sum_{m\geq 0}\frac{(-\lambda-\partial)^{m}}{m!} \mu_{(m)}(a \otimes b) \\
&=-(-1)^{p_{a,b}} [a_{-\lambda-\partial}b]\,.
\end{split}
\end{equation}	
On the other hand, using that $\N$ is even, the symmetry of $\N$ (\prref{co3}\eqref{co3.11}), 
the commutativity of the product $\mu_{(-1)}$ (\prref{pro3.5}),
and the Leibniz rule for the action of $\partial$, we have:
\allowdisplaybreaks
\begin{align*}
\sum_{i=1}^{L} (-1&)^{p_{i,b}} \Bigl(\frac{1}{\lambda+\partial }\,\phi_{i} b\Bigr) (\psi_{i} a) \\
&= \sum_{i=1}^{L} (-1)^{p_{i,b}+p_{\psi_i a,\phi_i b}} (\psi_{i} a)\Bigl(\frac{1}{\lambda+\partial }\, \phi_{i} b\Bigr) \\
&=\sum_{i=1}^{L} (-1)^{p_{a,b}+p_{i,a}} (\phi_{i} a)\Bigl(\frac{1}{\lambda+\partial }\, \psi_{i} b\Bigr)\\
&=-(-1)^{p_{a,b}} \sum_{i=1}^{L} (-1)^{p_{i,a}} \Bigl(\Bigl(\frac{1}{\mu+\partial }\, \phi_{i}  a\Bigr)\psi_{i} b\Bigr)\Big|_{\mu=-\lambda-\partial} \,.
\end{align*}	
Adding this equation and \eqref{e3}, we obtain the skew-symmetry $(iii)$.

Finally, $(ii)$ follows from $(i)$ and $(iii)$.
\end{proof}

\begin{proposition}\label{pro3.8} 
Let\/ $V$ be a filtered logVA. Then $(\gr V,  \{\cdot_{\lambda}\cdot\}, \cdot)$ satisfy the Leibniz rule$:$
\[\{a_{\lambda}bc\}=\{a_{\lambda}b\}c+(-1)^{p_{a,b}}b\{a_{\lambda}c\}\,.\]
\end{proposition}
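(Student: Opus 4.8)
The plan is to prove the Leibniz rule $\{a_\la bc\} = \{a_\la b\}c + (-1)^{p_{a,b}} b\{a_\la c\}$ by splitting the $\la$-bracket \eqref{eq3.4} into its polynomial part $[a_\la b] = \sum_{n\ge0}\frac{\la^n}{n!}\mu_{(n)}(a\otimes b)$ and its ``non-local'' part $\sum_i (-1)^{p_{i,a}}(\frac{1}{\la+\d}\phi_i a)(\psi_i b)$, and verifying the Leibniz rule for each part separately. For the polynomial part, I would extract the coefficient of $\la^n/n!$, reducing the claim to the statement that each $(n+\N)$-product $\mu_{(n)}$ with $n\ge0$ is a derivation in its second argument on $\gr V$, i.e.\ $\mu_{(n)}(a\otimes\mu_{(-1)}(b\otimes c)) = \mu_{(-1)}(\mu_{(n)}(a\otimes b)\otimes c) + (-1)^{p_{a,b}}\mu_{(-1)}(b\otimes\mu_{(n)}(a\otimes c))$. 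This should follow from the Borcherds identity \eqref{borcherds2} with $k=-1$ and $m=0$ (so $m+n$ in the first term becomes $n$), by the same degree-counting argument used in the proof of \prref{pro3.5}: the second sum on the left of \eqref{borcherds2} has $\mu_{(n-1-j)}(I\otimes\mu_{(j)})$ which lowers degree too much for $j\ge0$ except possibly contributing nothing here (one must check the grading carefully, but the term $\mu_{(n+k-j)}$ with $k=-1$ is $\mu_{(n-1-j)}$ composed with $\mu_{(m+j)}=\mu_{(j)}$, and the relevant degrees kill all but a controlled piece), leaving precisely the derivation identity after accounting for the $\binom{n+\N_{12}}{j}$ and $\binom{m+\N_{13}}{j}$ binomials, whose $j\ge1$ contributions vanish in the associated graded because $\N$ drops degree.

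For the non-local part, I would use the Leibniz rule for $\partial$ on the commutative associative algebra $\gr V$ (\prref{pro3.5}) together with \prref{co3}\eqref{co3.3}--\eqref{co3.4}, which say each $\phi_i$ and $\psi_i$ is a derivation of $\mu_{(-1)}$. Expanding $\frac{1}{\la+\d}$ via \eqref{e2.9e}, the term $(\frac{1}{\la+\d}\phi_i a)(\psi_i(bc))$ must be matched against $(\frac{1}{\la+\d}\phi_i a)(\psi_i b)\,c + (-1)^{p_{a,b}} b\,(\frac{1}{\la+\d}\phi_i a)(\psi_i c)$. Since $\psi_i$ is a derivation, $\psi_i(bc) = (\psi_i b)c + (-1)^{p_{i,b}}b(\psi_i c)$, and then commutativity/associativity of $\mu_{(-1)}$ on $\gr V$ rearranges the factors; one has to track the parity signs carefully, noting $p_{i,a}+p_{i,b}$ combine with the reordering signs to produce exactly $(-1)^{p_{a,b}}$ in the second term (here one uses $p_{\phi_i}=p_{\psi_i}=p_i$). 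This is a finite sign bookkeeping computation with no structural obstruction.

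The main obstacle I expect is the polynomial-part argument: one must carefully identify which terms of the Borcherds identity \eqref{borcherds2} survive passage to $\gr V$ and land in the correct graded piece, and confirm that the surviving terms assemble into exactly the second-argument derivation property rather than something shifted. The bookkeeping is that, with $m=0$ and $k=-1$, degree reasons force $j=0$ in the first and third sums (the $j\ge1$ terms carry extra copies of $\N$ or involve $\mu_{(\ge0)}$, each dropping one degree) while the middle sum $\mu_{(n-1-j)}(I\otimes\mu_{(j)})P_{12}$ drops degree by too much for all $j\ge0$ and hence vanishes on the associated graded — this is the one place where a sign or index slip could derail things, so I would write it out in full. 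Once the derivation property of $\mu_{(n)}$ in its second slot is established, both pieces combine and the proof is complete; alternatively, one could shorten the whole argument by invoking the right Leibniz rule \eqref{lm2.6} once the left Leibniz rule and skew-symmetry are in hand, but since skew-symmetry is already proved in \prref{propbrack}, the cleanest route remains the direct two-part verification above.
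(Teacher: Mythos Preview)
Your overall strategy matches the paper's exactly: split $\{a_\la bc\}$ into the polynomial piece $[a_\la bc]$ and the non-local piece, prove the Leibniz rule for each separately, using the Borcherds identity \eqref{borcherds2} with $k=-1$ for the first and the derivation property of $\psi_i$ (\prref{co3}\eqref{co3.4}) together with commutativity/associativity for the second. Your treatment of the non-local part is correct, including the sign bookkeeping.

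However, your degree-counting for the polynomial part contains a concrete error. With your parametrization $m=0$, $k=-1$, $n\ge0$, you claim the middle sum $-(-1)^{n+j}\mu_{(n-1-j)}(I\otimes\mu_{(j)})\binom{n+\N_{12}}{j}P_{12}$ vanishes in the associated graded for all $j\ge0$. It does not: at $j=n$ you get $\mu_{(-1)}(I\otimes\mu_{(n)})$ (degree $0$ composed with degree $-1$), the binomial coefficient is $\binom{n}{n}=1$, and the sign is $-(-1)^{2n}=-1$, yielding exactly $-\mu_{(-1)}(I\otimes\mu_{(n)})P_{12}$. This is precisely the $(-1)^{p_{a,b}}b[a_\la c]$ term you need; without it you would be left with only $\mu_{(n)}(I\otimes\mu_{(-1)})=\mu_{(-1)}(\mu_{(n)}\otimes I)$, which is not the derivation identity. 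You flagged this as the place to ``write out in full,'' and indeed doing so would have caught the slip.

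For comparison, the paper chooses the parametrization $n=0$, $k=-1$, $m\ge0$ instead. This yields the same identity but is slightly cleaner to read off: the surviving $j$ in each of the three sums is then $j=0$, $j=0$, $j=m$ respectively, and in particular the middle sum's contribution $-\mu_{(-1)}(I\otimes\mu_{(m)})P_{12}$ sits at $j=0$ where the binomial $\binom{\N_{12}}{0}=1$ makes the survival transparent.
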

\begin{proof}
We apply the Borcherds identity \eqref{borcherds2} for $k=-1$, $n=0$ and $m\geq 0$.
Recall that as maps on the associated graded, $\N$ and $\mu_{(r)}$ for $r\ge0$ have degree $-1$, and $\mu_{(r)}$ for $r<0$ have degree $0$.
Hence, when acting on $\gr^l(V^{\otimes 3})$, the maps of degree $-1$ in \eqref{borcherds2} that send $\gr^l(V^{\otimes 3})$ to $\gr^{l-1}V$
reduce to the following:
\begin{align*}
\sum_{j\in \ZZ_{+}}(-1)^{j}\mu_{(m-j)}(I\otimes \mu_{(-1+j)})\binom{\N_{12}}{j} &=\mu_{(m)}(I\otimes \mu_{(-1)})\, ,\\
-\sum_{j\in \ZZ_{+}}(-1)^{j}\mu_{(-1-j)}(I\otimes \mu_{(m+j)}) \binom{\N_{12}}{j} P_{12} &=-\mu_{(-1)}(I\otimes \mu_{(m)}) {P_{12}}\, ,\\
\sum_{j\in \ZZ_{+}} \mu_{(m-1-j)}( \mu_{(j)}\otimes I) \binom{m+\N_{13}}{j} &= \mu_{(-1)}( \mu_{(m)}\otimes I)\, .
\end{align*}
This implies the identities
\[\mu_{(m)}(I\otimes \mu_{(-1)})=\mu_{(-1)} (\mu_{(m)}\otimes I)+\mu_{(-1)} (I\otimes \mu_{(m)})P_{12} \] 
for all $m\ge0$, which are equivalent to
\begin{equation}\label{e4}
[a_{\lambda}bc]=[a_{\lambda}b]c+(-1)^{p_{a,b}}b[a_{\lambda}c] \,.
\end{equation}

From the commutativity and associativity of the product (\prref{pro3.5})
and the fact that all $\psi_i$ are derivations (\prref{co3}\eqref{co3.4}), we have:
\begin{align*}
&\sum_{i=1}^{L} (-1)^{p_{i,a}} \Bigl(\frac{1}{\lambda+\partial }\, \phi_{i}a\Bigr)\psi_{i}( bc) \\
&=\sum_{i=1}^{L} (-1)^{p_{i,a}} \Bigl(\frac{1}{\lambda+\partial }\,  \phi_{i}a\Bigr)\psi_{i}( b)c
+ (-1)^{p_{i,a}+p_{i,b}} \Bigl(\frac{1}{\lambda+\partial }\,  \phi_{i}a\Bigr)b\,\psi_{i}( c)\\
&=\sum_{i=1}^{L} (-1)^{p_{i,a}} \Bigl(\frac{1}{\lambda+\partial }\,\phi_{i}a\Bigr)\psi_{i}( b)c
+(-1)^{p_{a,b}+p_{i,a}} b\Bigl(\frac{1}{\lambda+\partial }\,  \phi_{i}a\Bigr)\psi_{i}( c) \, .
\end{align*}
Adding this to \eqref{e4} completes the proof.
\end{proof}

\begin{proposition}\label{propadm}
For any filtered logVA $V$, the bracket $\{\cdot_{\lambda}\cdot\}$ on $\V=\gr V$ is admissible$:$
\begin{equation}\label{e5}
\{a_{\lambda}\{b_{\mu}c\}\}\in \iota_{\mu,\lambda} \V[[\lambda^{-1}, \mu^{-1}, (\lambda+\mu)^{-1}]][\lambda, \mu]
\end{equation}
for all $a,b,c\in\V$.
\end{proposition}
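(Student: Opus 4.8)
The plan is to reduce admissibility to an explicit computation of the composition $\{a_\lambda\{b_\mu c\}\}$ using formula \eqref{eq3.4}, together with the already-established properties of the $\lambda$-bracket and the derivation properties of the $\phi_i,\psi_i$. First I would write $\{b_\mu c\} = [b_\mu c] + \sum_i (-1)^{p_{i,b}} \bigl(\tfrac{1}{\mu+\partial}\phi_i b\bigr)(\psi_i c)$. The first summand $[b_\mu c]\in(\gr V)[\mu]$ is a polynomial in $\mu$, so applying $\{a_\lambda\cdot\}$ to it, term by term, lands in $(\gr V)(\!(\lambda^{-1})\!)[\mu]$, which obviously sits inside $\iota_{\mu,\lambda}\V_{\lambda,\mu}$. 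For the second summand I would use the Leibniz rule (Proposition \ref{pro3.8}) to distribute $\{a_\lambda\cdot\}$ across the product $\bigl(\tfrac{1}{\mu+\partial}\phi_i b\bigr)(\psi_i c)$, and sesqui-linearity (Proposition \ref{propbrack}(i),(ii)) to move the $\tfrac{1}{\mu+\partial}$ and the $\psi_i$ through the bracket: one gets terms of the shape $\bigl(\tfrac{1}{\mu+\lambda+\partial}\{a_\lambda \phi_i b\}\bigr)(\psi_i c)$ and $\bigl(\tfrac{1}{\mu+\partial}\phi_i b\bigr)\{a_\lambda \psi_i c\}$, where a factor $(\lambda+\mu+\partial)^{-1}$ has appeared precisely because $\{a_\lambda(\tfrac{1}{\mu+\partial}x)\}$ forces us to expand $(\lambda+\mu+\partial)^{-1}$.

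The key point is then to track which negative powers of which variables can occur. Because $\{a_\lambda y\}\in(\gr V)(\!(\lambda^{-1})\!)$ for any $y$ (so only finitely many positive powers of $\lambda$, and a Laurent tail in $\lambda^{-1}$), and because the only other denominators introduced are $(\mu+\partial)^{-1}$ and $(\lambda+\mu+\partial)^{-1}$, every resulting term is a finite $\CC$-linear combination of expressions $v\,\lambda^{p}\mu^{-q}(\lambda+\mu)^{-r}$ with $p$ bounded above, $q,r\ge 0$, and $v\in\gr^{\ell} V$ for the appropriate single degree $\ell$; moreover $\partial$ acting on the finite-dimensional graded piece only produces finitely many such terms. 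That is exactly the statement that $\{a_\lambda\{b_\mu c\}\}$ lies in $\iota_{\mu,\lambda}\V_{\lambda,\mu}$: expanding in the region $|\lambda|\gg|\mu|\gg 1$, i.e.\ applying $\iota_{\mu,\lambda}$ to $\V[[\lambda^{-1},\mu^{-1},(\lambda+\mu)^{-1}]][\lambda,\mu]$, recovers the space $\V(\!(\lambda^{-1})\!)(\!(\mu^{-1})\!)$ in which the composition naturally lives. I would also invoke Remark \ref{rem2.1}: since skew-symmetry has already been proved, verifying \eqref{adm1} automatically gives \eqref{adm2} and \eqref{adm3}, so it suffices to check this one inclusion.

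The main obstacle is bookkeeping rather than conceptual: one must be careful that the substitution $\mu\mapsto\mu+\partial$ inside $(\mu+\partial)^{-1}$ is compatible with the further substitution forced by sesqui-linearity, so that the two ``$\partial$'' operators (the one from $\tfrac{1}{\mu+\partial}$ and the one from $\{a_{-\lambda-\partial}\cdot\}$-type terms) are unambiguously ordered, and that when $\{a_\lambda\phi_i b\}$ is itself a Laurent series $\sum_{n\le N} c_n\lambda^n$, the expression $\tfrac{1}{\lambda+\mu+\partial}\sum c_n\lambda^n$ is interpreted via $\iota_{\mu,\lambda}$ consistently. Once the ordering conventions are fixed (as in \cite[Section 2.1]{DK}), each term visibly has the required form, and finiteness of the sum over $i$ and over the expansion indices (guaranteed by $\{a_\lambda y\}\in(\gr V)(\!(\lambda^{-1})\!)$ and by $\gr V$ being an honest $\ZZ_+$-graded object with $\mu_{(n)}$ raising/lowering degree by a fixed amount) closes the argument.
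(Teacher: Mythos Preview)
Your proposal is correct and follows essentially the same route as the paper: write $\{b_\mu c\}$ via \eqref{eq3.4}, apply $\{a_\lambda\cdot\}$ using the Leibniz rule and sesqui-linearity to produce the three terms $\{a_\lambda[b_\mu c]\}$, $\bigl(\tfrac{1}{\lambda+\mu+\partial}\{a_\lambda\phi_i b\}\bigr)(\psi_i c)$, and $\bigl(\tfrac{1}{\mu+\partial}\phi_i b\bigr)\{a_\lambda\psi_i c\}$, then observe each lies in the appropriate subspace of $\iota_{\mu,\lambda}\V_{\lambda,\mu}$; the appeal to Remark~\ref{rem2.1} to reduce to \eqref{adm1} is also exactly what the paper does. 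One small slip: the embedding $\iota_{\mu,\lambda}$ corresponds to expanding $(\lambda+\mu)^{-1}$ in non-negative powers of $\lambda$ and negative powers of $\mu$, i.e.\ the region $|\mu|\gg|\lambda|$, not $|\lambda|\gg|\mu|$ as you wrote---but this does not affect the argument, since you correctly identify the target space $\V(\!(\lambda^{-1})\!)(\!(\mu^{-1})\!)$.
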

\begin{proof}
Note that \eqref{e5} implies that the $\lambda$-bracket is admissible, due to \reref{rem2.1}.
Using \eqref{eq3.4}, the Leibniz rule, and sesqui-linearity, we compute:
\begin{equation*}
\begin{split}
\{a_{\lambda}\{b_{\mu}c\}\}
&=\{a_{\lambda}[b_{\mu}c]\}+\sum_{i=1}^{L} (-1)^{p_{i,b}} \Bigl\{a_{\lambda} \Bigl(\Bigl(\frac{1}{\mu+\partial }\, \phi_{i} b\Bigr)\psi_{i} c\Bigr)\Bigr\}\\
&=\{a_{\lambda}[b_{\mu}c]\}
+\sum_{i=1}^{L} (-1)^{p_{i,b}} \Bigl(\frac{1}{\mu+(\lambda+\partial)}\{a_{\lambda}(\phi_{i}b)\}\Bigr)(\psi_{i}c) \\
&\quad+\sum_{i=1}^{L} (-1)^{p_{a,b}+p_{i,a}+p_{i,b}} \Bigl(\frac{1}{\mu+\partial}\phi_{i}b\Bigr) \{a_{\lambda}(\psi_{i}c)\}\,.
\end{split}
\end{equation*}
Recall that our convention is to expand $(\mu+\partial)^{-1}$ in negative powers of $\mu$ and non-negative powers of $\partial$ (cf.\ \eqref{e2.9e}).
Then after we replace $\partial$ with $\lambda+\partial$, we expand $(\mu+(\lambda+\partial))^{-1}$ in negative powers of $\mu$ and non-negative powers of $\lambda$ and $\partial$. Hence, by \eqref{eq2.1b} and \eqref{eq2.1a}, we have for the three terms in the right-hand side above:
\begin{align}
\label{rterm1}
&\{a_{\lambda}[b_{\mu}c]\} \in \V[[\lambda^{-1}]][\lambda, \mu]\,,\\
\label{rterm2}
&(-1)^{p_{i,b}} \Bigl(\frac{1}{\mu+(\lambda+\partial)}\{a_{\lambda}(\phi_{i}b) \}\Bigr)(\psi_{i}c) \in \iota_{\mu,\lambda} \V[[\lambda^{-1}, (\lambda+\mu)^{-1}]][\lambda]\,,\\
\label{rterm3}
&(-1)^{p_{a,b}+p_{i,a}+p_{i,b}} \Bigl(\frac{1}{\mu+\partial}\phi_{i}b\Bigr) \{a_{\lambda}(\psi_{i}c)\} \in \V[[\lambda^{-1}, \mu^{-1}]][\lambda]\,.
\end{align}
This proves \eqref{e5}.
\end{proof}

\subsection{Jacobi identity for the $\lambda$-bracket}\label{sec3.4}

\begin{proposition}\label{pro3.11} 
Let\/ $V$ be a filtered logVA. Then  $(\gr {V} , \partial ,\{ \cdot_{\lambda}\cdot\})$ is a non-local Lie conformal algebra; in particular, the bracket $\{\cdot_{\lambda}\cdot\}$ on $\V=\gr V$ satisfies the Jacobi identity
\[\{a_{\lambda}\{b_{\mu}c\}\}=\{\{a_{\lambda}b\}_{\lambda+\mu}c\}+(-1)^{p_{a,b}}\{b_{\mu}\{a_{\lambda}c\}\}\, .\]
\end{proposition}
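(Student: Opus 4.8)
The strategy is to deduce the Jacobi identity for $\{\cdot_\lambda\cdot\}$ on $\gr V$ directly from the Borcherds identity \eqref{borcherds2} in $V$, by selecting the appropriate values of the integer parameters $m,n,k$ and tracking which terms survive after passing to the associated graded. Recall that $\mu_{(r)}$ for $r\ge 0$ and the braiding map $\N$ lower filtration degree by $1$, whereas $\mu_{(r)}$ for $r<0$ preserve it; so in any identity on $\gr V$ only the terms of the correct total degree contribute. The $\lambda$-bracket \eqref{eq2.1} has two pieces: the ``polynomial part'' $[a_\lambda b]=\sum_{n\ge 0}\tfrac{\lambda^n}{n!}\mu_{(n)}(a\otimes b)$ built from the non-negative products, and the ``singular part'' $\mu_{(-1)}\bigl(\N(\tfrac{1}{\lambda+\partial}a\otimes b)\bigr)=\sum_{n\ge 0}(-1)^n n!\,\lambda^{-n-1}\mu_{(-n-1)}\bigl(\N(a\otimes b)\bigr)$ built from $\mu_{(-1)}$, $\N$, and the lower products via \coref{cor}\eqref{2cor5}. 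Both pieces have a clean origin in \eqref{borcherds2}: the polynomial part comes from the $m,n\ge 0$ instance (as for ordinary Lie conformal algebras), while the singular part comes from the degree-$0$ terms involving $\mu_{(-n-1)}$ and $\N$.

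First I would assemble, for each composite bracket occurring in the Jacobi identity, its expansion as a sum over $m,n,k$ of terms of \eqref{borcherds2}, being careful about the expansion directions: by \prref{propadm} we know $\{a_\lambda\{b_\mu c\}\}$ lies in $\iota_{\mu,\lambda}\V_{\lambda,\mu}$, $(-1)^{p_{a,b}}\{b_\mu\{a_\lambda c\}\}$ lies in $\iota_{\lambda,\mu}\V_{\lambda,\mu}$, and $\{\{a_\lambda b\}_{\lambda+\mu}c\}$ lies in $\iota_{\lambda,\lambda+\mu}\V_{\lambda,\mu}$, so the identity is to be read in $\V_{\lambda,\mu}$ after applying the embeddings \eqref{iota}. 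Then I would organize the verification by the ``number of $\N$'s'' appearing: the terms with no $\N$ give exactly the Jacobi identity for ordinary Poisson vertex algebras, which follows from the $m,n,k$-indexed family of \eqref{borcherds2} restricted to degree-preserving terms exactly as in \cite[Section 2.5]{BDSK} or \cite{Li}; the genuinely new content is the terms linear in $\N$ (higher powers of $\N$ should cancel or be absent because each bracket contributes at most one factor of $\N$, and the total degree drop in the Jacobi identity is $2$, matching two ``$\ge 0$-type'' operations — note each $\N$ or non-negative product drops degree by $1$). For the $\N$-linear terms I would use the hexagon axiom (equivalently \prref{co3}\eqref{co3.3},\eqref{co3.4}, that $\phi_i$ and $\psi_i$ are derivations of all products) together with the commutativity of $\N_{ij}$ (\prref{co3}\eqref{co3.2}) and the symmetry of $\N$ (\prref{co3}\eqref{co3.11}) to move the $\N$'s to the outside and match the three sides.

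An alternative, and probably cleaner, organizing principle: write $\{a_\lambda b\}=[a_\lambda b]+\langle a_\lambda b\rangle$ where $\langle a_\lambda b\rangle:=\mu_{(-1)}\bigl(\N(\tfrac{1}{\lambda+\partial}a\otimes b)\bigr)$, and expand each of the three Jacobi terms using sesqui-linearity and the Leibniz rule (as already begun in the proof of \prref{propadm}). This reduces everything to (i) the known polynomial Jacobi identity for $[\cdot_\lambda\cdot]$, (ii) mixed identities such as $\{a_\lambda\langle b_\mu c\rangle\}$ versus $\langle\{a_\lambda b\}_{\lambda+\mu}c\rangle$ plus $\langle b_\mu\{a_\lambda c\}\rangle$, and (iii) the purely singular identity for $\langle\cdot_\lambda\cdot\rangle$. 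For (ii) and (iii) the key computational inputs are: the Leibniz rule (\prref{pro3.8}) and right Leibniz rule \eqref{lm2.6}; sesqui-linearity to handle the $\tfrac{1}{\lambda+\partial}$ factors; the fact that $\phi_i,\psi_i$ are derivations of $\mu_{(-1)}$ and commute with $\partial$; and the Borcherds identity \eqref{borcherds2} specialized to the values of $m,n,k$ that produce $\mu_{(-1)}\circ\N$ on one side (e.g.\ $n=0$, $k=-1$, $m=-1$ combined with the degree bookkeeping, as used in Propositions \ref{pro3.5} and \ref{pro3.8}).

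\textbf{Main obstacle.} The hard part will be the bookkeeping in (ii): reconciling the nested expansions $\tfrac{1}{\mu+(\lambda+\partial)}$ arising from $\{a_\lambda\langle b_\mu c\rangle\}$ with the expansions $\tfrac{1}{(\lambda+\mu)+\partial}$ arising from $\langle\{a_\lambda b\}_{\lambda+\mu}c\rangle$, which live a priori in different completions and must be identified via the embeddings \eqref{iota} — this is precisely the subtlety that admissibility (\prref{propadm}) was designed to control, and carrying it through the $\N$-linear terms, while simultaneously applying the hexagon axiom to commute $\psi_i$ past the products, is where the real work lies. A secondary technical point is confirming that no terms quadratic (or higher) in $\N$ survive: this should follow from the observation that in each of the three Jacobi terms the braiding map enters through at most one bracket at a time once everything is reduced via Leibniz and sesqui-linearity, but it must be checked that the ``cross'' contributions (one $\N$ from an outer bracket acting on a product that already contains an $\N$ from an inner bracket) cancel in pairs, using the symmetry and commutativity of $\N$.
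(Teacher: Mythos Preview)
Your plan has a genuine gap at the very first step: the claim that ``the terms with no $\N$ give exactly the Jacobi identity for ordinary Poisson vertex algebras,'' i.e.\ that $[\cdot_\lambda\cdot]$ on $\gr V$ satisfies Jacobi on its own, is false when $\N\ne 0$. The Borcherds identity \eqref{borcherds2} with $n=0$ and $m,k\ge 0$, read at total degree $-2$ in $\gr V$, is \emph{not} just the ordinary commutator formula: the terms $\mu_{(m-j)}(I\otimes\mu_{(k+j)})\binom{\N_{12}}{j}$ for $j>m$ and $\mu_{(m+k-j)}(\mu_{(j)}\otimes I)\binom{m+\N_{13}}{j}$ for $j>m+k$ also have degree $0+(-1)+(-1)=-2$ and do not drop out. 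Thus $[a_\lambda[b_\mu c]]-(-1)^{p_{a,b}}[b_\mu[a_\lambda c]]-[[a_\lambda b]_{\lambda+\mu}c]$ equals an explicit $\N$-dependent expression (this is the content of the paper's \leref{lemproof4}, equations \eqref{e11}--\eqref{e12}). This failure is the heart of the proof, and it is precisely what the singular part $\langle\cdot_\lambda\cdot\rangle$ must compensate for; you cannot treat your (i) as ``known'' and reduce everything to (ii) and (iii).

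A second, related problem: organizing by ``number of $\N$'s from the bracket decomposition'' does not separate the terms cleanly, because the expansion conventions mix them. For example, $\{\{a_\lambda b\}_{\lambda+\mu}c\}$ applied to the singular part of the inner bracket produces, via the right Leibniz rule and sesqui-linearity, pieces that are \emph{polynomial} in $\lambda,\mu$ (coming from $\frac{[(\phi_i a)_{\lambda+\mu+\partial}c]_\to - [(\phi_i a)_\lambda c]}{\mu+\partial}(\psi_i b)$ and the like); these live in the same space as the failure of your (i) and must be combined with it, not kept in a separate ``one-$\N$'' bin. The paper's approach avoids this by decomposing $\V_{\lambda,\mu}$ into seven subspaces $\V_1,\dots,\V_7$ according to which of $\lambda^{-1}$, $\mu^{-1}$, $(\lambda+\mu)^{-1}$ occur, computing the projection of each Jacobi term onto each $\V_k$ (Lemmas \ref{lemproof1}--\ref{lemproof3}), and then verifying the identity componentwise. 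The $\V_1$ component is the hard one and uses the full degree-$(-2)$ Borcherds identity with its $\N$-terms; the $\V_2,\V_3,\V_4$ components follow from $\phi_i,\psi_i$ being derivations of $[\cdot_\lambda\cdot]$; and the $\V_5,\V_6,\V_7$ components --- your ``quadratic in $\N$'' terms, which are nonzero and must be matched, not dismissed --- follow from commutativity of the product together with $[\phi_i,\phi_j]=[\phi_i,\psi_j]=[\psi_i,\psi_j]=0$ and the symmetry of $\N$.
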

\begin{proof} 
By Propositions \ref{propbrack} and \ref{propadm}, it only remains to prove the Jacobi identity. 
Recall that, since the bracket $\{\cdot_{\lambda}\cdot\}$ is admissible, the three terms in the Jacobi identity are identified with their images in
the space $\V_{\lambda, \mu}$; see \eqref{Vlamu} and \eqref{adm1}--\eqref{adm3}. 
We introduce the following subspaces of $\V_{\lambda, \mu}$:
\begin{align*}
\V_1&=\V[\lambda,\mu], & \V_5&=\lambda^{-1}\mu^{-1}\V[[\lambda^{-1}, \mu^{-1}]], \\
\V_2&=\lambda^{-1}\V[[\lambda^{-1}]][\mu], & \V_6&=\lambda^{-1}(\lambda+\mu)^{-1}\V[[\lambda^{-1}, (\lambda+\mu)^{-1}]], \\
\V_3&=\mu^{-1}\V[[\mu^{-1}]][\lambda], & \V_7&=\mu^{-1}(\lambda+\mu)^{-1}\V[[\mu^{-1}, (\lambda+\mu)^{-1}]], \\
\V_4&=(\lambda+\mu)^{-1}\V[[(\lambda+\mu)^{-1}]][\lambda]. & &
\end{align*}
The strategy of the proof is to show that
\begin{equation}\label{s1}
\{a_{\lambda}\{b_{\mu}c\}\}, \, \{b_{\mu}\{a_{\lambda}c\}\}, \, \{\{a_{\lambda}b\}_{\lambda+\mu}c\}\in \bigoplus_{k=1}^{7} \V_{k}\subset \V_{\lambda, \mu}\, , 
\end{equation}
and then prove the Jacobi identity in each summand.
We will denote the projections onto the summands as $\pi_{k}\colon \V_{\lambda, \mu}\rightarrow \V_{k}$.
The whole proof is rather long and is divided into several steps.

First, we observe that, for any vector space $\V$, we have 
\begin{equation*}
\V[[\lambda^{-1}]][\lambda] = \V[\lambda] \oplus \lambda^{-1}\V[[\lambda^{-1}]] \,;
\end{equation*}
the first summand corresponds to series with non-negative powers of $\la$ (i.e., polynomials), while the second to series with strictly negative powers of $\la$.
Similarly, 
\[
\V[[\lambda^{-1}, \mu^{-1}]][\lambda,\mu] = \V_1 \oplus \V_2 \oplus \V_3 \oplus \V_5 \,,
\]
and the same reasoning shows that the sum $\V_1+\dots+\V_7$ is direct. (However, it is not equal to the whole $\V_{\lambda, \mu}$.)

Next, we will determine the projections of $\{a_{\lambda}\{b_{\mu}c\}\}$ and check that it is equal to their sum.
This will be done in \leref{lemproof1} below. 
Then, in Lemmas \ref{lemproof2} and \ref{lemproof3}, we will do the same for the other two terms of the Jacobi identity;
in particular, proving \eqref{s1}.
After finding all projections, it will remain to check that they satisfy the Jacobi identity.
We do this in Lemmas \ref{lemproof4}, \ref{lemproof5}, \ref{lemproof6} below, which will complete the proof of the proposition.
\end{proof}

Here we present the sequence of lemmas needed in the proof of Proposition~\ref{pro3.11}.

\begin{lemma}\label{lemproof1}
For every $a,b,c\in\V=\gr V$, we have$:$
\allowdisplaybreaks
\begin{align*}
\pi_{1}\{a_{\lambda}\{b_{\mu}c\}\}&=[a_{\lambda}[b_{\mu}c]] \,,\\
\pi_{2}\{a_{\lambda}\{b_{\mu}c\}\}&=\sum_{i=1}^{L} (-1)^{p_{i,a}} \Bigl(\frac{1}{\lambda+\partial}\phi_{i}a\Bigr)\psi_{i}([b_{\mu}c]) \,,\\
\pi_{3}\{a_{\lambda}\{b_{\mu}c\}\}&= \sum_{i=1}^{L} (-1)^{p_{a,b}+p_{i,a}+p_{i,b}} \Bigl(\frac{1}{\mu+\partial}\phi_{i}b\Bigr) \bigl[a_{\lambda}(\psi_{i}c) \bigr] \,,\\
\pi_{4}\{a_{\lambda}\{b_{\mu}c\}\}&=\sum_{i=1}^{L} (-1)^{p_{i,b}} \Bigl(\frac{1}{(\lambda+\mu)+\partial} \bigl[a_{\lambda}(\phi_{i}b)\bigr]\Bigr)(\psi_{i}c) \,,\\
\pi_{5}\{a_{\lambda}\{b_{\mu}c\}\}&=\sum_{i,j=1}^{L} (-1)^{p_{j,a}+p_{i,b}+p_{j,b}} \Bigl(\frac{1}{\lambda+\partial}\phi_{j}a\Bigr) \Bigl(\frac{1}{\mu+\partial}\phi_{i}b\Bigr) (\psi_{j}\psi_{i}c) \,,\\
\pi_{6}\{a_{\lambda}\{b_{\mu}c\}\}
&=\sum_{i,j=1}^{L} (-1)^{p_{j,a}+p_{i,b}} \Bigl(\frac{1}{(\lambda+\mu)+\partial} \Bigl(\Bigl(\frac{1}{\lambda+\partial}\phi_{j}a\Bigr)(\psi_{j}\phi_{i}b)\Bigr)\Bigr)(\psi_{i}c) \,,\\
\pi_{7}\{a_{\lambda}\{b_{\mu}c\}\}&=0 \,,
\end{align*}
and 
$\{a_{\lambda}\{b_{\mu}c\}\}$ is equal to the sum of its projections.
\end{lemma}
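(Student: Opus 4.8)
The task is to compute the seven projections $\pi_k\{a_\lambda\{b_\mu c\}\}$ and show their sum recovers the whole bracket. The plan is to start from the expansion of $\{a_\lambda\{b_\mu c\}\}$ that was already obtained in the proof of Proposition~\ref{propadm}: writing $\{b_\mu c\} = [b_\mu c] + \sum_i (-1)^{p_{i,b}}\bigl(\tfrac{1}{\mu+\partial}\phi_i b\bigr)(\psi_i c)$ and applying $\{a_\lambda \cdot\}$ via the Leibniz rule and sesqui-linearity, one gets three pieces:
\begin{equation*}
\{a_\lambda\{b_\mu c\}\} = \{a_\lambda[b_\mu c]\} + \sum_i (-1)^{p_{i,b}}\Bigl(\tfrac{1}{\mu+\lambda+\partial}\{a_\lambda(\phi_i b)\}\Bigr)(\psi_i c) + \sum_i (-1)^{p_{a,b}+p_{i,a}+p_{i,b}}\Bigl(\tfrac{1}{\mu+\partial}\phi_i b\Bigr)\{a_\lambda(\psi_i c)\}.
\end{equation*}
Now into each of these three pieces I would substitute the definition \eqref{eq3.4} for the remaining outer $\{a_\lambda \cdot\}$, expanding once more into a polynomial-in-$\lambda$ part $[a_\lambda \cdot]$ and a part with a resolvent $\tfrac{1}{\lambda+\partial}\phi_j a$ acting on the first slot. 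This produces in total a small number of explicit summands, each manifestly lying in one of the subspaces $\V_1,\dots,\V_7$ defined before the lemma, because each summand is characterized by which of the formal symbols $\lambda$, $\mu$, $\lambda+\mu$ appear inverted (via resolvents $\tfrac{1}{\lambda+\partial}$, $\tfrac{1}{\mu+\partial}$, $\tfrac{1}{(\lambda+\mu)+\partial}$) and which appear only polynomially (via the $[\cdot_\nu\cdot]$ factors).

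The bookkeeping goes as follows. The term $\{a_\lambda[b_\mu c]\}$ splits as $[a_\lambda[b_\mu c]] + \sum_j(-1)^{p_{j,a}}\bigl(\tfrac{1}{\lambda+\partial}\phi_j a\bigr)\psi_j([b_\mu c])$; the first lands in $\V_1$ and gives $\pi_1$, the second in $\V_2$ and gives $\pi_2$ (here one uses that $\psi_j$ is a derivation, Proposition~\ref{co3}\eqref{co3.4}, so $\psi_j$ commutes past and $[b_\mu c]$ stays polynomial in $\mu$). The term with $\tfrac{1}{\mu+\partial}\phi_i b$ in the first slot and $\{a_\lambda(\psi_i c)\} = [a_\lambda(\psi_i c)] + \sum_j(\cdots)\tfrac{1}{\lambda+\partial}\phi_j a \cdots$: its first part is polynomial in $\lambda$ but has a $\tfrac{1}{\mu+\partial}$, hence lies in $\V_3$ and gives $\pi_3$; its second part has both $\tfrac{1}{\mu+\partial}$ and $\tfrac{1}{\lambda+\partial}$, lands in $\V_5$, and gives $\pi_5$. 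Finally the term with $\tfrac{1}{(\lambda+\mu)+\partial}\{a_\lambda(\phi_i b)\}$: substituting $\{a_\lambda(\phi_i b)\} = [a_\lambda(\phi_i b)] + \sum_j(-1)^{p_{j,a}}\bigl(\tfrac{1}{\lambda+\partial}\phi_j a\bigr)(\psi_j\phi_i b)$, the first part has only $\tfrac{1}{(\lambda+\mu)+\partial}$ and gives $\pi_4\in\V_4$; the second part has $\tfrac{1}{(\lambda+\mu)+\partial}$ of something containing $\tfrac{1}{\lambda+\partial}$, lands in $\V_6$, and gives $\pi_6$. Nothing is ever produced with a $\tfrac{1}{\mu+\partial}$ nested inside a $\tfrac{1}{(\lambda+\mu)+\partial}$ (the $\V_7$-type pattern), so $\pi_7 = 0$, and adding the six nonzero projections visibly reconstructs the expansion we started from, proving the last assertion.

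The one subtlety — the step I expect to be the genuine obstacle rather than pure bookkeeping — is justifying that each summand really sits in the \emph{claimed} subspace after the nested resolvents are re-expanded, i.e., tracking the expansion conventions \eqref{e2.9e} and \eqref{iota} carefully. For instance, in the $\pi_4$ term one has $\tfrac{1}{(\lambda+\mu)+\partial}$ applied to $[a_\lambda(\phi_i b)]$, which is polynomial in $\lambda$; one must check that the convention (expand $((\lambda+\mu)+\partial)^{-1}$ in non-negative powers of $\partial$, hence in negative powers of $\lambda+\mu$, with $\lambda$ still polynomial) indeed places the result in $\V_4 = (\lambda+\mu)^{-1}\V[[(\lambda+\mu)^{-1}]][\lambda]$ and not somewhere straddling two summands — and this is exactly where sesqui-linearity \eqref{e2.9} was used in the derivation, since $\{a_\lambda(\phi_i b)\}$ involves $(\lambda+\partial)$-powers that must be re-collected. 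The same care is needed in the $\pi_5$ and $\pi_6$ terms, where two independent resolvents appear and one must confirm the double expansion lands in the stated $\V_5$ or $\V_6$ with the correct parity signs (the signs follow by the same Koszul-sign manipulations as in the proof of skew-symmetry in Proposition~\ref{propbrack}, using the symmetry of $\N$ and commutativity of $\mu_{(-1)}$). Once the conventions are pinned down, each projection formula is read off directly, and directness of the sum $\V_1\oplus\cdots\oplus\V_7$ — already noted before the lemma — guarantees the projections are well-defined and that equality with the full bracket is equivalent to the summand-by-summand identification just carried out.
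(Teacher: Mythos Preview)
Your proposal is correct and follows essentially the same route as the paper: start from the three-term decomposition obtained in the proof of Proposition~\ref{propadm}, expand each outer $\{a_\lambda\cdot\}$ via \eqref{eq3.4}, and identify the six resulting summands with $\V_1,\dots,\V_6$ (with nothing landing in $\V_7$), using commutativity of the product to rewrite the $\pi_5$ term in the stated form. One small remark: you do not need the derivation property of $\psi_j$ to see that $\psi_j([b_\mu c])$ is polynomial in $\mu$ --- this is automatic since $\psi_j$ is just a linear operator applied coefficientwise; the derivation property is only used later, in Lemma~\ref{lemproof5}.
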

\begin{proof}
In the proof of \prref{propadm}, we showed that $\{a_{\lambda}\{b_{\mu}c\}\}$ is the sum of the three terms \eqref{rterm1}, \eqref{rterm2}, \eqref{rterm3}.
We will expand these terms further by writing the $\la$-bracket as in \eqref{eq3.4}. 
In all cases, the summands will be in different spaces $\V_k$, so they will be the corresponding projections. 
%
%

The left-hand side of \eqref{rterm1} is:
\[
\{a_{\lambda}[b_{\mu}c]\}=[a_{\lambda}[b_{\mu}c]]+\sum_{i=1}^{L} (-1)^{p_{i,a}} \Bigl(\frac{1}{\lambda+\partial}\phi_{i}a\Bigr)\psi_{i}([b_{\mu}c])
\in\V_1\oplus\V_2\, ,
\]
which gives the projections $\pi_1$ and $\pi_2$ as the first and second summand, respectively.
For \eqref{rterm2}, we have:
\allowdisplaybreaks
\begin{align*}
\sum_{i=1}^{L} &\, (-1)^{p_{i,b}} \Bigl(\frac{1}{\mu+(\lambda+\partial)} \{a_{\lambda}(\phi_{i}b) \}\Bigr)(\psi_{i}c) \\
&=\iota_{\mu,\la} \sum_{i=1}^{L} (-1)^{p_{i,b}} \Bigl(\frac{1}{\lambda+\mu+\partial}[a_{\lambda}(\phi_{i}b)]\Bigr)(\psi_{i}c)\\
&+\iota_{\mu,\la} \sum_{i,j=1}^{L} (-1)^{p_{j,a}+p_{i,b}} \Bigl(\frac{1}{\lambda+\mu+\partial} \Bigl(\Bigl(\frac{1}{\lambda+\partial}\phi_{j}a\Bigr)(\psi_{j}\phi_{i}b)\Bigr)\Bigr)(\psi_{i}c) \\
& \in \iota_{\mu,\la} \V_4\oplus\iota_{\mu,\la} \V_6 \,,
\end{align*}
giving the projections $\pi_4$ and $\pi_6$.
Finally, for \eqref{rterm3}, we have:
\begin{align*}
\sum_{i=1}^{L} &\, (-1)^{p_{a,b}+p_{i,a}+p_{i,b}} \Bigl(\frac{1}{\mu+\partial}\phi_{i}b\Bigr) \{a_{\lambda}(\psi_{i}c)\} \\
&=\sum_{i=1}^{L} (-1)^{p_{a,b}+p_{i,a}+p_{i,b}} \Bigl(\frac{1}{\mu+\partial}\phi_{i}b\Bigr) [a_{\lambda}(\psi_{i}c)]\\
&+\sum_{i,j=1}^{L} (-1)^{p_{a,b}+p_{i,a}+p_{i,b}+p_{j,a}} \Bigl(\frac{1}{\mu+\partial}\phi_{i}b\Bigr) \Bigl(\frac{1}{\lambda+\partial}\phi_{j}a\Bigr)(\psi_{j}\psi_{i}c) \\
&\in\V_3\oplus\V_5 \,,
\end{align*}
which gives $\pi_3$ and $\pi_5$ after we switch the first two factors in the last sum using the commutativity of the product.
\end{proof}

\begin{lemma}\label{lemproof2}
For every $a,b,c\in\V=\gr V$, we have$:$
\allowdisplaybreaks
\begin{align*}
\pi_{1}\{b_{\mu}\{a_{\lambda}c\}\} &=[b_{\mu}[a_{\lambda}c]] \\
&-\sum_{i=1}^{L} (-1)^{p_{a,b}+p_{i,a}+p_{i,b}} \frac{\bigl[(\phi_{i}a)_{-\mu-\partial} b\bigr]-\bigl[(\phi_{i}a)_{\la} b\bigr]}{\lambda+\mu+\partial} (\psi_{i}c) \,,\\
\pi_{2}\{b_{\mu}\{a_{\lambda}c\}\} &= \sum_{i=1}^{L} (-1)^{p_{a,b}+p_{i,a}+p_{i,b}} \Bigl(\frac{1}{\lambda+\partial}\phi_{i}a\Bigr) \bigl[b_{\mu}(\psi_{i}c) \bigr] \,,\\
\pi_{3}\{b_{\mu}\{a_{\lambda}c\}\} &=\sum_{i=1}^{L} (-1)^{p_{i,b}} \Bigl(\frac{1}{\mu+\partial}\phi_{i}b\Bigr)\psi_{i}([a_{\lambda}c]) \,,\\
\pi_{4}\{b_{\mu}\{a_{\lambda}c\}\} &= -\sum_{i=1}^{L} (-1)^{p_{a,b}+p_{i,a}+p_{i,b}} \Bigl(\frac{1}{(\lambda+\mu)+\partial} \bigl[(\phi_{i}a)_{\lambda}b\bigr]\Bigr)(\psi_{i}c) \,,\\
\pi_{5}\{b_{\mu}\{a_{\lambda}c\}\} &= \sum_{i,j=1}^{L} (-1)^{p_{i,a}+p_{j,a}+p_{j,b}} \Bigl(\frac{1}{\mu+\partial}\phi_{j}b\Bigr) \Bigl(\frac{1}{\lambda+\partial}\phi_{i}a\Bigr) (\psi_{j}\psi_{i}c) \,,\\
\pi_{6}\{b_{\mu}\{a_{\lambda}c\}\} &=0 \,,\\
\pi_{7}\{b_{\mu}\{a_{\lambda}c\}\} &=\sum_{i,j=1}^{L} (-1)^{p_{i,a}+p_{j,b}} \Bigl(\frac{1}{(\lambda+\mu)+\partial} \Bigl(\Bigl(\frac{1}{\mu+\partial}\phi_{j}b\Bigr)(\psi_{j}\phi_{i}a)\Bigr)\Bigr)(\psi_{i}c)  \,,
\end{align*}
and\/ $\{b_{\mu}\{a_{\lambda}c\}\}$ is equal to the sum of its projections.
\end{lemma}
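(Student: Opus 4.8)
The plan is to mirror the computation of \leref{lemproof1}, but now starting from the other iterated bracket $\{b_{\mu}\{a_{\lambda}c\}\}$. First I would use the admissibility (\prref{propadm}) together with \eqref{eq3.4} and the Leibniz rule to expand $\{b_{\mu}\{a_{\lambda}c\}\}$ as the sum of three terms, analogous to \eqref{rterm1}--\eqref{rterm3}, namely
\[
\{b_{\mu}[a_{\lambda}c]\} \,, \quad
\sum_{i} (-1)^{p_{i,a}} \Bigl(\tfrac{1}{\mu+(\lambda+\partial)} \{b_{\mu}(\phi_{i}a)\}\Bigr)(\psi_{i}c) \,, \quad
\sum_{i} (-1)^{p_{i,a}+p_{a,b}+p_{i,b}} \Bigl(\tfrac{1}{\lambda+\partial}\phi_{i}a\Bigr)\{b_{\mu}(\psi_{i}c)\} \,,
\]
using sesqui-linearity to pull $\mu\mapsto \mu$ through into $(\lambda+\partial)$ in the middle term (here the roles of $\la$ and $\mu$ are swapped relative to \leref{lemproof1}, so the ``inner'' variable appearing inside the free $(\lambda+\partial)^{-1}$ is now $\la$, and the combined variable becomes $\la+\mu$ again). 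The first term lands in $\V_1\oplus\V_3$, the second in $\iota_{\mu,\la}(\V_4\oplus\V_7)$, and the third in $\V_2\oplus\V_5$, after further expanding each $\la$- or $\mu$-bracket via \eqref{eq3.4}.

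Next I would read off the seven projections. For $\pi_2$, $\pi_3$, $\pi_5$ these come out directly from the three terms above (with $\pi_3$ coming from the $[b_{\mu}\cdot]$-part of the first term, $\pi_2$ from the polynomial-in-$\la$ part of the third term, and $\pi_5$ from the double-sum part of the third term, after commuting factors). For $\pi_6$ and $\pi_7$: by inspection only the middle term contributes a $(\la+\mu)^{-1}$ factor, and it splits into a piece with a $\mu^{-1}$ and a piece without; the former gives $\pi_7$ and the latter, after the rewriting described below, contributes to $\pi_4$ and $\pi_6$ — but in fact $\pi_6=0$ here since the middle term's non-$\mu^{-1}$ part will be expressed purely in terms of $\la+\mu$ and $\la$, not $\la+\mu$ and $\la$ with an independent $(\la+\partial)^{-1}$ on a separate factor. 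The genuinely delicate projections are $\pi_1$ and $\pi_4$.

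The main obstacle is the term $\{b_{\mu}(\phi_{i}a)\}$ inside $\bigl(\frac{1}{\mu+\lambda+\partial}\{b_{\mu}(\phi_{i}a)\}\bigr)(\psi_{i}c)$: unlike in \leref{lemproof1}, this is a bracket of $b$ against $\phi_i a$ with the \emph{first} slot being $b$, so it is not directly of the form $[b_{\mu}\cdot]$ plus a $\mu^{-1}$-tail in a way that keeps $a$ in the first slot. The key step is to apply skew-symmetry (\prref{propbrack}(iii)) to rewrite $\{b_{\mu}(\phi_i a)\} = -(-1)^{p_{a,b}+p_{i,a}}\{(\phi_i a)_{-\mu-\partial} b\}$, and then expand that via \eqref{eq3.4}. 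The polynomial-bracket part produces the combination
\[
-\frac{\bigl[(\phi_{i}a)_{-\mu-\partial}b\bigr]-\bigl[(\phi_{i}a)_{\lambda}b\bigr]}{\lambda+\mu+\partial} \,,
\]
where the subtraction of $\bigl[(\phi_i a)_\la b\bigr]$ arises precisely because $\frac{1}{\la+\mu+\partial}$ applied to a polynomial in $(-\mu-\partial)$ and then re-expanded is not itself a polynomial; one must add and subtract the value at the ``resonant'' point. This is exactly the mechanism that produces the mixed $\pi_1$ and $\pi_4$ formulas stated in the lemma: the genuinely polynomial remainder goes into $\pi_1$ (joining $[b_{\mu}[a_{\lambda}c]]$), while the $[(\phi_i a)_\la b]$ piece divided by $\la+\mu+\partial$ is pure $(\la+\mu)^{-1}$-series and forms $\pi_4$. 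The remaining $\mu^{-1}$-tail of $\{(\phi_i a)_{-\mu-\partial}b\}$, coming from the braiding part of \eqref{eq3.4}, contributes the double sum in $\pi_7$. I would finish by checking that after collecting all pieces, nothing is left over, i.e.\ $\{b_{\mu}\{a_{\lambda}c\}\}$ equals the sum of the seven projections — this is automatic once each of the three terms has been fully decomposed, since every monomial has been assigned to exactly one $\V_k$.
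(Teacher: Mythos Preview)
Your approach is correct and lands on exactly the same key step as the paper: the skew-symmetry rewriting $[b_{\mu}(\phi_i a)]=-(-1)^{p_{a,b}+p_{i,b}}[(\phi_i a)_{-\mu-\partial}b]$ followed by the add-and-subtract of $[(\phi_i a)_\lambda b]$ to separate the $\V_1$ and $\V_4$ contributions. The paper, however, does not redo the full expansion from scratch. Instead it observes that $\{b_{\mu}\{a_{\lambda}c\}\}$ is obtained from $\{a_{\lambda}\{b_{\mu}c\}\}$ by the swap $a\leftrightarrow b$, $\lambda\leftrightarrow\mu$, and that this swap sends $\V_1\to\V_1$, $\V_2\leftrightarrow\V_3$, $\V_5\to\V_5$, $\V_6\leftrightarrow\V_7$, so the formulas for $\pi_2,\pi_3,\pi_5,\pi_6,\pi_7$ are read off directly from \leref{lemproof1}. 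Only $\V_4$ fails to be stable: its swap lands in $(\lambda+\mu)^{-1}\V[[(\lambda+\mu)^{-1}]][\mu]\subset\V_1\oplus\V_4$, and it is precisely this re-splitting that requires the skew-symmetry trick you identified. So your direct computation and the paper's symmetry shortcut are the same argument, with the paper saving the bookkeeping for five of the seven projections.

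Two small corrections to your write-up: the sesqui-linearity in the middle term produces $\frac{1}{\lambda+(\mu+\partial)}$ rather than $\frac{1}{\mu+(\lambda+\partial)}$ (the inner $\frac{1}{\lambda+\partial}$ has $\partial\mapsto\mu+\partial$ under the outer $\mu$-bracket), and the parity in the skew-symmetry should be $(-1)^{p_{a,b}+p_{i,b}}$, not $(-1)^{p_{a,b}+p_{i,a}}$, since $p_{b,\phi_i a}=p_b(p_i+p_a)$. Neither affects the structure of the argument.
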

\begin{proof}
We switch $a \leftrightarrow b$, $\la\leftrightarrow\mu$ in the results of \leref{lemproof2}, and note that under $\la\leftrightarrow\mu$ we have
\[
\V_1\leftrightarrow\V_1, \qquad \V_2\leftrightarrow\V_3, \qquad \V_5\leftrightarrow\V_5, \qquad \V_6\leftrightarrow\V_7.
\]
However, under $\la\leftrightarrow\mu$ we have
\[
\V_4 
\leftrightarrow (\lambda+\mu)^{-1}\V[[(\lambda+\mu)^{-1}]][\mu] \subset \V_1 \oplus \V_4 \,,
\]
where the last inclusion is obtained by the substitution $\mu=(\la+\mu)-\la$.

This proves the formulas for $\pi_{k}\{b_{\mu}\{a_{\lambda}c\}\}$ with $k=2,3,5,6,7$.
To find the projections for $k=1,4$, we have to decompose
\[
\sum_{i=1}^{L} (-1)^{p_{i,a}} \Bigl(\frac{1}{(\lambda+\mu)+\partial} [b_{\mu}(\phi_{i}a)]\Bigr)(\psi_{i}c) 
\]
according to $\V_1 \oplus \V_4$, and the terms in $\V_1$ will be added to $[b_{\mu}[a_{\lambda}c]]$ to give $\pi_{1}\{b_{\mu}\{a_{\lambda}c\}\}$,
while the terms in $\V_4$ will give $\pi_{4}\{b_{\mu}\{a_{\lambda}c\}\}$.

In order to do that, it will be convenient to first apply the skew-symmetry \eqref{e3}:
\[
[b_{\mu}(\phi_{i}a)] = -(-1)^{p_{a,b}+p_{i,b}} [(\phi_{i}a)_{-\mu-\partial} b] \,.
\]
Then we write
\[
\frac{1}{(\lambda+\mu)+\partial} [(\phi_{i}a)_{-\mu-\partial} b] = 
\frac{[(\phi_{i}a)_{-\mu-\partial} b]-[(\phi_{i}a)_{\la} b]}{\lambda+\mu+\partial}
+ \frac{1}{(\lambda+\mu)+\partial} [(\phi_{i}a)_{\la} b] \,.
\]
The first summand in the right-hand side is in fact a polynomial in $\la$, $\mu$ and $\partial$, because
$\lambda+\mu+\partial=\la - (-\mu-\partial)$. Hence, the first summand is in $\V_1$, while the second is in $\V_4$.
\end{proof}

\begin{lemma}\label{lemproof3}
For every $a,b,c\in\V=\gr V$, we have$:$
\allowdisplaybreaks
\begin{align*}
\pi_{1}&\{\{a_{\lambda}b\}_{\lambda+\mu}c\} = [[a_{\lambda} b]_{\lambda+\mu}c] \\
&+\sum_{i=1}^{L} (-1)^{p_{a,b}+p_{a,c}+p_{i,b}+p_{i,c}+p_i} 
\frac{ \bigl[ (\psi_{i} b)_{\lambda+\mu+\partial}c \bigr]_{\rightarrow} - \bigl[(\psi_{i} b)_{\mu}c \bigr] }{\la+\partial} (\phi_{i} a) \\
&-\sum_{i=1}^{L} (-1)^{p_{b,c}+p_{i,a}+p_{i,c}} \frac{ \bigl[ (\phi_{i} a)_{\lambda+\mu+\partial}c \bigr]_{\rightarrow} - \bigl[ (\phi_{i} a)_{\lambda}c \bigr] }{\mu+\partial} (\psi_{i} b) \,, \\
\pi_2&\{\{a_{\lambda}b\}_{\lambda+\mu}c\} =\sum_{i=1}^{L} (-1)^{p_{i,a}} \Bigl(\frac{1}{\lambda+\partial }\phi_{i} a\Bigr) \bigl[ (\psi_{i} b)_{\mu}c \bigr] \,, \\
\pi_3&\{\{a_{\lambda}b\}_{\lambda+\mu}c\} =-\sum_{i=1}^{L} (-1)^{p_{b,c}+p_{i,a}+p_{i,c}} \bigl[(\phi_{i} a)_{\lambda}c\bigr] \Bigl(\frac{1}{\mu+\partial }\psi_{i} b\Bigr) \,,\\
\pi_4&\{\{a_{\lambda}b\}_{\lambda+\mu}c\} =\sum_{i=1}^{L} (-1)^{p_{i,a}+p_{i,b}} \Bigl(\frac{1}{(\lambda+\mu)+\partial}\phi_{i}([a_{\lambda}b])\Bigr) (\psi_{i}c) \,,\\
\pi_5&\{\{a_{\lambda}b\}_{\lambda+\mu}c\} = 0 \,,\\
\pi_6&\{\{a_{\lambda}b\}_{\lambda+\mu}c\} \\
&=\sum_{i,j=1}^{L} (-1)^{p_{i,a}+p_{j,b}+p_{i,j}} \Bigl(\frac{1}{(\lambda+\mu)+\partial} \Bigl(\Bigl(\frac{1}{\lambda+\partial }\phi_{i} a\Bigr) (\phi_{j}\psi_{i}b)\Bigr)\Bigr) (\psi_{j}c) \,,\\
\pi_7&\{\{a_{\lambda}b\}_{\lambda+\mu}c\} \\
&=-\sum_{i,j=1}^{L} (-1)^{p_{i,a}+p_{j,a}+p_{j,b}} \Bigl(\frac{1}{(\lambda+\mu)+\partial}\Bigl((\phi_{j}\phi_{i}a) \Bigl(\frac{1}{\mu+\partial }\psi_{i} b\Bigr)\Bigr)\Bigr)(\psi_{j}c) \,,
\end{align*}
where we use the notation \eqref{earr}.
Moreover, $\{\{a_{\lambda}b\}_{\lambda+\mu}c\}$ is equal to the sum of its projections.
\end{lemma}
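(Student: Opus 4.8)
The plan is to proceed exactly as in the proofs of \leref{lemproof1} and \leref{lemproof2}: expand $\{\{a_{\lambda}b\}_{\lambda+\mu}c\}$ by means of the explicit formula \eqref{eq3.4} for the $\lambda$-bracket, and then sort the resulting terms into the subspaces $\V_{1},\dots,\V_{7}$ (recall that admissibility lets us view $\{\{a_{\lambda}b\}_{\lambda+\mu}c\}$ as an element of $\V_{\lambda,\mu}$). Writing
\[
\{a_{\lambda}b\}=[a_{\lambda}b]+\sum_{j=1}^{L}(-1)^{p_{j,a}}\Bigl(\tfrac{1}{\lambda+\partial}\,\phi_{j}a\Bigr)(\psi_{j}b)
\]
and applying $\{\cdot_{\lambda+\mu}c\}$ term by term to the $\lambda$-expansion, I would treat the two summands separately. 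For the polynomial part, a second use of \eqref{eq3.4} together with \prref{co3}\eqref{co3.3} (each $\phi_{i}$ is a derivation of the $(n+\N)$-products) gives
\[
\{[a_{\lambda}b]_{\lambda+\mu}c\}=[[a_{\lambda}b]_{\lambda+\mu}c]
+\sum_{i=1}^{L}(-1)^{p_{i,a}+p_{i,b}}\Bigl(\tfrac{1}{(\lambda+\mu)+\partial}\,\phi_{i}([a_{\lambda}b])\Bigr)(\psi_{i}c)\,,
\]
whose first term is a polynomial in $\lambda$ and $\lambda+\mu$, hence lies in $\V_{1}$, and whose second term is precisely $\pi_{4}\in\V_{4}$.

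For the singular part, which is a finite sum of products in $\gr V$, I would apply the right Leibniz rule \eqref{lm2.6} with $\lambda$ replaced by $\lambda+\mu$, splitting it into a term in which $c$ is bracketed with $\tfrac{1}{\lambda+\partial}\phi_{j}a$ and $\psi_{j}b$ is carried along, and a term in which $c$ is bracketed with $\psi_{j}b$ and $\tfrac{1}{\lambda+\partial}\phi_{j}a$ is carried along. In the first term, sesqui-linearity (\prref{propbrack}) gives $\{\bigl(\tfrac{1}{\lambda+\partial}\phi_{j}a\bigr)_{\nu}c\}=\tfrac{1}{\lambda-\nu}\{(\phi_{j}a)_{\nu}c\}$, which after $\nu=\lambda+\mu$ and the substitution $\nu\mapsto\nu+\partial$ coming from the arrow notation produces an overall factor $-\tfrac{1}{\mu+\partial}$ (understood via $\iota_{\lambda,\lambda+\mu}$); expanding the inner bracket $\{(\phi_{j}a)_{\lambda+\mu}c\}$ once more by \eqref{eq3.4}, its polynomial part yields $\pi_{3}\in\V_{3}$ plus a polynomial remainder, and its singular part yields $\pi_{7}\in\V_{7}$. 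Symmetrically, the second term, with the inner bracket $\{(\psi_{j}b)_{\lambda+\mu}c\}$ carried along with $\tfrac{1}{\lambda+\partial}\phi_{j}a$, yields $\pi_{2}\in\V_{2}$ plus a polynomial remainder from its polynomial part, and $\pi_{6}\in\V_{6}$ from its singular part; no term of type $\V_{5}$ ever appears, so $\pi_{5}=0$. Adding the polynomial remainders to $[[a_{\lambda}b]_{\lambda+\mu}c]$ reproduces $\pi_{1}$, and since every piece produced lies in one of $\V_{1},\dots,\V_{7}$, it follows that $\{\{a_{\lambda}b\}_{\lambda+\mu}c\}$ equals the sum of its projections. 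The separations used above rest on two elementary facts: the quotients $\tfrac{(\lambda+\mu+\partial)^{m}-\lambda^{m}}{\mu+\partial}$ and $\tfrac{(\lambda+\mu+\partial)^{m}-\mu^{m}}{\lambda+\partial}$ are polynomials in $\lambda,\mu,\partial$, because $\lambda+\mu+\partial=\lambda+(\mu+\partial)=(\lambda+\partial)+\mu$; and the formal identity $\sum_{l\ge0}(-1)^{l}(\partial^{l}h)\bigl((\nu+\partial)^{-l-1}k\bigr)=\tfrac{1}{\nu+\partial}(hk)$, which lets the singular sums be re-collected into the compact form of $\pi_{6}$ and $\pi_{7}$.

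I expect the only real difficulty to be organizational: carrying the Koszul signs correctly through the right Leibniz rule \eqref{lm2.6}, the commutativity of $\mu_{(-1)}$, and the symmetry of $\N$; and checking, at each step, that the expansions of $(\lambda+\mu)^{-1}$, $\lambda^{-1}$ and $\mu^{-1}$ in use are the ones dictated by $\iota_{\lambda,\lambda+\mu}$, so that each piece verifiably lands in the asserted $\V_{k}$. All the structural inputs --- formula \eqref{eq3.4}, the right Leibniz rule \eqref{lm2.6}, sesqui-linearity and skew-symmetry \eqref{e3}, and \prref{co3} --- are already available; what remains is to carry out this bookkeeping.
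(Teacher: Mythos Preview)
Your proposal is correct and follows essentially the same approach as the paper's own proof: split $\{a_\lambda b\}$ via \eqref{eq3.4}, handle $\{[a_\lambda b]_{\lambda+\mu}c\}$ directly to get the $\V_1$- and $\V_4$-pieces, apply the right Leibniz rule \eqref{lm2.6} to the product $\bigl(\tfrac{1}{\lambda+\partial}\phi_j a\bigr)(\psi_j b)$, use sesqui-linearity to extract the factor $\tfrac{1}{\lambda-\nu}\big|_{\nu=\lambda+\mu+\partial}=-\tfrac{1}{\mu+\partial}$, and then sort the resulting pieces into $\V_1\oplus\V_3\oplus\V_7$ and $\V_1\oplus\V_2\oplus\V_6$ using the polynomial-quotient trick and the Leibniz-rule identity for collapsing the nested $\partial$'s. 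The paper carries out exactly this computation (see \eqref{e7} and \eqref{e9}); the only difference is presentational, and your identification of the bookkeeping hazards (Koszul signs, choice of expansion $\iota_{\lambda,\lambda+\mu}$) matches what the paper tracks.
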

\begin{proof}
It is possible to derive these projections from \leref{lemproof1}, by using the skew-symmetry to write
\[
\{\{a_{\lambda}b\}_{\lambda+\mu}c\} = -(-1)^{p_{a,c}+p_{b,c}} \{c_{-\lambda-\mu-\partial} \{a_{\lambda}b\}\} \,,
\]
and then making the appropriate substitutions in the results of \leref{lemproof1}, similarly to the proof of \leref{lemproof2}.
Instead, we present here a more direct proof, which is done by an explicit calculation of $\{\{a_{\lambda}b\}_{\lambda+\mu}c\}$.

We start by writing $\{a_{\lambda}b\}$ as in \eqref{eq3.4} as a sum of $[a_{\lambda}b]$ and another term involving the commutative associative product on $\V$.
Then, when taking the $(\lambda+\mu)$-bracket of a product with $c$, we apply the right Leibniz rule \eqref{lm2.6} (which follows from the Leibniz rule and skew-symmetry).
We get:
\allowdisplaybreaks
\begin{equation}\label{e7}
\begin{split}
\{&\{a_{\lambda}b\}_{\lambda+\mu}c\}
=\{[a_{\lambda} b]_{\lambda+\mu}c\} \\
&+\sum_{i=1}^{L} (-1)^{p_{i,a}+p_{b,c}+p_{i,c}} \Bigl\{\Bigl(\frac{1}{\lambda+\partial }\, \phi_{i} a\Bigr)_{\lambda+\mu+\partial}c\Bigr\}_{\rightarrow }(\psi_{i} b)\\
&+\sum_{i=1}^{L} (-1)^{p_{a,b}+p_{a,c}+p_{i,b}+p_{i,c}+p_i} \Bigl\{(\psi_{i} b)_{\lambda+\mu+\partial}c\Bigr\}_{\rightarrow } \Bigl(\frac{1}{\lambda+\partial }\, \phi_{i} a\Bigr).
\end{split}
\end{equation}
The parity in the last term was obtained from the following calculation:
\begin{align*}
p_{i,a}+p_{\phi_ia,c}+p_{\phi_ia,\psi_ib} &= p_{i,a}+(p_i+p_a)p_c+(p_i+p_a)(p_i+p_b) \\
&= p_{a,b}+p_{a,c}+p_{i,b}+p_{i,c}+p_i \mod 2\ZZ.
\end{align*}
The meaning of the arrows $\to$ in the right-hand side of \eqref{e7} is that after we compute the $(\la+\mu+\partial)$-bra\-cket, $\partial$ is applied to the terms on the right of the arrow. However, this is not done for the $\partial$ from $1/(\lambda+\partial)$ in the second summand.

Now we evaluate the $\{\,\}$-bracket for each term in the right-hand side of \eqref{e7}, using again \eqref{eq3.4}.
The first term gives
\begin{align*}
\{[a_{\lambda} b&]_{\lambda+\mu}c\} = [[a_{\lambda} b]_{\lambda+\mu}c] \\
&+\sum_{i=1}^{L} (-1)^{p_{i,a}+p_{i,b}}\Bigl(\frac{1}{(\lambda+\mu)+\partial}\phi_{i}([a_{\lambda}b])\Bigr)(\psi_{i}c)
\in\V_1\oplus\V_4 \,.
\end{align*}
The above two summands lie in $\V_1$ and $\V_4$, respectively, and contribute to the projections $\pi_1$ and $\pi_4$
of $\{\{a_{\lambda}b\}_{\lambda+\mu}c\}$.

For the second term of \eqref{e7}, we first compute
\begin{equation}\label{e9}
\begin{split}
\Bigl\{&\Bigl(\frac{1}{\lambda+\partial}\, \phi_{i} a\Bigr)_{\nu}c\Bigr\}
= \frac{1}{\lambda-\nu} \bigl\{(\phi_{i} a)_\nu c\bigr\} \\
&= \frac{1}{\lambda-\nu} \bigl[(\phi_{i} a)_\nu c\bigr] 
+\sum_{j=1}^{L} (-1)^{p_{j,a}+p_{i,j}} \frac{1}{\lambda-\nu} \Bigl(\Bigl(\frac{1}{\nu+\partial} (\phi_{j}\phi_{i}a)\Bigr) (\psi_{j}c)\Bigr),
\end{split}
\end{equation}
where we used sesqui-linearity and \eqref{eq3.4}. To get the second term of \eqref{e7}, we need to multiply this expression 
by $\psi_{i} b$ on the right and replace $\nu$ with $\la+\mu+\partial$ where $\partial$ is applied to $\psi_{i} b$. 
Hence, the factor $\frac1{\lambda-\nu}$ produces $-\frac1{\mu+\partial}(\psi_{i} b)$. 
Then the first summand in the right-hand side of \eqref{e9} gives
\[
-\bigl[ (\phi_{i} a)_{\lambda+\mu+\partial}c \bigr]_{\rightarrow} \Bigl(\frac{1}{\mu+\partial }\psi_{i} b\Bigr).
\]
In the second summand, the factor $\frac1{\nu+\partial}$ has $\partial$ that is applied to $\phi_{j}\phi_{i}a$,
while after replacing $\nu$ with $\la+\mu+\partial$ its $\partial$ is applied to $\psi_{i} b$. 
Hence, by the Leibniz rule for $\partial$, the overall result is that $\frac1{\nu+\partial}$ becomes
$\frac1{\la+\mu+\partial}$ with $\partial$ applied to the product $(\phi_{j}\phi_{i}a)(\psi_{i} b)$.
In order to write the two factors of this product next to each other, we switch $\psi_{j} c$ with $\psi_{i} b$
using commutativity:
\[
(\psi_{j} c)(\psi_{i} b) = (-1)^{p_{b,c}+p_{j,b}+p_{i,c}+p_{i,j}} (\psi_{i} b)(\psi_{j} c) \,.
\]
Thus, the second summand in the right-hand side of \eqref{e9} gives
\[
-\sum_{j=1}^{L} (-1)^{p_{j,a}+p_{b,c}+p_{j,b}+p_{i,c}} 
\Bigl(\frac{1}{\lambda+\mu+\partial}\Bigl((\phi_{j}\phi_{i}a) \frac{1}{\mu+\partial }\psi_{i} b\Bigr)\Bigr)(\psi_{j}c) \,.
\]
Adding the two summands of \eqref{e9} together, we obtain that the second term of \eqref{e7} is
\allowdisplaybreaks
\begin{align*}
\sum_{i=1}^{L} & (-1)^{p_{i,a}+p_{b,c}+p_{i,c}} \Bigl\{\Bigl(\frac{1}{\lambda+\partial }\, \phi_{i} a\Bigr)_{\lambda+\mu+\partial}c\Bigr\}_{\rightarrow }(\psi_{i} b)\\
&= -\sum_{i=1}^{L} (-1)^{p_{i,a}+p_{b,c}+p_{i,c}} \bigl[ (\phi_{i} a)_{\lambda+\mu+\partial}c \bigr]_{\rightarrow} \Bigl(\frac{1}{\mu+\partial }\psi_{i} b\Bigr) \\
&- \sum_{i,j=1}^{L} (-1)^{p_{i,a}+p_{j,a}+p_{j,b}} 
\Bigl(\frac{1}{\lambda+\mu+\partial}\Bigl((\phi_{j}\phi_{i}a) \frac{1}{\mu+\partial }\psi_{i} b\Bigr)\Bigr)(\psi_{j}c) \,.
\end{align*}
The second sum above lies in $\V_7$ and thus contributes to $\pi_7\{\{a_{\lambda}b\}_{\lambda+\mu}c\}$ (in fact, it will be equal to it).
The first sum is in $\mu^{-1} \V[[\mu^{-1}]][\lambda,\mu]\subset \V_1\oplus\V_3$ and decomposes according to:
\begin{align*}
\bigl[ &(\phi_{i} a)_{\lambda+\mu+\partial}c \bigr]_{\rightarrow} \Bigl(\frac{1}{\mu+\partial }\psi_{i} b\Bigr) \\
&= \frac{ \bigl[ (\phi_{i} a)_{\lambda+\mu+\partial}c \bigr]_{\rightarrow} - \bigl[ (\phi_{i} a)_{\lambda}c \bigr] }{\mu+\partial} (\psi_{i} b)
+ \bigl[ (\phi_{i} a)_{\lambda}c \bigr] \Bigl(\frac{1}{\mu+\partial }\psi_{i} b\Bigr).
\end{align*}
These two terms contribute to $\pi_1\{\{a_{\lambda}b\}_{\lambda+\mu}c\}$ and $\pi_3\{\{a_{\lambda}b\}_{\lambda+\mu}c\}$, respectively.

Finally, we determine the third term in the right-hand side of \eqref{e7} in a similar (but easier) manner. We obtain:
\begin{align*}
\Big\{ (\psi_{i} b&)_{\lambda+\mu+\partial}c\Big\}_{\rightarrow } \Bigl(\frac{1}{\lambda+\partial }\, \phi_{i} a\Bigr)
= \bigl[(\psi_{i} b)_{\lambda+\mu+\partial}c \bigr]_{\rightarrow} \Bigl(\frac{1}{\lambda+\partial }\phi_{i} a\Bigr)\\
&+\sum_{j=1}^{L} (-1)^{s_{i,j}} 
\Bigl(\frac{1}{\lambda+\mu+\partial}\Bigl(\Bigl( \frac{1}{\lambda+\partial }\phi_{i} a\Bigr)\phi_{j}\psi_{i}b\Bigr)\Bigr)(\psi_{j}c),
\end{align*}
where
\[
s_{i,j} = p_{a,b}+p_{a,c}+p_{i,a}+p_{i,b}+p_{i,c}+p_{i,j}+p_i+p_{j,b} \,.
\]
The second summand above lies in $\V_6$ and thus contributes to $\pi_6\{\{a_{\lambda}b\}_{\lambda+\mu}c\}$.
The first summand is in $\la^{-1} \V[[\la^{-1}]][\lambda,\mu]\subset \V_1\oplus\V_2$ and decomposes according to:
\begin{align*}
\bigl[ &(\psi_{i} b)_{\lambda+\mu+\partial}c \bigr]_{\rightarrow} \Bigl(\frac{1}{\lambda+\partial }\phi_{i} a\Bigr) \\
&= \frac{ \bigl[ (\psi_{i} b)_{\lambda+\mu+\partial}c \bigr]_{\rightarrow} - \bigl[(\psi_{i} b)_{\mu}c \bigr] }{\la+\partial} (\phi_{i} a)
+ \bigl[(\psi_{i} b)_{\mu}c \bigr] \Bigl(\frac{1}{\lambda+\partial }\phi_{i} a\Bigr).
\end{align*}
These two terms contribute to the projections $\pi_1$ and $\pi_2$ of $\{\{a_{\lambda}b\}_{\lambda+\mu}c\}$, respectively,
and we improve the answer for the latter by switching the two factors.
This completes the proof of \leref{lemproof3}.
\end{proof}

Now that we have proven \eqref{s1}, we will check the Jacobi identity in each summand $\V_1,\dots,\V_7$.
In the proof of the Jacobi identity in $\V_1$, we will need a lemma about binomial coefficients, in which we use the following standard notation.
For a polynomial
\[f(x)=\sum_{n=0}^{N}f_{n}x^{n}\in \CC[x]\,, \] 
we will write 
\[f(x)=f_{0}+O(x)\, ,\qquad
f(x)=f_{0}+f_{1}x+O(x^{2})\, ,\]
where $O(x)$ is given by terms of order $1$ or higher and $O(x^{2})$ is given by terms of order $2$ or higher.

\begin{lemma}\label{2lm} 
The binomial coefficients satisfy the following identities$:$
\medskip
\begin{enumerate}
\item\label{lm1} 
\;$\displaystyle\binom{x}{j}=\frac{(-1)^{j-1}}{j}x+O(x^{2})$  \; for \; $0< j\,;$
\medskip

\item\label{lm2} 
\;$\displaystyle\binom{m+x}{j}=\binom{m}{j}+O(x)$ \; for \; $0\leq j\leq m\,;$
\medskip

\item\label{lm3} 
\;$\displaystyle\binom{m+x}{j}=\frac{(-1)^{j+m+1}}{j\binom{j-1}{m}}x+O(x^{2})$ \; for \; $0\leq m< j\,;$
\medskip

\item\label{lm4} 
\;$\displaystyle\sum_{l=0}^{m}\frac{(-1)^{l}}{l+n+1}\binom{m}{l}=\frac{1}{(n+m+1)\binom{n+m}{m}}\,.$
\end{enumerate}
\end{lemma}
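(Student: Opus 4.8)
I will prove parts \eqref{lm1}--\eqref{lm3} as elementary Taylor expansions of a single polynomial in $x$, and part \eqref{lm4} by a partial-fraction computation (equivalently, a Beta-integral evaluation). None of this is deep; the work is entirely sign- and factorial-bookkeeping, so the emphasis below is on organizing that bookkeeping cleanly.

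\textbf{Parts \eqref{lm1}--\eqref{lm3}.} The starting observation is that $\binom{m+x}{j}=\frac1{j!}\prod_{\ell=0}^{j-1}(m+x-\ell)$ is a polynomial in $x$ of degree $j$, so it suffices to identify its constant and linear coefficients. If $0\le j\le m$, none of the factors $m-\ell$ vanishes at $x=0$, and evaluating at $x=0$ gives \eqref{lm2} at once. If $0\le m<j$, then among the factors $m+x-\ell$ exactly one, the one with index $\ell=m$, equals $x$ and hence vanishes at $x=0$; so the constant term is $0$, and the linear coefficient is $\frac1{j!}\prod_{\ell\ne m}(m-\ell)$, obtained by setting $x=0$ in the remaining $j-1$ factors. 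Splitting that product at the index $\ell=m$, the factors with $\ell<m$ contribute $m!$ and the factors with $\ell>m$ contribute $(-1)^{j-1-m}(j-1-m)!$; rewriting $m!\,(j-1-m)!/j!$ as $1/\bigl(j\binom{j-1}{m}\bigr)$ and using $(-1)^{j-1-m}=(-1)^{j+m+1}$ yields \eqref{lm3}. Finally, \eqref{lm1} is the special case $m=0$ of \eqref{lm3}, since then $\binom{j-1}{0}=1$ and $(-1)^{j+1}=(-1)^{j-1}$.

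\textbf{Part \eqref{lm4}.} The cleanest route is the partial-fraction identity
\[
\frac{m!}{(x+1)(x+2)\cdots(x+m+1)}=\sum_{\ell=0}^{m}\frac{(-1)^{\ell}\binom{m}{\ell}}{x+\ell+1}\,,
\]
whose residue at $x=-\ell-1$ is computed exactly by the same product-splitting used above and equals $(-1)^{\ell}\binom{m}{\ell}$. Setting $x=n$ on both sides, the left-hand side becomes $m!\,n!/(n+m+1)!$, which is precisely $1/\bigl((n+m+1)\binom{n+m}{m}\bigr)$ after expanding the binomial coefficient. (Alternatively, one may substitute $\frac1{\ell+n+1}=\int_0^1 t^{\ell+n}\,dt$, pull the sum inside, apply the binomial theorem, and recognize the Beta integral $\int_0^1 t^{n}(1-t)^{m}\,dt=\frac{n!\,m!}{(n+m+1)!}$.)

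\textbf{Main obstacle.} There is no real obstacle: the lemma collects standard facts and every step is a finite computation. The only genuinely error-prone points are the sign and factorial bookkeeping in \eqref{lm3} and in the residue computation for \eqref{lm4}. I would control this by always writing the relevant product of linear factors explicitly, split at the distinguished index $\ell=m$ (respectively at the pole), and converting factorials into binomial coefficients only at the very last step.
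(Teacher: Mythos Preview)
Your proof is correct and follows essentially the same approach as the paper: parts \eqref{lm1}--\eqref{lm3} are handled by the same product-splitting at the vanishing factor (with \eqref{lm1} as the $m=0$ case of \eqref{lm3}), and for \eqref{lm4} the paper uses exactly the Beta-integral argument you list as your alternative. Your partial-fraction route for \eqref{lm4} is a harmless variant that lands on the same $n!\,m!/(n+m+1)!$.
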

\begin{proof} $(1)$ is a special case of $(3)$. Identity $(2)$ follows by evaluating the left-hand side at $x=0$. To prove $(3)$, note that for $j>m\geq 0$ we have:
\begin{equation*}
\begin{split}
\binom{m+x}{j}&=\frac{(x+m)\cdots (x+1)x(x-1) \cdots (x+m-j+1)}{j!}\\
&=\frac{m!\,x(-1) \cdots (m-j+1)}{j!}+O(x^{2})\\
&=\frac{(-1)^{j-m-1}}{j\binom{j-1}{m}}x+O(x^{2})\, .
\end{split}
\end{equation*}
$(4)$ follows from the identity
\begin{align*}
&\sum_{l=0}^{m}\frac{(-1)^{l}}{l+n+1}\binom{m}{l}=\int_{0}^{1}\sum_{l=0}^{m}(-1)^{l}\binom{m}{l}x^{l}x^{n}dx\\
&\qquad =\int_{0}^{1}x^{n}(1-x)^{m}dx=\frac{n!\,m!}{(n+m+1)!}\, .
\end{align*}
This completes the proof of the lemma.
\end{proof}

\begin{lemma}\label{lemproof4}
The $\pi_1$-projection of the Jacobi identity holds, i.e.,
\begin{equation}\label{eq3.24}
\pi_{1}\{a_{\lambda}\{b_{\mu}c\}\}=\pi_{1}\{\{a_{\lambda}b\}_{\lambda+\mu}c\}
+(-1)^{p_{a,b}} \pi_{1}\{b_{\mu}\{a_{\lambda}c\}\}
\end{equation}
for every\/ $a,b,c\in\V=\gr V$.
\end{lemma}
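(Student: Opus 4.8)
The plan is to obtain \eqref{eq3.24} directly from the Borcherds identity \eqref{borcherds2} specialized to $n=0$, since for $n=0$ that identity is the logarithmic analogue of the commutator formula and hence the natural source of the relations among $[a_\lambda[b_\mu c]]$, $[b_\mu[a_\lambda c]]$, and $[[a_\lambda b]_{\lambda+\mu}c]$. First I would pass to $\gr V$. Since $\N$ and $\mu_{(r)}$ with $r\ge 0$ lower the filtration degree by one while $\mu_{(r)}$ with $r<0$ preserve it, one checks that with $n=0$ and $m,k\in\ZZ_+$ the terms of \eqref{borcherds2} have no homogeneous component of degree $-1$, so the component of interest is the one sending $\gr^l(V^{\otimes 3})$ to $\gr^{l-2}V$. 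In that component the binomial operators collapse: $\binom{m+\N_{13}}{j}$ contributes $\binom{m}{j}$ for $0\le j\le m$ (\leref{2lm}\eqref{lm2}), whereas for $j\ge 1$ the operator $\binom{\N_{12}}{j}$ contributes only its leading term $\tfrac{(-1)^{j-1}}{j}\N_{12}$ (\leref{2lm}\eqref{lm1}) and for $j>m$ the operator $\binom{m+\N_{13}}{j}$ contributes only $\tfrac{(-1)^{j+m+1}}{j\binom{j-1}{m}}\N_{13}$ (\leref{2lm}\eqref{lm3}); in each of these last cases the accompanying outer product is $\mu_{(r)}$ with $r<0$, since otherwise the degree would be $-3$. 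Multiplying by $\lambda^{m}\mu^{k}/(m!\,k!)$ and summing over $m,k\ge 0$, the pure-product terms reassemble, by the usual vertex-algebra computation together with $\sum_{l+k=N}\tfrac{\lambda^{l}\mu^{k}}{l!\,k!}=\tfrac{(\lambda+\mu)^{N}}{N!}$, into $[a_\lambda[b_\mu c]]$, $(-1)^{p_{a,b}}[b_\mu[a_\lambda c]]$, and $[[a_\lambda b]_{\lambda+\mu}c]$. Thus \eqref{borcherds2} with $n=0$ yields, on $\gr V$, an identity
\[
[a_\lambda[b_\mu c]] - (-1)^{p_{a,b}}[b_\mu[a_\lambda c]] - [[a_\lambda b]_{\lambda+\mu}c] = \Xi ,
\]
where $\Xi$ collects the braiding terms, which form three families coming from $\N_{12}$, $\N_{12}P_{12}$, and $\N_{13}$ in \eqref{borcherds2}. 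Since $\pi_{1}\{a_\lambda\{b_\mu c\}\}=[a_\lambda[b_\mu c]]$ by \leref{lemproof1}, it then remains to identify $\Xi$ with the sum of the braiding contributions to $\pi_{1}\{\{a_\lambda b\}_{\lambda+\mu}c\}$ and $(-1)^{p_{a,b}}\pi_{1}\{b_\mu\{a_\lambda c\}\}$ from Lemmas~\ref{lemproof2} and \ref{lemproof3}.

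For this identification I would reshape each of the three families. Using $\mu_{(-p)}(d\otimes e)=\tfrac{1}{(p-1)!}(\partial^{p-1}d)e$ (a consequence of \coref{cor}\eqref{2cor5}), each family becomes a product of some $\partial^{r}(\phi_{i}\cdot)$ or $\partial^{r}(\psi_{i}\cdot)$ with a $\mu_{(q)}$-expression in the remaining two arguments, carrying a coefficient that is a sum over $j$ of terms of type $\tfrac1j$ or $\tfrac{1}{j\binom{j-1}{m}}$. The $\N_{12}$-family, with $\phi_{i}a$ outside and $\psi_{i}b,c$ inside, should match the first braiding term of $\pi_{1}\{\{a_\lambda b\}_{\lambda+\mu}c\}$; the $\N_{12}P_{12}$-family matches the second one after using the symmetry of $\N$ (\prref{co3}\eqref{co3.11}) to relabel $\phi_{i}\leftrightarrow\psi_{i}$; and the $\N_{13}$-family, with $\phi_{i}a,b$ inside and $\psi_{i}c$ outside, matches the braiding term of $(-1)^{p_{a,b}}\pi_{1}\{b_\mu\{a_\lambda c\}\}$ after using the skew-symmetry \eqref{e3} to replace $[b_{\nu}(\phi_{i}a)]$ by $[(\phi_{i}a)_{-\nu-\partial}b]$. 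The arithmetic that makes these matches work is the resummation of the coefficients: the generating series of the $\tfrac1j$'s produces the operator $\tfrac{1}{\lambda+\partial}$ together with a divided-difference remainder such as $\tfrac{[(\psi_{i}b)_{\lambda+\mu+\partial}c]_{\rightarrow}-[(\psi_{i}b)_{\mu}c]}{\lambda+\partial}$, and similarly for $\tfrac{1}{\mu+\partial}$ and $\tfrac{1}{(\lambda+\mu)+\partial}$, while for the $\N_{13}$-family the coefficients $\tfrac{1}{j\binom{j-1}{m}}$ are resummed with the help of \leref{2lm}\eqref{lm4}.

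I expect the main obstacle to be precisely this last resummation of the $\N_{13}$-family: turning the double series in $\lambda^{m}\mu^{k}$ with coefficients $\tfrac{(-1)^{j+m+1}}{j\binom{j-1}{m}}$ --- together with the $\partial$'s produced by the outer $\mu_{(m+k-j)}$ --- into the nested expression $\tfrac{[(\phi_{i}a)_{-\mu-\partial}b]-[(\phi_{i}a)_\lambda b]}{\lambda+\mu+\partial}(\psi_{i}c)$ requires a careful application of \leref{2lm}\eqref{lm4} and the observation that, because $\lambda+\mu+\partial=\lambda-(-\mu-\partial)$, the remainder is a genuine polynomial in $\lambda,\mu,\partial$ and hence lands in $\V_{1}$, exactly as in the proof of \leref{lemproof3}. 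A secondary, purely clerical, difficulty is the bookkeeping of the parity signs $(-1)^{p_{i,\cdot}}$ that arise when the $\psi_{i}$'s are commuted past the graded-commutative product and when the symmetry of $\N$ is invoked; these are handled by the same parity computation that produced the sign $s_{i,j}$ in the proof of \leref{lemproof3}.
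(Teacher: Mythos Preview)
Your approach is correct and is essentially the same as the paper's. Both arguments reduce \eqref{eq3.24} to the degree $-2$ component of the Borcherds identity \eqref{borcherds2} with $n=0$ and $m,k\ge 0$, use \leref{2lm}\eqref{lm1}--\eqref{lm3} to simplify the binomial operators, and invoke \coref{cor}\eqref{2cor5} together with \leref{2lm}\eqref{lm4} for the $\N_{13}$-family; the only difference is the direction of the computation --- the paper extracts the coefficient of $\lambda^{(m)}\mu^{(k)}$ from the explicit $\pi_{1}$-formulas of Lemmas~\ref{lemproof1}--\ref{lemproof3} and matches the resulting identity \eqref{e12} against Borcherds, whereas you start from Borcherds and resum over $m,k$ to recover those same $\pi_{1}$-formulas.
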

\begin{proof}
By Lemmas \ref{lemproof1}--\ref{lemproof3}, identity \eqref{eq3.24} is equivalent to:
\begin{equation}\label{e11}
\begin{split}
[a_{\lambda}&[b_{\mu}c]] = (-1)^{p_{a,b}} [b_{\mu}[a_{\lambda}c]] + [[a_{\lambda} b]_{\lambda+\mu}c] \\
&+\sum_{i=1}^{L} (-1)^{p_{a,b}+p_{a,c}+p_{i,b}+p_{i,c}+p_i} 
\frac{ \bigl[ (\psi_{i} b)_{\lambda+\mu+\partial}c \bigr]_{\rightarrow} - \bigl[(\psi_{i} b)_{\mu}c \bigr] }{\la+\partial} (\phi_{i} a) \\
&-\sum_{i=1}^{L} (-1)^{p_{b,c}+p_{i,a}+p_{i,c}} 
\frac{ \bigl[ (\phi_{i} a)_{\lambda+\mu+\partial}c \bigr]_{\rightarrow} - \bigl[ (\phi_{i} a)_{\lambda}c \bigr] }{\mu+\partial} (\psi_{i} b) \\
&-\sum_{i=1}^{L} (-1)^{p_{i,a}+p_{i,b}} \frac{\bigl[(\phi_{i}a)_{-\mu-\partial} b\bigr]-\bigl[(\phi_{i}a)_{\la} b\bigr]}{\lambda+\mu+\partial} (\psi_{i}c) \,,
\end{split}
\end{equation}
Let us extract the coefficient of $\lambda^{(m)}\mu^{(k)}$ from each term of \eqref{e11}, for fixed $m,k\in\ZZ_+$, where we use the divided-powers notation
$\lambda^{(m)}:=\lambda^{m}/m!$.

By definition \eqref{eq2.1b}, the term
\[
[a_{\lambda}[b_{\mu}c]] = \sum_{m,k\in\ZZ_+} \la^{(m)}\mu^{(k)} a_{(m+\N)} (b_{(k+\N)}c)
\]
gives the coefficient
\[
a_{(m+\N)} (b_{(k+\N)}c) = \mu_{(m)} (I \otimes \mu_{(k)}) (a \otimes b \otimes c) \,.
\]
Similarly, the coefficient in $(-1)^{p_{a,b}} [b_{\mu}[a_{\lambda}c]]$ is
\[
(-1)^{p_{a,b}} b_{(k+\N)} (a_{(m+\N)} c) = \mu_{(k)} (I \otimes \mu_{(m)}) P_{12} (a \otimes b \otimes c) \,.
\]
We will express all other coefficients in terms of compositions of products $\mu_{(n)}$, the action of $\N$ defined by \eqref{logf-nact}, 
and permutations applied to $a \otimes b \otimes c$;
in this way deducing \eqref{e11} as a special case of the Borcherds identity \eqref{borcherds2}.

We will utilize the binomial formula
\[
(\la+\mu)^{(n)} = \sum_{l\in\ZZ_+} \la^{(n-l)} \mu^{(l)} \,,
\]
where we set $\la^{(m)} := 0$ for $m<0$. Then
\begin{align*}
[[a_{\lambda} b]_{\lambda+\mu}c] &= \sum_{j,n\in\ZZ_+} \la^{(j)} (\la+\mu)^{(n)} (a_{(j+\N)} b)_{(n+\N)} c \\
&= \sum_{j,l,n\in\ZZ_+} \la^{(j)} \la^{(n-l)} \mu^{(l)} (a_{(j+\N)} b)_{(n+\N)} c
\end{align*}
has coefficient in front of $\lambda^{(m)}\mu^{(k)}$ equal to:
\[
\sum_{j\in\ZZ_+} \binom{m}{j} (a_{(j+\N)} b)_{(m+k-j+\N)} c 
= \sum_{j=0}^m \binom{m}{j} \mu_{(m+k-j)} (\mu_{(j)} \otimes I) (a \otimes b \otimes c) \,.
\]

Using again the binomial formula, we have
\[
\bigl[ (\psi_{i} b)_{\lambda+\mu+\partial}c \bigr]_{\rightarrow} 
= \sum_{j,n\in\ZZ_+} \mu^{(n-j)} \bigl((\psi_{i} b)_{(n+\N)} c\bigr) (\la+\partial)^{(j)} \,.
\]
From here, we find
\begin{align*}
&\frac{ \bigl[ (\psi_{i} b)_{\lambda+\mu+\partial}c \bigr]_{\rightarrow} - \bigl[(\psi_{i} b)_{\mu}c \bigr] }{\la+\partial} (\phi_{i} a) \\
&\qquad= \sum_{n\in\ZZ_+} \sum_{j\ge1} \frac1j \mu^{(n-j)} \bigl((\psi_{i} b)_{(n+\N)} c\bigr) \bigl((\la+\partial)^{(j-1)} (\phi_{i} a)\bigr) \\
&\qquad= \sum_{r,n\in\ZZ_+} \sum_{j\ge1} \frac1j \la^{(r)} \mu^{(n-j)} \bigl((\psi_{i} b)_{(n+\N)} c\bigr) \bigl(\partial^{(j-1-r)} (\phi_{i} a)\bigr) \,.
\end{align*}
Hence, after using commutativity of the product, the coefficient of $\lambda^{(m)}\mu^{(k)}$ in the first sum in \eqref{e11} is:
\begin{align*}
\sum_{i=1}^{L} & \sum_{j>m} (-1)^{p_{i,a}}
\frac1j \bigl(\partial^{(j-1-m)} (\phi_{i} a)\bigr) \bigl((\psi_{i} b)_{(k+j+\N)} c\bigr) \\
&= \sum_{j>m} \frac1j
\mu_{(-1)} (I \otimes \mu_{(k+j)}) \N_{12} \bigl(\partial^{(j-1-m)} a \otimes b \otimes c\bigr) \\
&= \sum_{j>m} \frac1j
\mu_{(m-j)} (I \otimes \mu_{(k+j)}) \N_{12} (a \otimes b \otimes c) \,,
\end{align*}
where we also used \coref{cor}\eqref{2cor5} in the last equality.

In the same way, we have
\begin{align*}
&\frac{ \bigl[ (\phi_{i} a)_{\lambda+\mu+\partial}c \bigr]_{\rightarrow} - \bigl[ (\phi_{i} a)_{\lambda}c \bigr] }{\mu+\partial} (\psi_{i} b) \\
&\qquad= \sum_{r,n\in\ZZ_+} \sum_{j\ge1} \frac1j \la^{(n-j)} \mu^{(r)} \bigl((\phi_{i} a)_{(n+\N)} c\bigr) \bigl(\partial^{(j-1-r)} (\psi_{i} b)\bigr) \,,
\end{align*}
and the coefficient of $\lambda^{(m)}\mu^{(k)}$ in the second sum in \eqref{e11} is:
\begin{align*}
-&\sum_{i=1}^{L} \sum_{j>k} (-1)^{p_{a,b}+p_{i,b}+p_{i}} 
\frac1j \bigl(\partial^{(j-1-k)} (\psi_{i} b)\bigr) \bigl((\phi_{i} a)_{(m+j+\N)} c\bigr) \\
&= -\sum_{j>k} \frac1j \mu_{(k-j)} (I \otimes \mu_{(m+j)}) \N_{12} P_{12} (a \otimes b \otimes c) \,,
\end{align*}
where we also used the symmetry of $\N$ (\prref{co3}\eqref{co3.11}).

Finally, again from the binomial formula, 
we have
\begin{align*}
&\frac{\bigl[(\phi_{i}a)_{-\mu-\partial} b\bigr]-\bigl[(\phi_{i}a)_{\la} b\bigr]}{\lambda+\mu+\partial} \\
&\quad= \sum_{j\in\ZZ_+} \sum_{n\ge1} \frac{(-1)^n}{n} \la^{(j-n)} (\la+\mu+\partial)^{(n-1)} \bigl((\phi_{i} a)_{(j+\N)} b\bigr) \\
&\quad= \sum_{j,l,r\in\ZZ_+} \sum_{n\ge1} \frac{(-1)^n}{n} \la^{(j-n)} \la^{(l)} \mu^{(r)} \partial^{(n-1-l-r)} \bigl((\phi_{i} a)_{(j+\N)} b\bigr) \,.
\end{align*}
The coefficient of $\lambda^{(m)}\mu^{(k)}$ in this expression is
\[
\sum_{j>m+k} \sum_{l=0}^m \frac{(-1)^{l+j-m}}{l+j-m} \binom{m}{l} \, \partial^{(j-m-k-1)} \bigl((\phi_{i}a)_{(j+\N)}b \bigr) \,,
\]
which thanks to \leref{2lm}\eqref{lm4} can be simplified to
\[
\sum_{j>m+k} \frac{(-1)^{j+m}}{j\binom{j-1}{m}} \, \partial^{(j-m-k-1)} \bigl((\phi_{i}a)_{(j+\N)}b \bigr) \,.
\]
Hence, the coefficient of $\lambda^{(m)}\mu^{(k)}$ in the third sum in \eqref{e11} is:
\begin{align*}
\sum_{i=1}^{L} & \sum_{j>m+k}
\frac{(-1)^{p_{i,a}+p_{i,b}+j+m+1}}{j\binom{j-1}{m}} \Bigl(\partial^{(j-m-k-1)} \bigl((\phi_{i}a)_{(j+\N)}b \bigr)\Bigr) (\psi_{i}c) \\
&= \sum_{j>m+k} \frac{(-1)^{j+m+1}}{j\binom{j-1}{m}} \mu_{(m+k-j)} (\mu_{(j)} \otimes I) \N_{13} (a \otimes b \otimes c) \,.
\end{align*}

Combining the above results, we see that identities \eqref{e11} for all $a,b,c\in\V$ are equivalent to the following collection of identities 
$(m,k\in\ZZ_+)$:
\begin{equation}\label{e12}
\begin{split}
&\mu_{(m)} (I \otimes \mu_{(k)}) = \mu_{(k)} (I \otimes \mu_{(m)}) P_{12} + \sum_{j=0}^m \binom{m}{j} \mu_{(m+k-j)} (\mu_{(j)} \otimes I) \\
&\quad+\sum_{j>m} \frac1j \mu_{(m-j)} (I \otimes \mu_{(k+j)}) \N_{12}
-\sum_{j>k} \frac1j \mu_{(k-j)} (I \otimes \mu_{(m+j)}) \N_{12} P_{12} \\
&\quad+\sum_{j>m+k} \frac{(-1)^{j+m+1}}{j\binom{j-1}{m}} \mu_{(m+k-j)} (\mu_{(j)} \otimes I) \N_{13} \,.
\end{split}
\end{equation}

Now we derive \eqref{e12} from the Borcherds identity \eqref{borcherds2} for $n=0$ and $m,k\geq 0$.
Recall that as maps on the associated graded, $\N$ and $\mu_{(r)}$ for $r\ge0$ have degree $-1$, while $\mu_{(r)}$ for $r<0$ have degree $0$.
Hence, all terms of \eqref{borcherds2} induce corresponding maps on $(\gr V)^{\otimes 3}$.
Using \leref{2lm}, we determine the maps of degree $-2$ that send $\gr^l(V^{\otimes 3})$ to $\gr^{l-2}V$
as follows:
\begin{align*}
\sum_{j\in \ZZ_{+}} &(-1)^{j}\mu_{(m-j)}(I\otimes \mu_{(k+j)})\binom{\N_{12}}{j} \\
&=\mu_{(m)}(I\otimes \mu_{(k)})
-\sum_{j>m}\frac{1}{j}\mu_{(m-j)}(I\otimes \mu_{(k+j)})\N_{12}\, , \\
-\sum_{j\in \ZZ_{+}} &(-1)^{j}\mu_{(k-j)}(I\otimes \mu_{(m+j)}) \binom{\N_{12}}{j} P_{12} \\
&=-\mu_{(k)}(I\otimes \mu_{(m)}) P_{12}
+\sum_{j>k}\frac{1}{j}\mu_{(k-j)}(I\otimes \mu_{(m+j)}) \N_{12} P_{12} \, ,
\end{align*}
and
\begin{align*}
\sum_{j\in \ZZ_{+}} &\mu_{(m+k-j)}( \mu_{(j)}\otimes I) \binom{m+\N_{13}}{j}=\sum_{j=0}^{m} \binom{m}{j}\mu_{(m+k-j)}( \mu_{(j)}\otimes I)\\
&+\sum_{j>m+k}\frac{(-1)^{j+m+1}}{j\binom{j-1}{m}}\mu_{(m+k-j)}( \mu_{(j)}\otimes I) \N_{13}\,.
\end{align*}
Therefore, the degree $-2$ part of \eqref{borcherds2} for $n=0$ and $m,k\geq 0$ is equivalent to \eqref{e12}.
This completes the proof of \leref{lemproof4}.
\end{proof}

\begin{lemma}\label{lemproof5}
The $\pi_k$-projections of the Jacobi identity hold for $k=2,3,4$.
\end{lemma}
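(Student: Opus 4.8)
The plan is to insert the explicit projection formulas of Lemmas \ref{lemproof1}, \ref{lemproof2} and \ref{lemproof3} into the $\pi_k$-projection of the Jacobi identity
\[
\pi_k\{a_{\lambda}\{b_{\mu}c\}\}=\pi_k\{\{a_{\lambda}b\}_{\lambda+\mu}c\}+(-1)^{p_{a,b}}\pi_k\{b_{\mu}\{a_{\lambda}c\}\}
\]
for $k=2,3,4$, and to observe that in each case the resulting identity collapses to one of the basic properties of the braiding map collected in \prref{co3}. The point that makes this mechanical is that, for a fixed $k\in\{2,3,4\}$, the terms produced by the three sides of the Jacobi identity all carry a common ``outer'' factor which the surviving $\partial$ does not reach: for $k=2$ it is $\bigl(\tfrac{1}{\lambda+\partial}\phi_i a\bigr)(\,\cdot\,)$, for $k=4$ it is $\bigl(\tfrac{1}{(\lambda+\mu)+\partial}(\,\cdot\,)\bigr)(\psi_i c)$, and for $k=3$ it is a product involving $\tfrac{1}{\mu+\partial}\phi_i b$ or $\tfrac{1}{\mu+\partial}\psi_i b$. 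Thus it suffices to compare the ``inner'' factors term by term in $i$ once the super-signs are normalized.

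For $k=4$ I would substitute the derivation identity of $\phi_i$ on the products (\prref{co3}\eqref{co3.3}),
\[
\phi_i\bigl([a_{\lambda}b]\bigr)=[(\phi_i a)_{\lambda}b]+(-1)^{p_{i,a}}[a_{\lambda}(\phi_i b)]\,,
\]
into the formula for $\pi_4\{\{a_{\lambda}b\}_{\lambda+\mu}c\}$ and distribute $\tfrac{1}{(\lambda+\mu)+\partial}$ over the two terms; one term reproduces $\pi_4\{a_{\lambda}\{b_{\mu}c\}\}$ and the other reproduces $-(-1)^{p_{a,b}}\pi_4\{b_{\mu}\{a_{\lambda}c\}\}$, which rearranges to the $\pi_4$-projected Jacobi identity. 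For $k=2$, cancelling the common factor $\tfrac{1}{\lambda+\partial}\phi_i a$ and using $(-1)^{p_{a,b}}(-1)^{p_{a,b}+p_{i,a}+p_{i,b}}=(-1)^{p_{i,a}+p_{i,b}}$ reduces the claim to
\[
\psi_i\bigl([b_{\mu}c]\bigr)=[(\psi_i b)_{\mu}c]+(-1)^{p_{i,b}}[b_{\mu}(\psi_i c)]\,,
\]
which is the derivation property of $\psi_i$ (\prref{co3}\eqref{co3.4}).

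The case $k=3$ is the only one requiring more than a single ingredient. Here I would first expand $\psi_i([a_{\lambda}c])$ appearing in the formula for $\pi_3\{b_{\mu}\{a_{\lambda}c\}\}$ by \prref{co3}\eqref{co3.4}; one of the two resulting pieces cancels $\pi_3\{a_{\lambda}\{b_{\mu}c\}\}$, and the remaining identity to be verified is
\[
\sum_{i=1}^{L}(-1)^{p_{b,c}+p_{i,a}+p_{i,c}}[(\phi_i a)_{\lambda}c]\Bigl(\tfrac{1}{\mu+\partial}\psi_i b\Bigr)=\sum_{i=1}^{L}(-1)^{p_{a,b}+p_{i,b}}\Bigl(\tfrac{1}{\mu+\partial}\phi_i b\Bigr)[(\psi_i a)_{\lambda}c]\,.
\]
Commuting $[(\phi_i a)_{\lambda}c]$ past $\tfrac{1}{\mu+\partial}\psi_i b$ (commutativity of the product, \prref{pro3.5}) and simplifying the resulting Koszul sign turns the left-hand side into $\sum_i(-1)^{p_{a,b}+p_{i,b}+p_i}\bigl(\tfrac{1}{\mu+\partial}\psi_i b\bigr)[(\phi_i a)_{\lambda}c]$; cancelling the common sign $(-1)^{p_{a,b}+p_{i,b}}$ term by term, this is precisely the symmetry $\sum_i\phi_i\otimes\psi_i=\sum_i(-1)^{p_i}\psi_i\otimes\phi_i$ of $\N$ (\prref{co3}\eqref{co3.11}), with the two tensor factors fed, respectively, into $b$ inside $\tfrac{1}{\mu+\partial}(\,\cdot\,)$ and into $a$ inside $[(\,\cdot\,)_{\lambda}c]$. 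Finally, since every step above keeps us inside the single summand $\V_k$, no projection-mixing occurs and it is genuinely the $\pi_k$-projected identity that has been checked.

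I expect the only real difficulty to be the super-sign bookkeeping in the $k=3$ case, where the derivation property, the commutativity of the product, and the symmetry of $\N$ all enter and the Koszul signs have to be tracked through the reorderings of $\phi_i,\psi_i,a,b,c$; one also has to be careful that the $\partial$ appearing in $\tfrac{1}{\lambda+\partial}$, $\tfrac{1}{\mu+\partial}$ and $\tfrac{1}{(\lambda+\mu)+\partial}$ acts only where indicated, so that the term-by-term comparison in $i$ is legitimate.
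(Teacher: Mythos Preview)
Your proposal is correct and follows essentially the same approach as the paper's proof: in each case $k=2,3,4$ you substitute the projection formulas from Lemmas \ref{lemproof1}--\ref{lemproof3} and reduce the resulting identity to the derivation property of $\phi_i$ or $\psi_i$ (\prref{co3}\eqref{co3.3},\eqref{co3.4}), together with commutativity of the product and the symmetry of $\N$ (\prref{co3}\eqref{co3.11}) in the $k=3$ case. The only cosmetic difference is the order of operations for $k=3$: the paper first uses commutativity and the symmetry of $\N$ to rewrite $\pi_3\{\{a_{\lambda}b\}_{\lambda+\mu}c\}$ and then invokes the derivation property, whereas you first expand $\psi_i([a_{\lambda}c])$ and then apply commutativity and symmetry to the remaining terms; the ingredients and the substance of the argument are identical.
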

\begin{proof}
By Lemmas \ref{lemproof1}--\ref{lemproof3}, the $\pi_2$-projection of the Jacobi identity is equivalent to:
\begin{align*}
\sum_{i=1}^{L} &(-1)^{p_{i,a}} \Bigl(\frac{1}{\lambda+\partial}\phi_{i}a\Bigr)\psi_{i}([b_{\mu}c])
= \sum_{i=1}^{L} (-1)^{p_{i,a}} \Bigl(\frac{1}{\lambda+\partial }\phi_{i} a\Bigr) \bigl[ (\psi_{i} b)_{\mu}c \bigr] \\
&+\sum_{i=1}^{L} (-1)^{p_{i,a}+p_{i,b}} \Bigl(\frac{1}{\lambda+\partial}\phi_{i}a\Bigr) \bigl[b_{\mu}(\psi_{i}c) \bigr] \,.
\end{align*}
This equation follows from the fact that $\psi_{i}$ is a derivation of all products $\mu_{(n)}$ (see \prref{co3}(\ref{co3.4})),
and hence also of the bracket $[b_{\mu}c]$:
\[
\psi_{i}([b_{\mu}c]) = [(\psi_{i} b)_{\mu}c] + (-1)^{p_{i,b}} [b_{\mu}(\psi_{i}c)] \,.
\]

Similarly, the $\pi_3$-projection of the Jacobi identity is equivalent to:
\begin{align*}
\sum_{i=1}^{L} &(-1)^{p_{a,b}+p_{i,a}+p_{i,b}} \Bigl(\frac{1}{\mu+\partial}\phi_{i}b\Bigr) \bigl[a_{\lambda}(\psi_{i}c)\bigr] \\
= &-\sum_{i=1}^{L} (-1)^{p_{b,c}+p_{i,a}+p_{i,c}} \bigl[(\phi_{i} a)_{\lambda}c\bigr] \Bigl(\frac{1}{\mu+\partial }\psi_{i} b\Bigr) \\
&+\sum_{i=1}^{L} (-1)^{p_{a,b}+p_{i,b}} \Bigl(\frac{1}{\mu+\partial}\phi_{i}b\Bigr)\psi_{i}([a_{\lambda}c]) \,.
\end{align*}
Using commutativity of the product and the symmetry of $\N$ (\prref{co3}\eqref{co3.11}), we rewrite the second sum above as
\[
-\sum_{i=1}^{L} (-1)^{p_{a,b}+p_{i,b}} \Bigl(\frac{1}{\mu+\partial}\phi_{i}b\Bigr) \bigl[(\psi_{i} a)_{\lambda}c\bigr] \,.
\]
Hence, the $\pi_3$-projection of the Jacobi identity follows again from the fact that $\psi_{i}$ is a derivation.

Finally, the $\pi_4$-projection of the Jacobi identity is equivalent to:
\begin{align*}
\sum_{i=1}^{L} &(-1)^{p_{i,b}} \Bigl(\frac{1}{(\lambda+\mu)+\partial} \bigl[a_{\lambda}(\phi_{i}b)\bigr]\Bigr)(\psi_{i}c) \\
&= \sum_{i=1}^{L} (-1)^{p_{i,a}+p_{i,b}} \Bigl(\frac{1}{(\lambda+\mu)+\partial}\phi_{i}([a_{\lambda}b])\Bigr) (\psi_{i}c) \\
&-\sum_{i=1}^{L} (-1)^{p_{i,a}+p_{i,b}} \Bigl(\frac{1}{(\lambda+\mu)+\partial} \bigl[(\phi_{i}a)_{\lambda}b\bigr]\Bigr)(\psi_{i}c) \,,
\end{align*}
and it follows from the fact that $\phi_{i}$ is a derivation (\prref{co3}(\ref{co3.3})).
\end{proof}

\begin{lemma}\label{lemproof6}
The $\pi_k$-projections of the Jacobi identity hold for $k=5,6,7$.
\end{lemma}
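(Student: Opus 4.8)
The projections $\pi_k$, $k=5,6,7$, of all three terms of the Jacobi identity have already been computed in Lemmas \ref{lemproof1}--\ref{lemproof3}. The plan is, for each such $k$, to substitute these formulas into
\[
\pi_k\{a_\lambda\{b_\mu c\}\} = \pi_k\{\{a_\lambda b\}_{\lambda+\mu}c\} + (-1)^{p_{a,b}}\,\pi_k\{b_\mu\{a_\lambda c\}\}
\]
and to verify the resulting equality by elementary operations: relabeling the summation indices $i\leftrightarrow j$, using the commutativity and associativity of $\mu_{(-1)}$ on $\gr V$ (\prref{pro3.5}) to reorder the factors of the form $\frac1{\lambda+\partial}\phi a$, $\frac1{\mu+\partial}\phi b$ and $\psi c$, and using that the $\phi_i,\psi_j$ mutually commute (\prref{co3}\eqref{co3.2}), that $\partial$ commutes with all $\phi_i,\psi_i$ (\prref{co3}\eqref{co3.1}, with $\partial$ in place of $T$), and that $\N$ is symmetric (\prref{co3}\eqref{co3.11}). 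In each of the three cases exactly one of the three projections vanishes, so only a single identity has to be checked, and the only real work is verifying that the accumulated Koszul signs cancel.

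Concretely, for $k=5$ we have $\pi_5\{\{a_\lambda b\}_{\lambda+\mu}c\}=0$, and after relabeling $i\leftrightarrow j$ in $\pi_5\{b_\mu\{a_\lambda c\}\}$, commuting the factors $\frac1{\lambda+\partial}\phi_j a$ and $\frac1{\mu+\partial}\phi_i b$, and commuting $\psi_i\psi_j$ into $\psi_j\psi_i$ via \prref{co3}\eqref{co3.2}, one arrives at $\pi_5\{a_\lambda\{b_\mu c\}\}$; the sign identity needed is $(p_i+p_b)(p_j+p_a)\equiv p_{i,j}+p_{i,a}+p_{j,b}+p_{a,b}$. For $k=6$ we have $\pi_6\{b_\mu\{a_\lambda c\}\}=0$, and relabeling $i\leftrightarrow j$ in $\pi_6\{\{a_\lambda b\}_{\lambda+\mu}c\}$ and rewriting $\psi_j\phi_i b=(-1)^{p_{i,j}}\phi_i\psi_j b$ via \prref{co3}\eqref{co3.2} makes it coincide termwise with $\pi_6\{a_\lambda\{b_\mu c\}\}$. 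Neither of these two cases requires the symmetry of $\N$.

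The case $k=7$ is the one I expect to demand a little more care, and is where the symmetry of the braiding map genuinely enters. Here $\pi_7\{a_\lambda\{b_\mu c\}\}=0$, so one must show $\pi_7\{\{a_\lambda b\}_{\lambda+\mu}c\} = -(-1)^{p_{a,b}}\pi_7\{b_\mu\{a_\lambda c\}\}$. Relabeling $i\leftrightarrow j$ on the right-hand side and using commutativity of $\mu_{(-1)}$ to move $\psi_i\phi_j a$ to the front matches the $j$-indexed data with that of the left-hand side, but leaves the $i$-indexed factors with $\phi_i$ acting on $b$ and $\psi_i$ acting on $a$ — the reverse of what appears on the left. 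Pulling the operators $\frac1{\mu+\partial}$ (via \prref{co3}\eqref{co3.1}) and $\phi_j$ (via \prref{co3}\eqref{co3.2}) out of the way, one is reduced to comparing $\sum_i(\phi_i\tilde a)(\psi_i\tilde b)$ with $\sum_i(\psi_i\tilde a)(\phi_i\tilde b)$ for appropriate vectors $\tilde a,\tilde b$, and these agree up to the sign $(-1)^{p_i}$ by the symmetry relation $\N P=P\N$ (equivalently \prref{co3}\eqref{co3.11}) together with commutativity of $\mu_{(-1)}$. After the usual sign bookkeeping the right-hand side collapses exactly onto the left-hand side. This verifies the $\pi_k$-projections of the Jacobi identity for $k=5,6,7$, completing the proof of the lemma and, together with Lemmas \ref{lemproof4} and \ref{lemproof5}, the proof of \prref{pro3.11}.
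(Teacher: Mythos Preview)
Your proposal is correct and follows essentially the same route as the paper's proof: for each $k\in\{5,6,7\}$ one substitutes the projections from Lemmas \ref{lemproof1}--\ref{lemproof3}, relabels $i\leftrightarrow j$, and reduces the identity to commutativity of $\mu_{(-1)}$ together with $[\psi_i,\psi_j]=0$ (for $k=5$), $[\phi_i,\psi_j]=0$ (for $k=6$), and commutativity, $[\phi_i,\phi_j]=0$, and the symmetry of $\N$ (for $k=7$). Your explicit tracking of the Koszul signs is a welcome addition but does not change the argument.
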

\begin{proof}
By Lemmas \ref{lemproof1}--\ref{lemproof3}, the $\pi_5$-projection of the Jacobi identity is equivalent to:
\begin{align*}
\sum_{i,j=1}^{L} & (-1)^{p_{j,a}+p_{i,b}+p_{j,b}} \Bigl(\frac{1}{\lambda+\partial}\phi_{j}a\Bigr) \Bigl(\frac{1}{\mu+\partial}\phi_{i}b\Bigr) (\psi_{j}\psi_{i}c) \\
&= \sum_{i,j=1}^{L} (-1)^{p_{a,b}+p_{i,a}+p_{j,a}+p_{j,b}} \Bigl(\frac{1}{\mu+\partial}\phi_{j}b\Bigr) \Bigl(\frac{1}{\lambda+\partial}\phi_{i}a\Bigr) 
(\psi_{j}\psi_{i}c) \,,
\end{align*}
which holds due to the commutativity of the product and the property $[\psi_i,\psi_j]=0$ (see \prref{co3}\eqref{co3.2}).

Similarly, the $\pi_6$-projection of the Jacobi identity is equivalent to:
\begin{align*}
\sum_{i,j=1}^{L} &(-1)^{p_{j,a}+p_{i,b}} \Bigl(\frac{1}{(\lambda+\mu)+\partial} \Bigl(\Bigl(\frac{1}{\lambda+\partial}\phi_{j}a\Bigr)(\psi_{j}\phi_{i}b)\Bigr)\Bigr)(\psi_{i}c) \\
&= \sum_{i,j=1}^{L} (-1)^{p_{i,a}+p_{j,b}+p_{i,j}} \Bigl(\frac{1}{(\lambda+\mu)+\partial} \Bigl(\Bigl(\frac{1}{\lambda+\partial }\phi_{i} a\Bigr) (\phi_{j}\psi_{i}b)\Bigr)\Bigr) (\psi_{j}c) \,,
\end{align*}
and this follows from $[\phi_i,\psi_j]=0$ (\prref{co3}\eqref{co3.2}).

Finally, the $\pi_7$-projection of the Jacobi identity is equivalent to:
\begin{align*}
0 &=-\sum_{i,j=1}^{L} (-1)^{p_{i,a}+p_{j,a}+p_{j,b}} \Bigl(\frac{1}{(\lambda+\mu)+\partial}\Bigl((\phi_{j}\phi_{i}a) \Bigl(\frac{1}{\mu+\partial }\psi_{i} b\Bigr)\Bigr)\Bigr)(\psi_{j}c) \\
&+\sum_{i,j=1}^{L} (-1)^{p_{a,b}+p_{i,a}+p_{j,b}} \Bigl(\frac{1}{(\lambda+\mu)+\partial} \Bigl(\Bigl(\frac{1}{\mu+\partial}\phi_{j}b\Bigr)(\psi_{j}\phi_{i}a)\Bigr)\Bigr)(\psi_{i}c) \,.
\end{align*}
This follows from the commutativity of the product, the symmetry of $\N$, and $[\phi_i,\phi_j]=0$ (see \prref{co3}, parts \eqref{co3.11} and \eqref{co3.2}).
\end{proof}

This finishes the proof of \prref{pro3.11}. Taken together, Propositions \ref{pro3.5}--\ref{pro3.11} prove \thref{thm3.3}.

\section{Examples of Non-local Poisson Vertex Algebras}\label{sec4}

In this section, we present three examples of non-local Poisson vertex algebras.
The first two are obtained as associated graded of logarithmic vertex algebras from \cite{BV}. 
The third example is the potential Virasoro--Magri non-local PVA \cite{DK}, which we realize as the
associated graded of a new logVA.

\subsection{Associated graded of the potential free boson logVA}\label{ex4.1} 
In \cite[Section 4.2]{BV},  we constructed a logVA structure on the vector space 
\[
V=\mathbb{C}[x_{0},x_{1},x_2,\dots] \,,
\]
with a vacuum vector $\vac=1$, a translation operator $T\in \End(V)$ defined by 
\[
T1=0 \,, \qquad T(x_n)=(n+1)x_{n+1} \qquad (n\ge0)\,, 
\]
a braiding map 
\[
\N = -\partial_{x_0} \otimes \partial_{x_0} \,,
\]
and a state-field correspondence satisfying  
\begin{equation*}
Y(x_0,z) = \sum_{n=0}^\infty x_n z^n + \sum_{n=1}^\infty \partial_{x_n} \frac{z^{-n}}{-n}+ \partial_{x_0} \ze  \,.
\end{equation*}

The space of polynomials is graded by $\deg x_n = 1$ for $n\ge0$; the induced increasing filtration of $V$ 
is given by
\[
\F^{m}V= \Span\bigl\{x_{n_1}x_{n_2} \cdots x_{n_r} \,\big|\, r,n_i\in\ZZ_+ \,, \; r\le m\bigr\} \,, \qquad m\ge0
\]
(with the case $r=0$ corresponding to the empty product equal to $1$).
It is easy to check that this filtration satisfies the conditions of \deref{fil},
since by \cite[Proposition 2.17]{BDSK} it is enough to check them only for the generator $x_0$.
Therefore,
\[
\gr V \cong \mathbb{C}[x_{0},x_{1},x_2,\dots]
\]
is a non-local PVA with the usual commutative associative product,
and it is generated by $x_0$ as a differential algebra because $x_n=\partial^{(n)} x_0$.

To find the $\la$-bracket on $\gr V$, note that it is determined by $\{{x_{0}}_{\lambda}x_0\}$, since all other $\la$-brackets
can be obtained after applying the sesqui-linearity and Leibniz rule.
We have $\mu_{(n)}(x_0 \otimes x_0) = 0$ for $n\ge0$, as this is the coefficient of $z^{-n-1}$ in $Y(x_0,z)x_0$.
First, we calculate this $\la$-bracket in $V$, using \eqref{eq2.1} and \eqref{eq2.1b}:
\begin{align*}
\{{x_{0}}_{\lambda}x_0\} &= \mu_{(-1)}\Bigl(\N\, \Bigl(\frac{1}{\lambda+\partial }\,  x_0\otimes x_0\Bigr)\Bigr)
= -\mu_{(-1)}\Bigl(\frac{1}{\lambda+\partial }\,  1\otimes 1 \Bigr) \\
&= -\frac1\la \mu_{(-1)}(1 \otimes 1) = -\frac1\la 1\,.
\end{align*}
However, since $x_0\in\F^1 V$, its coset $x_0+\F^0 V$ is in $\gr^1 V$, and in $\gr V$ the induced $\la$-bracket becomes (cf.\ \eqref{e1.8}):
\[
\bigl\{ (x_{0}+\F^1 V)_{\lambda} (x_{0}+\F^1 V) \bigr\} = \{{x_{0}}_{\lambda}x_0\} + \F^0 V = 0 \in\gr^1 V \,.
\]
Therefore, the $\la$-bracket on the whole $\gr V$ is trivial.

The above situation is similar to what happens for ordinary vertex algebras generated by a Lie conformal algebra; in particular, for the free boson vertex algebra
(see e.g.\ \cite[Lemma 2.18]{BDSK}). To remedy it, as in \cite{BDSK}, we consider the central element $K$ of the free boson LCA (\exref{cur2}) 
to be in $\F^1 V$ instead of $\F^0 V$. We introduce the logVA
\[
V_K=\mathbb{C}[K,x_{0},x_{1},x_2,\dots] \,,
\]
with a vacuum vector $\vac=1$, a translation operator $T\in \End(V)$ defined by 
\[
T1=TK=0 \,, \qquad T(x_n)=(n+1)x_{n+1} \qquad (n\ge0)\,, 
\]
a braiding map 
\[
\N = -\frac{1}{2}K\partial_{x_0} \otimes \partial_{x_0}-\frac{1}{2}\partial_{x_0} \otimes K \partial_{x_0} \,,
\]
and a state-field correspondence such that $Y(K,z)=K$ and 
\begin{equation*}
Y(x_0,z) = \sum_{n=0}^\infty x_n z^n + \sum_{n=1}^\infty K\partial_{x_n} \frac{z^{-n}}{-n}+ K\partial_{x_0} \ze  \,.
\end{equation*}

We define an increasing filtration of $V_K$ by letting $\deg K=\deg x_n = 1$ for all $n\ge0$, i.e.,
\[
\F^{m} V_K = \Span\bigl\{K^s x_{n_1} \cdots x_{n_r} \,\big|\, s,r,n_i\in\ZZ_+ \,, \; s+r\le m\bigr\} \,, \qquad m\ge0 \,.
\]
Then
\[
\gr V_K \cong \mathbb{C}[K,x_{0},x_{1},x_2,\dots]
\]
is graded by $\deg K=\deg x_n = 1$. The filtration of $V_K$ satisfies the conditions of \deref{fil};
hence $\gr V_K$ is a non-local PVA. As above, the product in $\gr V_K$ is the usual product in the polynomial algebra,
and $\gr V_K$ is generated by $K$ and $x_0$ as a differential algebra, where $\partial K=0$ and $x_n=\partial^{(n)} x_0$.

We have in $V_K$:
\[
\{{x_{0}}_{\lambda}x_0\} = -\frac1\la K \,,
\]
which induces the unique $\la$-bracket in $\gr V_K$ determined by (cf.\ \eqref{e1.8}):
\[
\bigl\{ (x_{0}+\F^1 V_K)_{\lambda} (x_{0}+\F^1 V_K) \bigr\} = \{{x_{0}}_{\lambda}x_0\} + \F^0 V_K = -\frac1\la (K + \F^0 V_K) \,.
\]
Moreover, the element $K + \F^0 V_K \in\gr^1 V_K$ is central, i.e., it has a trivial $\la$-bracket with any element of $\gr V_K$.
The quotient
\[
\V := \gr V_K / (K-1)\gr V_K \cong \mathbb{C}[x_{0},x_{1},x_2,\dots]
\]
is a non-local PVA isomorphic to the potential free boson non-local PVA defined in Example \ref{ex2.11}.
Notice that while $\gr V_K$ is graded, $\V$ is not graded as a non-local PVA, because $\deg K=1$ but $\deg 1=0$.

\subsection{Associated graded of the Gurarie--Ludwig logVA}\label{ex4.3} 

In this subsection, we consider the Gurarie--Ludwig's logVA constructed in \cite[Section 4.4]{BV},
which was motivated by an example of logarithmic conformal field theory from \cite{GL1,GL2,G2}.

This logVA $V$ is a vector superspace linearly spanned by the vacuum vector $\vac$ and monomials of the form
\begin{equation*}
a^{1}_{n_1} \cdots a^{r}_{n_r} \vac \quad\text{where}\quad
n_i \le -2 \,, \; a^i\in\{L,\ell,\xi,\bar{\xi}\} \,, \; 1\le i\le r \,.
\end{equation*}
The parity is determined by letting all $L_n$, $\ell_n$ be even and all $\xi_n$, $\bar\xi_n$ be odd.
In order to recall the logVA structure on $V$, we define odd linear operators $\eta,\bar{\eta}\in \End(V)$ by $\eta\vac=\bar{\eta}\vac=0$ and
\[[\eta, a_{n}]=(\eta a)_{n}\,, \quad [\bar{\eta}, a_{n}]=(\bar{\eta} a)_{n}\,,  \qquad a\in \{L,\ell,\xi,\bar{\xi}\} , \;\;
n\le -2 \,,\]
where
\[
\eta\ell = -2\xi \,, \qquad \bar\eta\ell = 2\bar\xi \,, \qquad \eta\bar\xi = \bar\eta\xi = L \,,
\]
and
\[
\eta^2=\bar\eta^2=0 \,, \qquad \eta\bar\eta=-\bar\eta\eta \,.
\]
Then $\eta$ and $\bar\eta$ are odd derivations of $V$. 
The superspace $V$ is a module over the associative superalgebra $\A$ with generators
\[
\bigl\{ L_{n},\,\ell_{n},\, \xi_{n},\, \bar{\xi}_{n},\, \eta,\, \bar{\eta} \,\big|\, n\in \ZZ \bigr\} \,,
\]
subject to the relations listed in \cite[Lemma 4.5]{BV}.

The translation operator on $V$ is $T=L_{-1}$. More explicitly, $T$ can be determined by $T\vac=0$ and the commutators
\begin{align*}
[T,L_n] &= -(n+1)L_{n-1} \,, & &[T,\ell_{n}]=-(n+1)\ell_{n-1} \!-\! \xi_{n-1}\bar{\eta} \!-\! \bar{\xi}_{n-1}\eta \,, \\
[T, {\xi}_{n}]&=-(n+1){\xi}_{n-1}+\frac{1}{2}L_{n-1}\eta\,, &
&[T, \bar{\xi}_{n}]=-(n+1)\bar{\xi}_{n-1}-\frac{1}{2}L_{n-1}\bar{\eta}\,,
\end{align*}
which follow from the relations in $\A$.
The braiding map on $V$ is defined by
\begin{equation}\label{equ2.14s}
\N = \frac{1}{2}(\bar{\eta} \otimes \eta -  \eta \otimes \bar\eta) \,.
\end{equation}
Explicitly, we have:
\begin{align*}
\N(\bar\xi\otimes\xi) &= - \N(\xi\otimes\bar\xi) = \frac12 L \otimes L \,, &
\N(\ell\otimes\ell) &= 2\xi\otimes\bar\xi-2\bar\xi\otimes\xi \,, \\
\N(\ell\otimes\xi) &= \xi\otimes L \,, & \N(\xi\otimes\ell) &= L\otimes\xi \,, \\
\N(\ell\otimes\bar\xi) &= \bar\xi\otimes L \,, & \N(\bar\xi\otimes\ell) &=  L\otimes\bar\xi \,, 
\end{align*}
and $\N=0$ 
on all other tensors not listed,
where we identify $a$ with $a_{-2}\vac\in V$ for $a\in\{L,\ell,\xi,\bar\xi\}$.

The modes of the generators are given by
\begin{equation*}
a_{(n+\N)} = a_{n-1} \quad\text{for}\quad n\in\ZZ \,, \;\; a\in\{L,\ell,\xi,\bar\xi\} \, .
\end{equation*}
The products $\mu_{(n)}(a \otimes b) = a_{(n+\N)} b$ for $n\ge0$ and ordered pairs $a,b\in\{L,\ell,\xi,\bar\xi\}$ follow from the relations in $\A$,
and are given as follows \cite{BV}:
\begin{equation}\label{equ2.14a}
\begin{split}
L_{0}b &=2b\, , \quad L_{n}b=0\,  , \qquad n>0 \,, \; b\in\{L,\xi,\bar\xi\} \,. \\
L_{0}\ell &=2\ell+L \, , \quad L_{2}\ell=\be\vac \,, \quad L_1\ell=L_{n}\ell=0\,  , \qquad n>2 \,, \\
\xi_2\bar\xi &= \frac{\be}2\vac \,, \quad \xi_1\bar\xi = 0 \,, \quad \xi_0\bar\xi = \frac14(4\ell+L) \,, \quad \xi_{-1} \bar\xi = \frac{T}8(4\ell+L) \,, \\
\ell_0\xi &= \frac12 \xi \,, \quad \ell_{-1}\xi = \frac34 T\xi \,, \quad \ell_0\bar\xi =\frac12 \bar\xi \,, \quad \ell_{-1}\bar\xi = \frac34 T\bar\xi 
\,, \\
\ell_0\ell &=  \ell \,, \quad \ell_{-1}\ell = \frac12 T\ell \,, \\
\xi_m\bar\xi &= 0 \,, \quad \ell_n b = 0 \,, \qquad m>2 \,, \; n>0 \,, \; b\in\{\ell,\xi,\bar\xi\} \,,
\end{split}
\end{equation}
where $\be\in\CC$ is a parameter.
From these formulas for $\mu_{(n)}(a \otimes b)$, the above action of $\N$, and the definition \eqref{eq2.1}, \eqref{eq2.1b},
we find on $V$ the following $\lambda$-brackets (where $\partial=T$):
\begin{equation}\label{equ2.14b}
\begin{split}
\{L_{\lambda}L\}&=(2\lambda+\partial)L\, , \quad 
\{L_{\lambda}\xi\}=(2\lambda+\partial)\xi\, , \quad
\{L_{\lambda}\bar{\xi}\}=(2\lambda+\partial)\bar{\xi}\, , \\
\{L_{\lambda}\ell\} &=(2\lambda+\partial)\ell+\la L + \frac{\lambda^3}{6} \be\vac \, , \\
\{\xi_{\lambda}\bar{\xi}\}&=\frac{1}{8}(2\lambda+\partial)(4\ell+L) + \frac{\lambda^3}{12} \be\vac - \frac{1}{2}\Bigl(\frac{1}{\lambda+\partial}L\Bigr)L \, , \\
\{\ell_{\lambda}\xi\}&= \frac14(2\lambda+3\partial)\xi + \Bigl(\frac{1}{\lambda+\partial}\xi\Bigr)L\, , \\
\{\ell_{\lambda}\bar{\xi}\} &= \frac14(2\lambda+3\partial)\bar\xi +\Bigl(\frac{1}{\lambda+\partial}\bar{\xi}\Bigr)L\, , \\
\{\ell_{\lambda}\ell\}&=\frac12(2\lambda+\partial)\ell+2\Bigl(\frac{1}{\lambda+\partial}\xi\Bigr)\bar{\xi}-2\Bigl(\frac{1}{\lambda+\partial}\bar{\xi}\Bigr){\xi}\,.
\end{split}
\end{equation}

In order to introduce a filtration on $V$, we define the weighted degree of a monomial as
\[
\deg (a^{1}_{n_1} \cdots a^{r}_{n_r}\vac)=r+s \,,
\]
where $s$ is the number of factors in the product with $a^i\in\{\ell,\bar{\xi}\}$. In other words, we let $\deg\vac=0$ and
\[
\deg L_n = \deg \xi_n = 1 \,, \quad \deg \ell_n = \deg \bar\xi_n = 2 \,, \qquad n\le-2.
\]
Then we define the filtration 
\[
\F^{m}V = \Span\bigl\{ v=a^{1}_{n_1} \cdots a^{r}_{n_r}\vac \,\big|\, \deg v\leq m \bigr\}, \qquad m\ge0\,.
\]
In particular, we have
\[
\F^0 V = \CC\vac \,, \qquad \F^1 V = \Span\{\vac,L,\xi\} \,, \qquad \ell,\bar\xi\in\F^2 V \,.
\]
It is easy to check that this filtration satisfies the conditions of \deref{fil},
since by \cite[Proposition 2.17]{BDSK} it is enough to check them only for the generators $L,\ell,\xi,\bar\xi$.

Therefore, $\gr V$ is a non-local PVA, generated as a differential algebra by the images of $L,\ell,\xi,\bar\xi$,
which we denote by the same letters as the corresponding elements of $V$.
One can show that $V$ has a Poincar\'e--Birkhoff--Witt basis consisting of ordered monomials; hence
\begin{equation*}
\gr V \cong \CC\bigl[\partial^i L, \partial^i \ell, \partial^i \xi, \partial^i \bar{\xi}\,\bigr]_{i\in\ZZ_+}
\end{equation*}
is an algebra of differential polynomials in the generators. The grading in $\gr V$ is given by:
\[
\deg 1 = \deg\partial = 0 \,, \qquad \deg L = \deg \xi = 1 \,, \qquad \deg \ell = \deg \bar\xi = 2 \,.
\]
The $\lambda$-brackets on $\gr V$ are induced from those on $V$ given by \eqref{equ2.14b},
by keeping only terms of the correct degree so that $\{\cdot_\la\cdot\}$ has degree $-1$ as in \eqref{eq2.1a}.
In this way, we obtain on $\gr V$ the $\lambda$-brackets \eqref{equ2.14} with $K=0$.

If we want to retain the parameter $\be\in\CC$, we need to modify slightly the logVA $V$, 
similarly to the case of the free boson logVA from Section \ref{ex4.1}.
The new logVA will be $V_{K}=\CC[K]\otimes V$, where $K$ is an even element such that $T(K)=0$ and $Y(K,z)=K$.
The braiding map $\N$ remains as before \eqref{equ2.14s}, and we let $\eta(K)=\bar\eta(K)=0$.
The only change in the products \eqref{equ2.14a} is that we replace $\be\vac$ with $K$ in the right-hand side.
We define a grading on $\CC[K]$ by $\deg K=2$; this induces a filtration on $\CC[K]$ and on the tensor product $V_K$.
One checks again that the conditions of \deref{fil} hold. Therefore, we have the non-local PVA
\begin{equation*}
\gr V_K \cong \CC\bigl[K,\partial^i L, \partial^i \ell, \partial^i \xi, \partial^i \bar{\xi}\,\bigr]_{i\in\ZZ_+} \,,
\end{equation*}
in which $\partial K=0$ and $\deg K=2$.
Now the induced $\lambda$-brackets on $\gr V_K$ are given exactly by \eqref{equ2.14}.
Then, for $\be\in\CC$, the quotient
\[
\V_\be := \gr V_K /(K-\be 1)\gr V_K \cong \CC\bigl[\partial^i L, \partial^i \ell, \partial^i \xi, \partial^i \bar{\xi}\,\bigr]_{i\in\ZZ_+}
\]
is the non-local PVA from Example \ref{ex2.13}.
Notice that while $\gr V_K$ is graded, $\V_\be$ is not graded as a non-local PVA, because $\deg K=2$ but $\deg 1=0$.

\subsection{Potential Virasoro--Magri non-local PVA}\label{sec4.3}
In this subsection, we consider the potential Virasoro--Magri non-local PVA from Example \ref{ex2.12} and \cite{DK}.
Recall that, for any central charge $c\in\CC$, this non-local PVA
\[\V_c=\C[u,u',u'', \dots] \]
is an algebra of differential polynomials in the generator $u$, where $u'=\partial u$, $u''=\partial^2 u$, etc.
The $\lambda$-bracket in $\V_c$ is uniquely determined by $\{u_{\lambda}u\}$, which is given by (cf.\ \eqref{equ2.12}):
\begin{equation}\label{eq5.0}
\{u_{\lambda}u\}=-\frac{1}{\lambda}u'-\frac{1}{\lambda+\partial}u'-\frac{\lambda}{12}c\, .
\end{equation}
In particular, by sesqui-linearity we have
\begin{equation}\label{eq5.0b}
\{u'_{\lambda}u'\}=(2\lambda+\partial)u'+\frac{\lambda^3}{12}c\,,
\end{equation}
which is the $\lambda$-bracket of the Virasoro LCA with central charge $c$ (see e.g.\ \cite{K1}).
We will build an example of a filtered logVA $V$ such that $\gr V \cong\V_0$.
Here we only provide an outline of the construction and omit some of the details.

We start by assuming the existence of a logVA $V$, generated by an element $u\in V$ with a $\lambda$-bracket \eqref{eq5.0},
where $\partial$ is replaced by the translation operator $T$.
Note that \eqref{eq5.0} follows from \eqref{eq2.1}, \eqref{eq2.1b} if in $V$ we have that
\begin{equation}\label{eq5.2}
\mu_{(n)}(u\otimes u) = -\delta_{n,1}\frac{c}{12} \vac\,, \qquad n\geq 0 \,,
\end{equation}
and
\begin{equation*}
\S(u\otimes u) = -\vac\otimes u' - u' \otimes\vac \,,
\end{equation*}
where $u'=Tu$. 
We will suppose that $V$ has a derivation $D=\partial_{u}$ such that $Du=\vac$.
Since every derivation 
commutes with $T$, this implies that $Du'=Du''=\cdots=0$.
We will define the braiding map on $V$ by
\begin{equation}\label{eq5.1}
\S:=-D\otimes T-T\otimes D\,. 
\end{equation}
Then the map $\N$ is locally nilpotent on $V\otimes V$, because $D$ is locally nilpotent on $V$.

We can derive the relations satisfied by the modes of $u$ from 
the Borcherds identity \eqref{borcherds2} with $n=0$ applied to $u \otimes u \otimes v$ for arbitrary $v\in V$,
by using \eqref{eq5.2} and \eqref{eq5.1}. We obtain:
\begin{equation}\label{eq5.3}
\begin{split}
\bigl[u_{(m+\S)}, u_{(k+\S)}\bigr] &= \Bigl(\frac{1-\delta_{m,-1}}{m+1}-\frac{1-\delta_{k,-1}}{k+1}\Bigr)u'_{(m+k+1+\S)}\\
&- \delta_{m+k\ge0} \, (m-k)(m+k+1) \frac{c}{24}D^{m+k} \,,
\end{split}
\end{equation}
for all $m,k\in\ZZ$, where
\[
\delta_{n\ge0} = \begin{cases}
1, \;\;\text{ if }\; n\ge 0\,, \\ 0, \;\;\text{ if }\;  n<0\,.
\end{cases}
\]
The modes of $u'$ can be expressed in terms of the modes of $u$ as follows. First, by translation covariance
(see \prref{co}\eqref{cor2.2}, \eqref{cor2.3}), we have for any $v\in V$ and $n\in\ZZ$:
\begin{equation}\label{eq5.6}
\begin{split}
u'_{(n+\S)}v &= [T, u_{(n+\S)}]v
= -n \,\mu_{(n-1)}(u\otimes v) - \mu_{(n-1)}\S(u\otimes v) \\
&=-n \, u_{(n-1+\S)}v+\delta_{n,0} \, Tv+u'_{(n-1+\S)}(Dv) \,.
\end{split}
\end{equation}
Applying this formula 
$N$ times, where $D^Nv=0$, we obtain
\begin{equation}\label{eq5.5}
u'_{(n+\S)}v 
= \delta_{n\geq 0} \, TD^{n}v - \sum_{j=0}^\infty (n-j)u_{(n-1-j+\S)}(D^{j}v) \,.
\end{equation}
Note that the sum over $j$ runs only up to $N-1$, but we wrote it in a form independent of $N$.

The construction of the logVA $V$ now proceeds similarly to the construction of the Gurarie--Ludwig's logVA from \cite[Section 4.4]{BV};
see also \seref{ex4.3} above. We introduce the unital associative algebra $\A$ with generators
\[
\bigl\{ u_{n},\,D,\,T \,\big|\, n\in \ZZ \bigr\} \,,
\]
subject to the relations $(n,m,k\in\ZZ)$:
\begin{equation}\label{eq5.11}
\begin{split}
[D,T] &= 0 \,, \\
[D,u_{n}] &= \delta_{n,-1} 1  \,, \\
[T,u_{n}] &= \delta_{n\geq 0} \, TD^{n} - \sum_{j\geq 0}(n-j)u_{n-1-j} D^{j} \,, \\
[u_{m}, u_{k}] &= \Bigl(\frac{1-\delta_{m,-1}}{m+1}-\frac{1-\delta_{k,-1}}{k+1}\Bigr) [T,u_{m+k+1}] \\
&- \delta_{m+k\ge0} \, (m-k)(m+k+1) \frac{c}{24}D^{m+k} \,,
\end{split}
\end{equation}
where $c\in\CC$ is a parameter.
The algebra $\A$ is $\ZZ$-graded by setting
\[\deg 1=0 \,, \qquad \deg u_{n}=-n \,, \qquad \deg D=-1 \,, \qquad \deg T=1\,.\]
The infinite sums in the relations \eqref{eq5.11} are convergent in the topology of $\A$, in which $\lim_{j\to+\infty} a_j b_j = 0$ for any sequence
$a_j,b_j\in\A$ with $\deg a_j = d+j$ and $\deg b_j=-j$ for some fixed $d\in\ZZ$.

The algebra $\A$ admits a module $V$, generated by an element $\vac\in V$ satisfying the relations
\[u_{n}\vac=D\vac=T\vac=0 \,, \qquad n\geq 0 \,, \]
and all other relations in $V$ follow from these and the relations in $\A$.
The module $V$ is linearly spanned by monomials of the form 
\[u_{-n_1} \cdots u_{-n_r} \vac\,  \qquad (r\in\ZZ_+, \, n_{i}\ge1)\, . \]
The subset of ordered monomials, such that $n_1\ge\cdots\ge n_r\ge1$, is a basis for $V$.
Moreover, $V$ has a $\ZZ_+$-grading compatible with the grading of $\A$, such that $\deg\vac=0$.
In particular, for every homogeneous $v\in V$, we have
\[
u_{n}v=D^nv=0 \,, \qquad n>\deg v \,.
\]

We endow $V$ with the following structure of a logVA. The vacuum vector is $\vac$, the translation operator is $T$, the braiding map $\N$ is given by \eqref{eq5.1}, and the modes of $u:=u_{-1}\vac \in V$ are given by $u_{(n+\N)}=u_n$ for $n\in\ZZ$.
Then the logarithmic field $Y(u,z)$ is determined from \eqref{p-modes1}; explicitly, one finds that
\[
Y(u,z) = \ze T + \sum_{k\in\ZZ_+} \sum_{n\in\ZZ} \frac{1}{k!}\ze^{k}z^{-n-1} \, (\ad T)^{k}(u_n) \, D^{k} \,.
\]
The relations \eqref{eq5.11} imply \eqref{eq5.6} and \eqref{eq5.5}, from which one can derive the translation covariance of the field $Y(u,z)$.
Similarly, \eqref{eq5.3} is equivalent to the Borcherds identity \eqref{borcherds2} with $n=0$ applied to $u\otimes u\otimes v$ for arbitrary $v\in V$,
from which one can deduce the locality of $Y(u,z)$ with itself. There is a subtlety that in the locality condition \eqref{eq2.12}
for $a=b=u$ and $c=v$, the power $N$ will depend on the vector $v$. This is due to the fact that, while $\N$ is locally nilpotent,
not all of its components $\phi_i$ are (namely, $T$ is not locally nilpotent); see \cite[Remark 3.27]{BV}.
Nevertheless, we can apply the Existence Theorem \cite[Theorem 3.2]{BV} to conclude that $V$ is a (generalized) logVA
with a generating field $Y(u,z)$.
This and further generalizations of the notion of a logarithmic vertex algebra will be explored in a future work.

We introduce an increasing filtration of $V$:
\[
\F^{m} V = \Span\bigl\{u_{-n_1} \cdots u_{-n_r}\vac \,\big|\, 0 \le r\le m \,, \; 1\le n_{i} \bigr\} \,, \qquad m\ge0 \,,
\]
which satisfies the conditions of \deref{fil}. Hence, $\gr V$ inherits an induced structure of a non-local PVA. 
It is easy to see that $\gr V \cong \V_0$ is the potential Virasoro--Magri non-local PVA with central charge $c=0$.
In order to obtain any $c\in\CC$, we modify the definition of the logVA $V$ as in Sections \ref{ex4.1} and \ref{ex4.3}.
Namely, we consider the logVA $V_C=\CC[C]\otimes V$, in which $C$ is a central element with 
\[D(C)=T(C)=0 \,, \qquad \deg C=1 \,, \]
and $c\vac$ is replaced with $C$ in the products \eqref{eq5.2}. Then we have
\[
\V_c \cong \gr V_C /(C-c 1)\gr V_C \,.
\]

Finally, we will derive the relations satisfied by the modes of $L:=u'=Tu=u_{-2}\vac$ in the logVA $V$.
Observe that
$\N(L \otimes L) = 0$
and
\[
\mu_{(n)}(L \otimes L) = L_{(n+\S)}L = \begin{cases}
T(L) \,, \;\; & n=0 \,, \\
2L \,, & n=1 \,, \\
c/2 \,, & n=3 \,, \\
0 \,, & n=2 \;\text{ or }\; n>3 \,,
\end{cases}
\]
which means that the $\lambda$-bracket $\{L_\lambda L\}$ is given by \eqref{eq5.0b}, 
as in the Virasoro LCA with central charge $c$.
However, there exist vectors $v$ such that $\N(L \otimes v) \ne 0$; for example,
$\N(L \otimes u) = -T(L) \otimes \vac$.
Hence, the modes $L_n := L_{(n+1+\S)}$ do not satisfy the commutation relations of the Virasoro Lie algebra.
An explicit calculation, using the Borcherds identity \eqref{borcherds2} with $n=0$ applied to $L \otimes L \otimes v$
for arbitrary $v\in V$, gives the relations ($m,k\in\ZZ$):
\begin{equation}\label{eq5.9}
\begin{split}
[&L_{m}, L_{k}] = (m-k)\sum_{j= 0}^\infty (j+1)L_{m+k-j}D^{j}\\
&+\delta_{m+k\ge0} \, (m - k)\bigl((m-k)^2 - m - k - 4\bigr) \binom{m+k + 3}{3} \frac{c}{96}D^{m+k}\, .
\end{split}
\end{equation}
In particular, these reduce to the Virasoro commutation relations when applied to the kernel of $D$.
Moreover, since $D$ is a derivation of $V$, we have:
\begin{equation}\label{eq5.10}
[D,L_n] = (DL)_{(n+1+\S)} = 0 \,, \qquad n\in\ZZ \,.
\end{equation}

\begin{remark}
The non-linear Lie algebra with relations \eqref{eq5.9}, \eqref{eq5.10} for $c=0$
can be realized in terms of vector fields on the circle as follows:
\[L_{n}=-\frac{t^{3+n}}{(t-D)^{2}} \, \frac{d}{dt} \,, \qquad n\in\ZZ\, , \]
where $D$ is central and we use the expansion
\[\frac{1}{(t-D)^{2}}=\sum_{j= 0}^\infty(j+1)t^{2-j}D^{j} \,.\]
Then the usual Gelfand--Fuchs cocycle on vector fields produces a cocycle equivalent to that in \eqref{eq5.9}.
\end{remark}

\section*{Declarations}

\subsection*{Funding}
The first author was supported in part by a Simons Foundation grant 584741. The second author was supported in part by UK Research and Innovation grant MR/S032657/1. 

\subsection*{Competing interests}
The authors have no competing interests to declare that are relevant to the content of this article.

\subsection*{Data availability statement}
Data sharing not applicable to this article as no datasets were generated or analysed during the current study.

\subsection*{Acknowledgments}
The second author thanks Marco Aldi for stimulating discussions on a related subject. He is grateful to Tomoyuki Arakawa and the organizers of the conference Vertex Algebras and Representation Theory at CIRM in June 2022, where some of the results of this work were announced. 
Both authors are grateful to Nikolay M. Nikolov for interesting discussions, and to the Institute for Nuclear Research and Nuclear Energy of the Bulgarian Academy of Sciences for the hospitality during June 2022.

\bibliographystyle{amsalpha}

\end{document}